\newcommand\codim{\operatorname{codim}}
\renewcommand{\AA}{{\mathbb{A}}}
\newcommand\spec{\operatorname{Spec}}
\newcommand\E{{E}}
\newcommand\G{\mathbb{G}}
\newcommand\ceq{c^{\G_m}}
\newcommand\p{\mathbb{P}}
\newcommand\Spec{\text{Spec}}
\newcommand\Proj{\text{Proj}}
\newcommand{\ra}{\rightarrow}
\newcommand{\Gan}{G_{\mathbf{a}, \mathbf{n}}}
\newcommand{\Gln}{\mathrm{GL}_\mathbf{n}}
\newcommand{\Uan}{U_{\mathbf{a}, \mathbf{n}}}
\newcommand\A{\mathbb{A}}
\newcommand\Z{\mathbb{Z}}
\newcommand{\srY}{\mathscr{Y}}
\newcommand{\srX}{\mathscr{X}}
\definecolor{VA}{RGB}{255,215,0}
\definecolor{SO}{RGB}{0,40,255}
\newtheorem*{Untheorem}{Theorem}
\newtheorem{theorem}{Theorem}[section]
\newtheorem{proposition}[theorem]{Proposition}
\newtheorem{lemma}[theorem]{Lemma}
\newtheorem{corollary}[theorem]{Corollary}
\theoremstyle{definition}
\newtheorem{definition}[theorem]{Definition}
\newtheorem{remark}[theorem]{Remark}
\newtheorem{example}[theorem]{Example}
\let\@wraptoccontribs\wraptoccontribs
\begin{document}

\title{The integral Chow ring of weighted blow-ups}
\author{Veronica Arena}
\address{Brown University\\
         Department of Mathematics\\
         Providence \\
         RI 02906}
\email{{Veronica\_Arena@brown.edu}}

\author{Stephen Obinna}
\address{Brown University\\
         Department of Mathematics\\
         Providence \\
         RI 02906}
\email{Stephen\_Obinna@brown.edu}
\contrib[with an appendix by]{Dan Abramovich, Veronica Arena, and Stephen Obinna}
\date{\today}
\keywords{weighted projective bundles, weighted affine bundles, Chow groups, Intersection theory, weighted blow-ups, algebraic stacks}
\thanks{This research is supported in part by funds from BSF grant 2018193 and NSF grant DMS-2100548}
\maketitle
\begin{abstract}
We give a formula for the Chow rings of weighted blow-ups. Along the way, we also compute the Chow rings of weighted projective stack bundles, a formula for the Gysin homomorphism of a weighted blow-up, and a generalization of the splitting principle. In addition, in the appendix we compute the Chern class of a weighted blow-up.
\end{abstract}

\section{Introduction}

Let $f:\tilde Y \ra Y$ be the weighted blow-up of $X \subset Y$ with positive weights $a_1, \dots a_d$ and let $\tilde X$ be the exceptional divisor. 
For most of the paper we will assume $X,\ Y$ algebraic spaces over a field of characteristic $0$. In Section $7$ we will generalize to the case of $\srX, \ \srY$ quotient stacks by a linear algebraic group. 

Then we have the commutative diagram 

$$\begin{tikzcd}
\tilde X \arrow[r, "j"] \arrow[d, "g"] & \tilde Y \arrow[d, "f"] \\
X \arrow[r, "i"] & Y
\end{tikzcd}$$
which is not Cartesian, unlike the ordinary blow-up case (an example of this can be found in \cite[Remark 3.2.10]{quek-rydh-weighted-blow-up}). 

In the case of a classical blow-up, a description of the ring $A^*(\tilde Y)$ and of its $A^*(Y)$-module structure is given in \cite[Exercise 8.3.9]{fulton} or \cite[Proposition 13.12]{eisenbud_harris}. 
The purpose of this paper is to give a similar description for the Chow ring of a weighted blow-up. 

The formula will follow from the exact sequence in the theorem below, generalizing the key sequence in \cite[Proposition 6.7(e)]{fulton}.

\begin{Untheorem}[\ref{sequence} Key sequence]
Let $X$, $Y$, $\tilde{X}$, $\tilde{Y}, f$ be as above, then we have the following exact sequence of Chow groups. 
$$A^*(X) \xrightarrow{(f^!, - i_*)} A^*(\tilde X) \oplus A^*(Y) \xrightarrow{j_*+f^*} A^*(\tilde Y) \ra 0.$$
Further, if we use rational coefficients, then this becomes a split short exact sequence with $g_*$ left inverse to $(f^!, -i_*)$.
$$0\to A^*(X,\mathbb{Q}) \xrightarrow{(f^!, -i_*)} A^*(\tilde X,\mathbb{Q}) \oplus A^*(Y,\mathbb{Q}) \xrightarrow{j_*+f^*} A^*(\tilde Y,\mathbb{Q}) \ra 0.$$
\end{Untheorem}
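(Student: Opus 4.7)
My plan is to model the proof on Fulton's proof of Proposition 6.7(e) for the classical blow-up, replacing the ordinary projective bundle $\PP(N_{X/Y})$ by the weighted projective stack bundle $g\colon \tilde X \to X$. The computation of the Chow rings of weighted projective stack bundles, established earlier in the paper, supplies the structural input; the remaining work is a diagram chase with localization sequences together with the key compatibility identity $j_* f^! = f^* i_*$.

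For surjectivity of $j_* + f^*$, I would combine the localization sequences
\[
A^*(\tilde X) \xrightarrow{j_*} A^*(\tilde Y) \to A^*(\tilde Y \setminus \tilde X) \to 0, \qquad A^*(X) \xrightarrow{i_*} A^*(Y) \to A^*(Y \setminus X) \to 0,
\]
with the fact that $f$ restricts to an isomorphism $\tilde Y \setminus \tilde X \cong Y \setminus X$: given $\eta \in A^*(\tilde Y)$, one lifts its restriction to $\tilde Y \setminus \tilde X$ to the restriction of some $\beta \in A^*(Y)$, so that $\eta - f^*\beta$ vanishes off $\tilde X$ and equals $j_*\alpha$ by localization. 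To check $(j_*+f^*)\circ(f^!,-i_*)=0$ I would then establish $j_* f^! = f^* i_*$ in $A^*(\tilde Y)$; I expect this to follow from the construction of $f^!$ for a weighted blow-up via a suitable weighted version of deformation to the normal cone and flat base change, as the direct analogue of Fulton's blow-up formula.

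The principal obstacle is exactness in the middle. If $j_*\alpha + f^*\beta = 0$, restricting to $\tilde Y \setminus \tilde X \cong Y \setminus X$ forces $\beta|_{Y\setminus X}=0$, so by localization $\beta = -i_*\gamma_0$ for some $\gamma_0\in A^*(X)$, and hence $\alpha - f^!\gamma_0 \in \ker j_*$. The delicate step is to show $\ker j_* \subseteq f^!(\ker i_*)$: correcting $\gamma_0$ by an element of $\ker i_*$ would then produce a single $\gamma$ with $(\alpha,\beta)=(f^!\gamma, -i_*\gamma)$. My plan is to use the weighted projective bundle formula to expand any class on $\tilde X$ as a combination of powers of the tautological class pulled back from $X$, apply the Cartier self-intersection $j^*j_* = c_1(\mathcal{O}(\tilde X))\cap -$ (i.e.\ multiplication by a rational multiple of the tautological class on $\tilde X$) to compute $\ker j_*$ explicitly, and identify the result with the image of $f^!$ on $\ker i_*$.

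Finally, for the splitting over $\mathbb{Q}$, I would verify the identity $g_*\circ f^! = \mathrm{id}_{A^*(X,\mathbb{Q})}$: this is integration along the fiber of the weighted projective bundle $g$, which becomes an isomorphism over $\mathbb{Q}$ once the product of the weights $a_1,\dots,a_d$ is inverted. Composing $g_*$ with the first projection $A^*(\tilde X,\mathbb{Q})\oplus A^*(Y,\mathbb{Q})\to A^*(\tilde X,\mathbb{Q})$ then provides the asserted left inverse to $(f^!,-i_*)$ and yields the split short exact sequence.
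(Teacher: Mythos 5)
Your surjectivity argument, the compatibility $j_*f^!=f^*i_*$, and the rational splitting via $g_*$ all agree with the paper. The genuine gap is in middle exactness, which is the hard point. You correctly isolate the needed claim $\ker j_*\subseteq f^!(\ker i_*)$, but the proposed tool --- the self-intersection formula $j^*j_*=c_1(\mathcal{O}_{\tilde Y}(\tilde X))|_{\tilde X}=-t\,\cdot$ combined with the weighted projective bundle formula --- cannot establish it. Since $j^*$ is far from injective, $j^*j_*\epsilon=0$ only tells you $\epsilon\in\ker(t\,\cdot)$, and the containment $\ker j_*\subseteq\ker(t\,\cdot)$ is strict in general; computing $\ker(t\,\cdot)$ in $A^*(X)[t]/(P(t))$ produces (at best) elements of the form $f^!(q)$ with $P(0)q=c_{top}(\mathcal{N}_XY)\,q=0$, i.e.\ $q\in\ker(i^*i_*)$, which is strictly larger than $\ker i_*$. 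Concretely, for $X=E\subset\PP^2$ with weight $2$ as in Example \ref{non exactness}, the class $2[pt]=f^!([pt])$ lies in $\ker(t\,\cdot)$ but not in $f^!(\ker i_*)$, so your computation cannot separate the classes you are allowed to absorb from those you must not. In the classical case Fulton closes this gap by first normalizing with $g_*$ (using $g_*\circ f^!=\mathrm{id}$ \emph{integrally}) and then showing $\ker j_*\cap\ker g_*=0$; that route is unavailable here because, as you note yourself, $g_*\circ f^!$ is the identity only after inverting $a_1\cdots a_d$ --- and indeed left exactness genuinely fails with integral coefficients.

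The paper supplies the missing input by extending both localization sequences one step to the left using higher Chow groups: with $U=Y\smallsetminus X\cong\tilde Y\smallsetminus\tilde X$ one has $\ker j_*=\operatorname{im}\bigl(\tilde\delta_1:A^*(U,1)\to A^*(\tilde X)\bigr)$ and $\ker i_*=\operatorname{im}\bigl(\delta_1:A^*(U,1)\to A^*(X)\bigr)$, and the compatibility $f^!\circ\delta_1=\tilde\delta_1$ then yields exactly $\ker j_*=f^!(\ker i_*)$ (packaged there as exactness of the total complex of a double complex). Some device of this kind is unavoidable: $\ker j_*$ is governed by rational equivalences on $\tilde Y$ that move cycles off $\tilde X$, which no intersection-theoretic operation internal to $\tilde X$ can detect. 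A secondary issue with your plan: in the weighted setting $A^*(\tilde X)=A^*(X)[t]/(P(t))$ is not a free $A^*(X)$-module on $1,t,\dots,t^{d-1}$ (the leading coefficient $a_1\cdots a_d$ of $P$ is not a unit), so the expansion of a class in powers of the tautological class requires all powers of $t$ and is not unique, further undermining the proposed computation of $\ker(t\,\cdot)$.
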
 

Note that since our blow-up diagram is not Cartesian, the codomain of $f^!$ is $A^*(X\times_Y \tilde{Y})$, but $\tilde{X}$ is the reduction of $X\times_Y \tilde{Y}$ so we can identify their Chow groups. 

 Moreover, when working with integral coefficients, the sequence is no longer exact on the left as shown in Example \ref{non exactness}.
Passing to rational coefficients however, allows us to maintain exactness on the left and to define a left inverse of $(f^!,-i_*)$ via $g_*$. In fact, it is enough to pass to $\Z[\frac{1}{a_1},\dots \frac{1}{a_d}]$-coefficients.

From the sequence, we can get the following description of $A^*(\tilde Y)$.

\begin{Untheorem}[\ref{main theorem 4} Chow ring of a weighted blow-up] If $\tilde{Y}\to Y$ is a weighted blowup of $Y$ at a closed subvariety $X$, then the Chow ring $A^*(\tilde{Y})$ is isomorphic \emph{as a group} to the quotient $$A^*(\tilde Y)\cong\frac{(A^*(X)[t])\cdot t \oplus A^*(Y)}{( (({P(t)-P(0)})\alpha,-i_*(\alpha)), \forall\alpha\in A^*(X))}$$ with $P(t)=c_{top}^{\G_m}(\mathcal{N}_XY)(t)$ and $[\tilde X]=-t$.

The multiplicative structure on $A^*(\tilde Y)$ is induced by the multiplicative structures on $A^*(X)$ and $ A^*(Y)$ and by the pullback map in the following way $$(0,\beta) \cdot (t,0)=(i^*(\beta)t,0).$$

Equivalently $A^*(\tilde Y)$ can be expressed as a quotient of the fiber product 
$$ \frac{A^*(Y)\times_{A^*(X)} A^*(X)[t]}{((i_*\alpha, P(t)\alpha) \ \forall \alpha \in A^*(X))} $$ with $i^*: A^*(Y) \ra A^*(X)$ and $A^*(X)[t] \ra A^*(X)$ given by evaluating $t$ at $0$.

\end{Untheorem}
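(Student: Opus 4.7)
The plan is to derive this presentation by combining Theorem \ref{sequence}, which identifies $A^*(\tilde Y)$ as the cokernel of $(f^!, -i_*)\colon A^*(X) \to A^*(\tilde X) \oplus A^*(Y)$, with the explicit formulas for $A^*(\tilde X)$ and $f^!$ established earlier in the paper. First, invoking the Chow ring formula for weighted projective stack bundles, I identify $A^*(\tilde X) \cong A^*(X)[t]/P(t)$, with $t$ corresponding to $j^*(-[\tilde X])$. Second, the Gysin formula for weighted blow-ups yields $f^!(\alpha) = g^*(\alpha) \cdot \frac{P(t) - P(0)}{t}$ in $A^*(\tilde X)$; the polynomial $\frac{P(t)-P(0)}{t}$ is well-defined since $P(t) - P(0)$ has vanishing constant term, and its class in $A^*(X)[t]/P(t)$ is the key object.

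To reformulate the cokernel in the stated form, I would use the surjection $\phi\colon A^*(X)[t]\cdot t \twoheadrightarrow A^*(\tilde X)$ given by $\phi(p(t)) = p(t)/t \pmod{P(t)}$, under which $(P(t) - P(0))\alpha$ maps to $f^!(\alpha)$. Composing with $j_* + f^*$ produces a surjection $A^*(X)[t]\cdot t \oplus A^*(Y) \twoheadrightarrow A^*(\tilde Y)$ under which the pairs $((P(t) - P(0))\alpha,\, -i_*\alpha)$ map to zero via the base-change identity $j_* f^! = f^* i_*$. The stated presentation then follows by identifying the kernel of this composition with the preimage under $(\phi, \mathrm{id})$ of the image of $(f^!, -i_*)$, which is generated by the claimed relations. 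For the multiplicative structure, the formula $(0,\beta)\cdot (t,0) = (i^*\beta \cdot t, 0)$ follows from the projection formula $f^*\beta \cdot j_*(\eta) = j_*(g^*i^*\beta \cdot \eta)$ applied with $\eta = 1 \in A^*(\tilde X)$, together with the compatibility $j^*f^* = g^*i^*$ from the commutative diagram.

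Finally, I would establish the equivalent fiber product description by constructing a comparison map sending $(\beta, q(t)) \in A^*(Y) \times_{A^*(X)} A^*(X)[t]$ (the fiber product over $i^*$ and evaluation at $t=0$) to $(q(t) - q(0), \beta) \in A^*(X)[t]\cdot t \oplus A^*(Y)$. The matching condition $i^*\beta = q(0)$ ensures that $q(t) - q(0) \in A^*(X)[t]\cdot t$ is well-defined, and the extra relations $(i_*\alpha, P(t)\alpha)$ translate to $((P(t)-P(0))\alpha, -i_*\alpha)$ under the comparison, using the decomposition $P(t) = (P(t) - P(0)) + P(0)$ and the identification of $P(0)$-contributions via $i^*$. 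The main obstacle I anticipate is the kernel identification in paragraph two: careful bookkeeping between $A^*(X)[t]$ and its quotient $A^*(X)[t]/P(t)$ is required to show that any relations introduced by division-by-$t$ followed by reduction modulo $P(t)$ are correctly absorbed by the stated generators, so that the final quotient matches $A^*(\tilde Y)$ rather than some larger group.
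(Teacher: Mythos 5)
Your proposal follows the same route as the paper's proof: start from the key sequence of Theorem \ref{sequence}, substitute the weighted projective bundle formula for $A^*(\tilde X)$ and the Gysin formula for $f^!$, replace $A^*(\tilde X)\cong A^*(X)[t]/P(t)$ by $(A^*(X)[t])\cdot t$ via division by $t$ (the paper phrases this as inserting a factor of $[\tilde X]$ and then setting $[\tilde X]=-t$), determine the mixed products by the projection formula together with $j^*f^*=g^*i^*$, and pass to the fiber-product description by the comparison map $(\beta,q(t))\mapsto (q(t)-q(0),\beta)$. All of that matches.

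The one genuine gap is precisely the step you flag at the end and leave open. The kernel of your surjection $\phi\colon (A^*(X)[t])\cdot t\ra A^*(X)[t]/P(t)$ is $t\,P(t)\,A^*(X)[t]$, so the presentation you obtain a priori carries the extra generators $(t\,P(t)\,q(t),\,0)$, and these do \emph{not} lie in the subgroup generated by the pairs $((P(t)-P(0))\alpha,\,-i_*\alpha)$ — compare degrees in $t$. They are absorbed only when the denominator is read as the \emph{ideal} generated by those pairs with respect to the multiplication you describe (this is why the theorem asserts only a group isomorphism onto a quotient whose denominator is defined using the ring structure). The paper closes this with a short computation: multiply the generator for $\alpha=1$ by $(t,0)$; since $(0,-i_*(1))\cdot(t,0)=(-i^*i_*(1)\,t,\,0)=(-c_{top}(\mathcal{N}_XY)\,t,\,0)=(-P(0)\,t,\,0)$ by the self-intersection formula, the product collapses to $(\pm P(t)\,t,\,0)$, and further multiplication by $(q(t)t,0)$ and by elements of $A^*(Y)$ sweeps out all of $t\,P(t)\,A^*(X)[t]$. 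One also checks, conversely, that multiplying a generator by an arbitrary element of the ring lands back in the subgroup generated by $t\,P(t)\,A^*(X)[t]$ and the stated pairs, so no new relations are introduced. With that computation supplied your argument is complete; without it the quotient you construct could be strictly larger than $A^*(\tilde Y)$.
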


In order to use the key sequence, we need to give a presentation for the Chow ring of the exceptional divisor $\tilde X$.
 
In the classical case, $\tilde X$ is a projective bundle over $X$ and the Chow ring of a projective bundle can be described via the formula \cite[Theorem 9.6]{eisenbud_harris}.
In the case of a weighted blow-up, the exceptional divisor is a projective stack bundle, i.e.,the projectivization of a weighted affine bundle (Definitions \ref{bundledef}, \ref{pbundledef}). In Section 3 we define the top $\G_m$-equivariant Chern class for a weighted affine bundle $E$ in terms of its homogeneous pieces as $$c^{\G_m}_{top}(E)=\prod c^{\G_m}_{top}(E_i)=\prod_{i}(c_{n_i}(E_i)+a_itc_{n_i-1}(E_i)+...+a_i^{n_i}t^{n_i})$$ 
and we give a formula for the integral Chow ring of a projective stack bundle (which was proven for rational coefficients in \cite[Lemma 2.10(b)]{mustata-mustata}).

\begin{Untheorem}[\ref{WeighedProjectiveBundleFormula} Weighted projective bundle formula] Let $\E$ be a weighted, affine bundle over $X$ of rank $n$. Let $c_{top}^{\G_m}(E)(t)$ be its $\G_m$-equivariant top Chern class.  
Then $$A^*(\mathcal{P}(\E)) \cong \frac{A^*(X)[t]}{c_{top}^{\G_m}(E)(t)}.$$
\end{Untheorem}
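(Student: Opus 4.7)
The plan is to identify $A^*(\mathcal{P}(\E))$ with a $\Gm$-equivariant Chow group and then apply excision. Writing $\mathcal{P}(\E)=[(\E\setminus 0)/\Gm]$ as a stack quotient (with $\Gm$ acting on the $i$-th summand by weight $a_i$), the Edidin--Graham definition gives $A^*(\mathcal{P}(\E))=A^*_{\Gm}(\E\setminus 0)$. I would feed this into the equivariant excision sequence for the zero section $i:X\hookrightarrow\E$ with open complement $\E\setminus 0$:
$$A^*_{\Gm}(X)\xrightarrow{i_*}A^*_{\Gm}(\E)\longrightarrow A^*_{\Gm}(\E\setminus 0)\longrightarrow 0.$$

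Two standard identifications will turn this into the desired formula. Since $\Gm$ acts trivially on $X$, $A^*_{\Gm}(X)=A^*(X)\otimes A^*(B\Gm)=A^*(X)[t]$, where $t=c_1^{\Gm}$ of the standard character. Since $\pi:\E\to X$ is a flat $\Gm$-equivariant vector bundle, $\pi^*:A^*_{\Gm}(X)\to A^*_{\Gm}(\E)$ is an isomorphism by equivariant homotopy invariance, and because $\pi\circ i=\mathrm{id}_X$ its inverse is $i^*$. The self-intersection formula then gives $i^*\circ i_*=\cup\,\ceq_{top}(N_{X/\E})=\cup\,\ceq_{top}(\E)$, so transporting $i_*$ back to $A^*(X)[t]$ via $\pi^*$ exhibits it as multiplication by $\ceq_{top}(\E)(t)$. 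The excision sequence thus collapses to
$$A^*(\mathcal{P}(\E))\cong\frac{A^*(X)[t]}{(\ceq_{top}(\E)(t))}.$$

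The last step is to verify that $\ceq_{top}(\E)(t)$ agrees with the polynomial $\prod_i\bigl(c_{n_i}(E_i)+a_it\,c_{n_i-1}(E_i)+\dots+a_i^{n_i}t^{n_i}\bigr)$ defined in Section 3. Whitney's formula applied to the $\Gm$-equivariant decomposition $\E=\bigoplus_i E_i$ reduces the claim to a single homogeneous piece; each $E_i$ is its underlying ordinary vector bundle twisted by the character $L_{a_i}$ of weight $a_i$, and the standard identity for Chern classes of a tensor product with a line bundle, combined with $c_1^{\Gm}(L_{a_i})=a_it$, produces each factor on the nose.

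The main obstacle is foundational rather than conceptual: one must verify that excision, equivariant homotopy invariance, and the self-intersection formula all hold for \emph{integral} $\Gm$-equivariant Chow groups of algebraic spaces. Each of these is available through the Edidin--Graham framework, but some care is needed to adapt the rational-coefficient argument of Mustata--Mustata to the integral setting. Once the machinery is in place, the remaining calculation is mechanical.
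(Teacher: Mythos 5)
Your overall strategy is exactly the paper's: the paper identifies $A^*(\mathcal{P}(\E))$ with $A^*_{\G_m}(E_0)$ and invokes Lemma \ref{lemma 2.2} (Molina-Rojas--Vistoli), whose proof is precisely your excision-plus-homotopy-invariance-plus-self-intersection argument, and then computes the top equivariant Chern class piece by piece using $c_1^{\G_m}(L^a)=c_1(L)+at$, just as you do via the tensor-with-a-character identity.

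There is, however, one genuine gap. You treat $\E$ as a $\G_m$-equivariant \emph{vector} bundle, writing $\E=\bigoplus_i E_i$ and $N_{X/\E}=\E$. But the theorem is about weighted \emph{affine} bundles, whose transition functions are graded but generally non-linear (a weight-$a_h$ coordinate may be sent to any homogeneous polynomial of degree $a_h$ in the lower-weight coordinates); these are exactly the objects that arise as weighted normal cones. For such a bundle there is no global direct-sum decomposition, only a filtration $\E\supset F_r\supset\dots\supset F_0=0$ with homogeneous quotients $E_i$ (Proposition \ref{Filtration2}), and the normal bundle of the zero section is the associated graded $E_1\oplus\dots\oplus E_r$, not $\E$ itself (Proposition \ref{weighted self intersect}). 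Your self-intersection step therefore yields multiplication by $c^{\G_m}_{top}(N_X\E)=c^{\G_m}_{top}(\bigoplus_i E_i)$, and one must separately observe (as the paper does in Definition \ref{affine Chern class}) that this is what $c^{\G_m}_{top}(\E)$ \emph{means} for an affine bundle; likewise homotopy invariance of $\pi^*$ must be quoted for affine, not just vector, bundles. None of this breaks the argument --- the conclusion and the polynomial $\prod_i\bigl(c_{n_i}(E_i)+a_it\,c_{n_i-1}(E_i)+\dots+a_i^{n_i}t^{n_i}\bigr)$ are unchanged --- but as written your proof only covers weighted vector bundles, and the reduction of the affine case to the associated graded is a step you need to supply.
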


Finally, to have a complete description of the exact sequence in Theorem \ref{sequence}, we need the appropriate generalization for the excess intersection formula \cite[Theorem 6.3]{fulton}. Unlike the case of an ordinary blow-up, in the weighted blow-up case we don't have an excess bundle and we describe $f^!$ as the multiplication by a difference quotient of the top $\G_m$ equivariant Chern class of the normal bundle.

\begin{Untheorem}[\ref{Gysin homomorphism for weighted blow-up} Weighted key formula]
Let $X$, $Y$, $\tilde{X}$, $\tilde{Y}, f$ be as above. Let us identify $A^*(\tilde X) \cong A^*(X)[t]/P(t)$ with $P(t)=c_{top}^{\G_m}(\mathcal{N}_XY)(t)$.
Then we have the following formula for the Gysin homomorphism $f^!:A^*(X) \ra A^*(\tilde X)$. $$
 f^!(\alpha)=\frac{P(t)-P(0)}{t}\alpha.$$
\end{Untheorem}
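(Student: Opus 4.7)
The plan is to reduce the computation to a local model via deformation to the normal cone, and then verify the formula directly using the explicit structure of the weighted blow-up of a weighted vector bundle. Deformation to the normal cone applied to the regular embedding $i\colon X\hookrightarrow Y$ yields a flat family over $\A^1$ whose generic fibre is $(X\subset Y)$ and whose special fibre is $(X\subset N:=\mathcal{N}_XY)$, with $X$ sitting as the zero section of $N$. The weighted blow-up construction extends compatibly over this family, interpolating between $\tilde Y$ and $\tilde N$, the weighted blow-up of $N$ at its zero section. Both sides of the claimed identity are preserved by this specialisation: the right-hand side depends only on $N$ and on $\alpha$, and $f^!$ commutes with the specialisation map for refined Gysin homomorphisms (as in Fulton, \S 6, adapted to the weighted stacky setting). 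It therefore suffices to establish the formula in the local case $Y=N$.

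In the local model, the weighted blow-up admits the explicit presentation $\tilde N=[(N\setminus 0)\times\A^1/\G_m]$, where $\G_m$ acts on $N=\bigoplus_i N_i$ with weight $a_i$ on $N_i$ and on the $\A^1$-factor with weight $-1$. The projection $q\colon \tilde N\to\mathcal{P}(N)=\tilde X$ exhibits $\tilde N$ as the total space of the tautological weighted line bundle $\mathcal{O}_{\mathcal{P}(N)}(-1)$, and $j\colon \tilde X\hookrightarrow \tilde N$ is its zero section. In particular $q\circ j=\mathrm{id}_{\tilde X}$, so $q^*\colon A^*(\tilde X)\to A^*(\tilde N)$ is an isomorphism with inverse $j^*$, and $[\tilde X]\in A^1(\tilde N)$ corresponds under $q^*$ to $-t\in A^1(\tilde X)$.

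With this structure in hand, I verify the formula through the relation $j_*(f^!\alpha)=f^*(i_*\alpha)$, which holds by exactness of the sequence of Theorem~\ref{sequence}. The self-intersection of the zero section of the vector bundle $\pi\colon N\to X$ gives $i_*\alpha=\pi^*\bigl(c_{\mathrm{top}}(N)\,\alpha\bigr)=\pi^*(P(0)\alpha)$, and since $\pi\circ f=g\circ q$, this pulls back to $f^*i_*\alpha=q^*g^*(P(0)\alpha)$, corresponding under $q^*$ to $P(0)\,g^*\alpha\in A^*(\tilde X)$. Meanwhile the projection formula combined with $q\circ j=\mathrm{id}$ yields $j_*\eta=[\tilde X]\cdot q^*\eta=-t\cdot q^*\eta$ for $\eta\in A^*(\tilde X)$, i.e.\ multiplication by $-t$ under the identification. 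The defining relation thus becomes, in $A^*(X)[t]/P(t)$,
\[
-t\cdot f^!\alpha\ \equiv\ P(0)\,g^*\alpha\pmod{P(t)},
\]
and the candidate $\frac{P(t)-P(0)}{t}g^*\alpha$ manifestly satisfies this, since $-t\cdot\frac{P(t)-P(0)}{t}g^*\alpha=(P(0)-P(t))g^*\alpha\equiv P(0)\,g^*\alpha$. The candidate has $t$-degree $<d=\sum_i n_i$, and multiplication by $t$ is injective on this truncated subspace whenever the constant term $P(0)=c_{\mathrm{top}}(N)$ is a non-zero-divisor (the standing hypothesis under which the weighted projective bundle formula is stated), which pins down $f^!\alpha$ uniquely and completes the local verification.

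The chief technical hurdle will be the reduction of the first paragraph: one must arrange deformation to the normal cone so that the weighted blow-up extends as a flat family, and verify that $f^!$ commutes with the resulting specialisation in the algebraic-stack setting. Once this is secured, the local computation is essentially formal, depending only on the projection formula and on the self-intersection of the zero section of the normal bundle.
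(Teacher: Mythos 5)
Your overall strategy (deformation to the weighted normal cone, then verification of the identity $j_*f^!\alpha=f^*i_*\alpha$ with $j_*$ acting as multiplication by $-t$ and $i_*$ as multiplication by $P(0)$) matches the first half of the paper's argument, and the reduction to $Y=\mathcal{N}_XY$ is Theorem \ref{specialization}. But the final uniqueness step has a genuine gap. You deduce $f^!\alpha$ from the single relation $-t\cdot f^!\alpha\equiv P(0)\,g^*\alpha \pmod{P(t)}$, and to pin down the solution you assume that $P(0)=c_{\mathrm{top}}(\mathcal{N}_XY)$ is a non-zero-divisor, calling this ``the standing hypothesis under which the weighted projective bundle formula is stated.'' No such hypothesis appears anywhere in the paper: Theorem \ref{WeighedProjectiveBundleFormula} and Theorem \ref{Gysin homomorphism for weighted blow-up} are stated for arbitrary weighted blow-ups, and in the most basic examples the hypothesis fails outright. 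For instance, for the weighted blow-up of a point in a surface with weights $a_1,a_2$ one has $P(t)=a_1a_2t^2$ and $P(0)=0$; in $\mathbb{Z}[t]/(a_1a_2t^2)$ the relation $-t\cdot\gamma\equiv 0$ is satisfied both by $\gamma=0$ and by the correct answer $\gamma=a_1a_2t\neq 0$, so your relation does not determine $f^!(1)$ at all. More generally $c_{\mathrm{top}}(\mathcal{N}_XY)$ vanishes whenever $\operatorname{codim}(X\subset Y)$ exceeds the dimension of $X$, which is exactly the regime of many applications (e.g.\ the $\bar{\mathcal{M}}_{1,2}$ computation in the paper).

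The paper closes this gap with two ingredients you omit. First, Lemma \ref{structure of the map} shows a priori that $f^!(\alpha)=g^*(\alpha)\cdot\gamma$ for a single class $\gamma$, so it suffices to identify $f^!(1)$, and this identification can be transported along pullbacks. Second, and crucially, the splitting principle (Theorem \ref{splitting principle}) reduces to the universal case of $[Bl_{a_1,\dots,a_d}\A^d/T]\to[\A^d/T]$, where $A^*_T(\mathcal{P}(a_1,\dots,a_d))\cong\mathbb{Z}[x_1,\dots,x_d,t]/\bigl(\prod_i(x_i+a_it)\bigr)$ and $t$ genuinely is a non-zero-divisor (and $P(0)=x_1\cdots x_d\neq 0$); there the relation $f^!(1)\cdot(-t)=P(0)$ does determine $f^!(1)$, and the answer is then pulled back via the injection $A^*(X)\hookrightarrow A^*(X'')$. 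If you want to salvage your local verification, you must insert this passage to the universal $T$-equivariant model (or an equivalent device) before invoking cancellation of $t$; as written, the argument proves the formula only under a hypothesis that excludes the main cases of interest.
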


The proof of our formula for the Gysin homomorphism relies on a generalization of the splitting principle, Theorem \ref{splitting principle}

\begin{Untheorem}[\ref{splitting principle} The splitting principle]Let $T$ be the standard maximal torus in $\Gln$. Then the map $X'' \ra X$ in the following diagram $$\begin{tikzcd}
X'' \arrow[r] \arrow[d] & BT \arrow[d]\\
X' \arrow[r] \arrow[d] & B\Gln \arrow[d] \\
X \arrow[r] & B\Gan
\end{tikzcd}$$ induces an injection of Chow rings $A^*(X) \hookrightarrow A^*(X'')$ via pullback. 
\end{Untheorem}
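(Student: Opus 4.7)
The plan is to split the vertical composition $X'' \to X$ into the two stages $X'' \to X' \to X$ corresponding to the inclusions $T \hookrightarrow \Gln \hookrightarrow \Gan$ of structure groups, handle each stage separately, and compose the resulting maps on Chow rings. Both squares in the diagram are Cartesian, so $X' \to X$ is the base change of $B\Gln \to B\Gan$ along the classifying map of the weighted affine bundle $\E$, and $X'' \to X'$ is the base change of $BT \to B\Gln$.

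For the lower stage $X' \to X$, the key observation is that $\Gan$ fits into an exact sequence
$$1 \to \Uan \to \Gan \to \Gln \to 1,$$
where $\Uan$ is the unipotent subgroup recording the strictly weight-lowering morphisms between the homogeneous pieces of $\E$ that distinguish a weighted affine bundle from its associated graded vector bundle. As a scheme, $\Gan/\Gln \cong \Uan$ is an affine space, so the fiber of $B\Gln \to B\Gan$ is $\Uan$, and $X' \to X$ is a $\Uan$-torsor, hence an iterated affine bundle. By homotopy invariance of equivariant Chow groups (in the Edidin--Graham sense used throughout the paper), pullback yields an isomorphism $A^*(X) \xrightarrow{\sim} A^*(X')$.

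For the upper stage $X'' \to X'$, this is the classical splitting principle applied relatively: the fiber of $BT \to B\Gln$ is $\Gln/T = \prod_i \mathrm{GL}_{n_i}/T_i$, a product of full flag varieties, so $X'' \to X'$ is the associated flag bundle of the underlying graded vector bundle over $X'$. By iterated application of the projective bundle formula, $A^*(X'')$ is a free $A^*(X')$-module with basis given by (products of) Schubert classes, so pullback $A^*(X') \hookrightarrow A^*(X'')$ is injective.

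Composing, the pullback along $X'' \to X$ is an isomorphism followed by an injection, hence injective. The main content lies in the lower stage: identifying $\Gan/\Gln$ with the unipotent affine group $\Uan$ and invoking homotopy invariance in the equivariant/stack setting. The upper stage is classical and should require no new input beyond the projective bundle formula.
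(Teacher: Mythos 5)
Your proposal is correct and follows essentially the same route as the paper: the lower stage is handled by identifying $X'\to X$ as a $\Uan$-bundle via the semidirect product decomposition $\Gan \cong \Gln\ltimes\Uan$ (the paper's Lemma \ref{decompositionofGan} and Corollary \ref{Uanbundle}) and invoking homotopy invariance to get $A^*(X)\cong A^*(X')$ (Lemma \ref{isomorphismofChowrings}), while the upper stage is the classical splitting principle for the flag bundle $X''\to X'$, which the paper cites from Totaro. No gaps; the composition of an isomorphism with an injection gives the claim exactly as in the paper's proof.
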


Here $\Gan$ is the structure group of the weighted affine bundle $E$ and $\Gln$ is the structure group of its associated weighted vector bundle.
Note that the upper square of the diagram is equivalent to the classical splitting principle in \cite{fulton} or \cite{totaro}.

\subsection{Acknowledgements}

We would like to thank to Dan Abramovich for his invaluable help and guidance, as well as Jarod Alper, Paolo Aluffi, Martin Bishop,  Samir Canning, Andrea Di Lorenzo, Giovanni Inchiostro, Patrick Jefferson, Michele Pernice and Ming Hao Quek for insightful conversations.

\setcounter{tocdepth}{1}
\tableofcontents

\section{Equivariant Intersection Theory}

From now on $Y,X$ will be smooth quasi-separated algebraic spaces, of finite type over a field $k$ of characteristic 0, with a $\G_m$ action. We will also assume the $\G_m$ action is trivial on $X$.

Let us first recall the definitions in \cite{EG} of equivariant Chow groups, for a linear algebraic group $G$. 

\begin{definition} \label{definition of G-equivariant Chow groups} \cite[Definition-Proposition 1]{EG}
Let $Y$ be a $d$-dimensional quasi-separated algebraic space of finite type over a field $k$, together with a $G$ action. Let $g$ the dimension of $G$. 
The $i$-$th$ $G$-equivariant Chow group of $Y$ is defined as $$A_i^{G}(Y):=A_{i+l-g}(Y \times U/G)$$ where $U$ is an open subspace of an $l$-dimensional representation, on which $G$ acts freely and whose complement has codimension greater than $d-i$. 
\end{definition}

In this article we will mostly use this definition in the particular case of a $\G_m$ action. In particular, this leaves us with very convenient choices for representations: $V$ will be an $l$-dimensional vector space with the standard $\G_m$ action with weight $1$ and $U= V \smallsetminus \{0\}$.

\begin{example} \label{id}
$A^*_{\G_m}(X)\cong A^*(X)[t]$. 

Indeed, since the $\G_m$ action is trivial on $X$
\begin{align*}A^i_{\G_m}(X)&=A_{d-i}^{\G_m}(X)=A_{d-i+l-1}\left({X \times U}/{\G_m}\right)=A^i(X \times \p^{l-1})\\&=\bigoplus_{k=0}^iA^k(X)t^{i-k}\end{align*} with $t=c_1(\mathcal{O}_{\p^{l-1}}(1))$ and the isomorphism follows. 
\end{example}

\begin{definition}\cite[Definition 1]{EG}
Let $\E$ be a $G$ equivariant vector bundle over $Y$. The equivariant Chern classes of $E$ are the operators $c_j^{G}: A_i^{G}(Y) \to A^{G}_{i-j}(Y) $  with $c^{G}_j(\E) \cap \alpha = c_j\left( {\E \times U}/{G}\right) \cap \alpha \in A_{i+l-j-g}(Y \times U /G)=A^{G}_{i-j}(Y)$.
\end{definition}

As Molina-Rojas and Vistoli remark in \cite[Section 2]{MV}
many of the standard properties of Chow groups still hold in the equivariant case. Below we collect some that will be used later.
\begin{proposition} \label{remarks}
The following are true.

\begin{enumerate}
\item The first Chern class of the tensor product of line bundles is the sum of the first Chern classes of each line bundle.

\item Let E be $G$-equivariant a vector bundle over $Y$ and $f:Y' \to Y$ a map such that $f^*\E$ has a filtration of $G$-equivariant vector bundles $f^*\E \supset F_r \supset ... \supset F_0=0$. Let $E_i=F_{i}/F_{i-1}$.
Then $$c^G(f^*\E)=\prod_i c^G(\E_i).$$

\item If $Z$ is a closed $G$-invariant subscheme of $Y$, we have the following exact sequence $$A^*_{G}(Z) \to A^*_{G}(Y) \to A^*_{G}(Y \smallsetminus Z) \to 0.$$ 

\item Let $\pi: E\ra Y$ be a vector bundle over $Y$, $s_0: Y \to E$ the zero section. Then the Gysin pullback map $s_0^*: A^*_{G}(E) \to A^*_{G}(Y)$ is an isomorphism inverse of $\pi^*$.

\end{enumerate}
\end{proposition}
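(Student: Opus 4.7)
The plan is to deduce each of the four items from its well-known non-equivariant counterpart applied to the mixed quotient $Y_G := (Y \times U)/G$ appearing in Definition~\ref{definition of G-equivariant Chow groups}. The unifying observation is that a $G$-equivariant vector bundle $\E$ on $Y$ descends to an ordinary vector bundle $\E_G := (\E \times U)/G$ on $Y_G$, and this assignment commutes with tensor products, direct sums, short exact sequences, pullbacks along $G$-equivariant maps, and zero sections. Throughout the argument one is free to enlarge $l$ so that the complement of $U$ in its ambient representation has codimension larger than any fixed quantity that enters the statement; Edidin--Graham verify in \cite{EG} that $A^G_*(Y)$ stabilizes in this range, so it is harmless to pass to the stable regime before invoking the ordinary statements.

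For (1), if $L$ and $M$ are $G$-equivariant line bundles, then $(L \otimes M)_G = L_G \otimes M_G$, and the classical identity $c_1(L_G \otimes M_G) = c_1(L_G) + c_1(M_G)$ on $Y_G$ is exactly the desired equivariant equality once rewritten via the definition of $c_1^G$. For (2), a filtration $f^*\E \supset F_r \supset \cdots \supset F_0 = 0$ by $G$-equivariant subbundles on $Y'$ descends to an honest filtration of vector bundles on $Y'_G$ with successive quotients $(\E_i)_G$, so the standard consequence of the Whitney sum formula (\cite[Example 3.2.2]{fulton}) gives $c\bigl((f^*\E)_G\bigr) = \prod_i c\bigl((\E_i)_G\bigr)$, which unwinds to the claim.

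For (3), a $G$-invariant closed subspace $Z \subset Y$ yields a closed immersion $Z_G \hookrightarrow Y_G$ with open complement $(Y \setminus Z)_G$, and the standard right-exact localization sequence of \cite[Proposition~1.8]{fulton} on these spaces translates, by Definition~\ref{definition of G-equivariant Chow groups}, directly into the asserted equivariant sequence. For (4), the zero section $s_0 : Y \to \E$ descends to the zero section of $\E_G \to Y_G$, which is an ordinary vector bundle, so the classical theorem that $\pi^*$ is an isomorphism with inverse $s_0^*$ (\cite[Theorem~3.3]{fulton}) gives the equivariant statement after passing through the definitions.

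The only real care required is the dimension bookkeeping in Definition~\ref{definition of G-equivariant Chow groups}: each non-equivariant identity must be checked in a degree range where the shift $+l-g$ is compatible on both sides and all finitely many intervening classes are stable in $l$. This is handled uniformly by taking $l$ large enough at the outset; hence no genuine obstacle arises, and the proposition becomes a routine translation of standard facts through the Totaro--Edidin--Graham mixed-space construction.
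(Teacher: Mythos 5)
Your proposal is correct and follows essentially the same strategy as the paper: pass to the mixed space $(Y\times U)/G$ for $U$ of sufficiently large dimension and invoke the corresponding classical statement there. The paper writes this out only for item (3) and declares the others analogous, so your argument is if anything slightly more complete in spelling out the descent of bundles, filtrations, and zero sections for the remaining items.
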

\begin{proof}
We will prove $(3)$. The proof of the remaining ones is analogous.
Let $U$ have dimension $l$ high enough that $A_i^{G}(Y)$ is defined as $A_{i+l-g}(Y \times U/ G)$. 
Then, also by definition, we have $A_i^{G}(Z):=A_{i+l-1}(Z \times U/G)$. In particular, $Z \times U/G$ and $Y \times U/G$ are algebraic spaces, and the localization sequence $$A^*
(Z \times U /G) \ra A^*(Y \times U/G ) \ra A^*(Y\times U/G \smallsetminus Z \times U /G) \ra 0$$ is exact.
Therefore statement $(3)$ holds as well. 
\end{proof}

Another proposition, that will be very useful later, is \cite[Lemma 2.2]{MV}, for which we will quote the statement and the proof.

\begin{lemma}\cite[Lemma 2.2]{MV} \label{lemma 2.2}
Let $G$ be an affine linear group acting on a smooth scheme $Y$. Let $\pi:E \ra Y$ be a $G$-equivariant vector bundle of rank $n$. Call $E_0 \subset E$ the complement of the zero section $s: Y \ra E$. Then the pullback homomorphism $\pi|_{E_0}^*:A^*_G(Y) \ra A^*_G(E_0)$ is surjective, and its kernel is generated by the top Chern class $c_n^G(E) \in A^n_G(Y)$.
\end{lemma}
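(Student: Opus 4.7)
The plan is to combine the localization sequence of Proposition~\ref{remarks}(3) with the homotopy invariance of Proposition~\ref{remarks}(4), and then to close the loop using the self-intersection formula for the zero section of a vector bundle. Applied to the closed immersion $s: Y \hookrightarrow E$ with open complement $j: E_0 \hookrightarrow E$, the localization sequence reads
$$A^*_G(Y) \xrightarrow{s_*} A^*_G(E) \xrightarrow{j^*} A^*_G(E_0) \to 0.$$
By Proposition~\ref{remarks}(4), the map $\pi^*$ is an isomorphism with inverse $s^*$. Since $\pi|_{E_0} = \pi \circ j$, we have $\pi|_{E_0}^* = j^* \circ \pi^*$, which is surjective because $j^*$ is. Its kernel equals $(\pi^*)^{-1}(\ker j^*) = s^*(\operatorname{im} s_*) \subseteq A^*_G(Y)$.

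To identify this image, I would invoke the self-intersection formula $s^* s_* \alpha = c_n^G(E) \cdot \alpha$, which immediately yields $s^*(\operatorname{im} s_*) = c_n^G(E) \cdot A^*_G(Y)$, and hence the claimed description of $\ker \pi|_{E_0}^*$.

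The main obstacle is justifying the self-intersection formula in the equivariant setting. My plan is to reduce it to the classical statement via the Totaro/Edidin--Graham approximation: for a sufficiently generic representation $V$ with open subset $U \subset V$ on which $G$ acts freely and with small-codimensional complement, the mixed-space construction turns $\pi: E \to Y$ into a genuine vector bundle $E \times^G U \to Y \times^G U$ of the same rank $n$, whose zero section has as its normal bundle the mixed-space quotient of $E$ itself, and whose ordinary top Chern class is by definition $c_n^G(E)$. The classical self-intersection formula for this approximating bundle, together with the fact that the relevant pieces of the equivariant Chow groups are computed by a single such approximation (choosing $l = \dim U$ large enough), yields the equivariant identity in $A^*_G(Y)$. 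With this ingredient in hand the short diagram chase above closes the proof.
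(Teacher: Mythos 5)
Your proposal is correct and follows essentially the same route as the paper: the localization sequence for $s: Y \hookrightarrow E$, the isomorphism $s^* = (\pi^*)^{-1}$, and the self-intersection formula identifying $s^*s_*$ with multiplication by $c_n^G(E)$. The paper simply cites the self-intersection formula where you spell out its reduction to the classical case via the Edidin--Graham mixed-space approximation, which is a reasonable (and correct) elaboration of the same argument.
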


\begin{proof}
Consider the diagram:
$$\begin{tikzcd}
&A^*_G(Y)\arrow[dr,"\pi|_{E_0}^*"]\\
A^*_G(Y) \arrow[ur]\arrow[r,"s_*"]& A^*_G(E) \arrow[u,"s^*"]\arrow[r]& A^*_G(E_0)\arrow[r]&0,
\end{tikzcd}$$
where the bottom is the localization sequence. Since $s^*$ is an isomorphism, inverse to $\pi^*$, we see that $\pi|_{E_0}^*$ is surjective with kernel generated by the image of $s^*s_*$. By self-intersection formula, $s^*s_*$ is multiplication by $c_n^G(E)$.
\end{proof}

\section{Chow groups of weighted projective stack bundles}

In this section we will give formula for the Chow ring of a weighted projective stack bundle. Weighted projective stack bundles appear as the exceptional divisor of a weighted blow-up. We start by computing the Chern classes of weighted affine bundles, and then show we can apply Lemma \ref{lemma 2.2} to them. A similar formula for rational coefficients appears in  \cite[Theorem 2.10 (b)]{mustata-mustata}.

\begin{definition}
An affine bundle is a smooth affine morphism $E\to X$ such that $E$ is, locally in the smooth topology, isomorphic to $X\times \mathbb{A}^n$.
\end{definition}
\begin{definition}\cite[Definition 2.1.3]{quek-rydh-weighted-blow-up}\label{bundledef}
A weighted affine bundle is a $\G_m$ equivariant affine bundle $E \ra X$ where locally in the smooth topology $\G_m$ acts linearly on $\mathbb{A}^n$ with positive weights $a_1,...,a_n \in \mathbb{Z}$. 
\end{definition}

We will show in Lemma \ref{decompositionofGan} that the structure group is special, so weighted affine bundles over a scheme will be Zariski-locally-trivial.

It will sometimes be convenient to emphasize the \emph{distinct} weights of a weighted affine bundle. When we do this we will list the distinct weights as $a_1,\dots,a_r$, and use $n_i$ to refer to the dimension of the subspace of $\mathbb{A}^n$ where the action has weight $a_i$.

\begin{definition}
    A weighted vector bundle is a weighted affine bundle whose underlying $\G_m$ space is a vector bundle. Equivalently, it is a weighted affine bundle with linear transition functions.
\end{definition}

Notice that our terminology is slightly different from that of \cite{quek-rydh-weighted-blow-up}. What they call twisted/untwisted weighted vector bundles, we call weighted affine/vector bundles respectively. Also note that the action on a weighted vector bundle makes it a sum of vector bundles with homogeneous actions.

\begin{definition}\cite[Definition 2.1.5]{quek-rydh-weighted-blow-up}\label{pbundledef}
A weighted projective stack bundle over $X$ is the stack-theoretic Proj of a graded algebra corresponding to a  weighted affine bundle with strictly positive weights. Precisely, if $R$ is a graded algebra such that $E = \Spec _X(R)$ then $\Proj_X(R)=[\Spec_X(R)\smallsetminus V(R_+)/\G_m]$.
\end{definition}

\subsection{Equivariant Chern classes of a weighted line bundle}

Let us denote with $L \to X$ a line bundle over $X$ with the trivial action. Let us denote with $L^a$ the same underlying line bundle, endowed with the weight $a$ $\G_m$ action. This is a notation we will adopt only for subsections $3.1$ and $3.2$, but abandon later as the weight of the $\G_m$ action will be clear from context.

Some of the following lemmas are likely already known, but are stated and proven for completeness as we couldn't find specific references.

\begin{lemma} \label{eclb}
Let $L^a$ be a $\G_m$ equivariant line bundle over $X$ with weight $a$, then the first equivariant Chern class of $L^a$ is $\ceq_1(L^a)=c_1(L)+at$ via the identification in Example
\ref{id}.
 \end{lemma}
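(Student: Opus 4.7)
The plan is to unwind the Edidin--Graham definition of equivariant Chern classes directly. Since the $\G_m$-action on $X$ is trivial, the mixing space is $X \times U/\G_m = X \times \p^{l-1}$, and by the definition of equivariant Chern classes, $\ceq_1(L^a) \in A^1_{\G_m}(X)$ equals the ordinary first Chern class $c_1(L^a \times^{\G_m} U) \in A^1(X \times \p^{l-1})$. So the problem reduces to identifying, as a concrete honest line bundle on $X \times \p^{l-1}$, the one produced by the mixing construction applied to $L^a$.

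The next step is to factor $L^a$ as a tensor product of $\G_m$-equivariant line bundles on $X$: namely $L^a \cong L \otimes \chi_a$, where $L$ carries the trivial (weight $0$) linearization and $\chi_a$ denotes the pullback to $X$ of the one-dimensional weight-$a$ representation of $\G_m$. Since the mixing construction commutes with tensor products and pullbacks, this gives
$$L^a \times^{\G_m} U \;\cong\; p_1^* L \;\otimes\; p_2^* \mathcal{O}_{\p^{l-1}}(a),$$
where $p_1, p_2$ are the projections from $X \times \p^{l-1}$. I am invoking here the standard identification of the weight-$a$ character with $\mathcal{O}(a)$ on $\p^{l-1} = U/\G_m$; this can be checked directly by observing that sections of $\chi_a \times^{\G_m} U$ correspond to weight-$a$ equivariant functions $U \to \A^1$, i.e., degree-$a$ homogeneous forms on $V$.

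To conclude, I apply Proposition \ref{remarks}(1) (additivity of the first Chern class on tensor products of line bundles), obtaining
$$c_1(L^a \times^{\G_m} U) = c_1(p_1^* L) + c_1(p_2^* \mathcal{O}_{\p^{l-1}}(a)) = c_1(L) + at,$$
where the last equality uses the identification of Example \ref{id}. The only real subtlety is pinning down the normalization of the $\chi_a \leftrightarrow \mathcal{O}(a)$ correspondence so that it is compatible with the convention $t = c_1(\mathcal{O}_{\p^{l-1}}(1))$ fixed in Example \ref{id}; once that is in place, the rest is formal.
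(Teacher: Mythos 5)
Your proposal is correct and follows essentially the same route as the paper: both factor $L^a$ as $L$ with trivial linearization tensored with the weight-$a$ trivial line bundle, apply additivity of $c_1$ under tensor product, and identify the mixing-space bundle of the weight-$a$ character with $\mathcal{O}_{\p^{l-1}}(a)$ (the paper does this by reducing to a point and exhibiting an explicit section of the weight-$1$ case, while you identify equivariant sections with degree-$a$ homogeneous forms — the same normalization check in slightly different clothing).
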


 \begin{proof}
 We know that 
   $L^a=L^a \otimes \mathcal{O}_X = L \otimes {\mathcal{O}_X}^a$.
   
   In particular, $$c_1^{\G_m}(L^a)=c_1^{\G_m}(L \otimes \mathcal{O}_X)=c_1^{\G_m}(L)+ac_1^{\G_m}({\mathcal{O}_X}^1).$$
   Now, since the action on $L$ is trivial,  $(L \times U )/\G_m=L \times \p^{l-1}$ and since $A^1(X \times \p^{l-1})=A^1(X) \oplus A^0(X)t$, we get $$c_1^{\G_m}(L)=c_1(L \times \p^{l-1})=c_1(L) \in A^1(X). $$ We only need to prove $c_1({\mathcal{O}_X}^1)=t$.
   
   Let us consider the projection to a point $P$, $f:X \to P$. The map defines a graded ring homomorphism $f^*: A^*_{\G_m}(P) \ra A^*_{\G_m}(X)$, i.e.,a map $f^*: \mathbb{Z}[t] \ra A^*(X)[t]$ defined by $1 \mapsto 1$ and $t \mapsto t$.
   
Now ${\mathcal{O}_X}^1=f^*({\mathcal{O}_P}^1)$ and $$c_1^{\G_m}({\mathcal{O}_X}^1)=c_1^{\G_m}(f^*({\mathcal{O}_P}^1))=f^*c_1^{\G_m}({\mathcal{O}_P}^1).$$ Therefore it is enough to prove $c_1^{\G_m}({\mathcal{O}_P}^1)=t$. 

By definition $c_1^{\G_m}({\mathcal{O}_P}^1)=c_1(\mathcal{O}_P \times U/ \G_m)$ as a bundle over $U/ \G_m$, with $U / \G_m=\mathbb{A}^2 \smallsetminus (0,0)/ \G_m =\p^1$. 

Now, a nonzero section $s: U \ra U \times\mathcal{O}_P / \G_m$ is given by $(x_0, x_1) \mapsto (x_0, x_1, x_0)$ and intersects the zero section $(x_0,x_1,0)$ in $x_0=0$, which gives us $\mathcal{O}_{\p^1}(1)$, whose first Chern class is $t$ in $A^1(U \times\mathcal{O}_P / \G_m)=A^1(\p^1)$, as desired. 
 \end{proof}

 \subsection{Equivariant Chern classes of homogeneous bundles}

 \begin{proposition}[{Homogeneous bundles admit a splitting with line bundles}]\label{filtration}
Let $\E^a$ be a rank $n$ vector bundle over $X$ with $\G_m$ acting homogeneously with weight $a$ on it. Then there exists $f:X' \to X$ such that $f^*\E^a$ has a filtration $$f^*\E^a \supset F^a_{n} \supset ... \supset F^a_0=0$$ with $\G_m$-equivariant line bundle quotients $L^a_j=F^a_{j+1}/F^a_{j}$ and $f^*$ is injective. 
\end{proposition}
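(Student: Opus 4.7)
The plan is to take $f: X' \to X$ to be the complete flag bundle of $\mathcal{E}^a$, built iteratively as a tower of projective bundles. First I would set $X_1 = \mathbb{P}(\mathcal{E}^a) \xrightarrow{p_1} X$ with the tautological line sub-bundle $F_1^a \subset p_1^*\mathcal{E}^a$; then $X_2 = \mathbb{P}(p_1^*\mathcal{E}^a/F_1^a) \to X_1$, with the preimage of the tautological sub-bundle producing a rank-two sub-bundle $F_2^a \subset p_2^*p_1^*\mathcal{E}^a$; and so on. After $n-1$ stages this yields $X' \to X$ equipped with a full flag $0 = F_0^a \subset F_1^a \subset \dots \subset F_n^a = f^*\mathcal{E}^a$ and line bundle quotients $L_j^a = F_{j}^a / F_{j-1}^a$.

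Next I would check $\mathbb{G}_m$-equivariance. Because $\mathbb{G}_m$ acts homogeneously with weight $a$ on $\mathcal{E}^a$, scalar multiplication preserves every linear subspace of every fiber; consequently the induced action on $\mathbb{P}(\mathcal{E}^a)$ is trivial (and the action on $X$ is trivial by the standing hypothesis). Hence $X_1$, together with the tautological line sub-bundle $F_1^a$, are naturally $\mathbb{G}_m$-equivariant, with $F_1^a$ inheriting the weight $a$ action. Iterating this reasoning, every $X_i$ has trivial $\mathbb{G}_m$-action, every $F_j^a$ is $\mathbb{G}_m$-invariant, and every quotient $L_j^a$ is a line bundle with weight $a$ action, which gives the desired filtration.

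For the injectivity of $f^*$, since the $\mathbb{G}_m$-action is trivial on both $X$ and $X'$, Example \ref{id} identifies $A^*_{\mathbb{G}_m}(X) = A^*(X)[t]$ and $A^*_{\mathbb{G}_m}(X') = A^*(X')[t]$, and the equivariant pullback is just the non-equivariant pullback extended by $t \mapsto t$. It therefore suffices to show the ordinary pullback $A^*(X) \hookrightarrow A^*(X')$ is injective. This follows by iterating the classical projective bundle formula along the tower $X' = X_{n-1} \to X_{n-2} \to \dots \to X_1 \to X$: at each stage $A^*(X_i)$ is a free $A^*(X_{i-1})$-module, so each pullback is injective, and composites of injections are injective.

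The construction is essentially the classical splitting principle, and the only content beyond the non-equivariant case is the observation that the homogeneity of the weight $a$ action makes the tower of projective bundles automatically equivariant with trivial base action, so that nothing new needs to be checked on the Chow-theoretic side. I do not expect any serious obstacle; the mild subtlety lies only in keeping track that the weight $a$ action descends to each quotient line bundle, which is immediate since scalar multiplication on $\mathcal{E}^a$ restricts to scalar multiplication on every invariant sub-bundle and quotient.
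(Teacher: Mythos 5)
Your proposal is correct and takes essentially the same route as the paper: both reduce to the classical splitting construction applied to the underlying bundle and observe that, because the weight-$a$ action is homogeneous, the resulting flag, its line-bundle quotients, and the injectivity of $f^*$ are automatically $\mathbb{G}_m$-equivariant with no extra work. You simply unpack the flag-bundle tower and the projective bundle formula that the paper cites from Fulton.
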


\begin{proof}
Let us consider the underlying bundle $E$. Then by splitting construction \cite[page 51]{fulton} there is a map $f:X'\to X$ with a filtration $f^*E = F_{n} \supset ... \supset F_0=0$ with line bundle quotients.

These bundles naturally have the structure of $\G_m$-equivariant vector bundles with weight $1$. By replacing the weight $1$ action with a weight-$a$ action we get the desired sequence. 
\end{proof}


\begin{corollary} \label{ceqvb}
 Let $E^a$ be a homogeneous $\G_m$-equivariant vector bundle of rank $n$ with weight $a$, $E$ the underlying vector bundle endowed with the trivial $\G_m$ action. Then the top equivariant Chern class in $A^*_{\G_m}(X)=A^*(X)[t]$ is $$c_{n}^{\mathbb{G}_m}(E^a)=c_{n}(E)+atc_{n-1}(E)+...+a^{n}t^{n}.$$
\end{corollary}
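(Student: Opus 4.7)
The plan is to reduce to the line-bundle case (Lemma \ref{eclb}) via the splitting established in Proposition \ref{filtration}, then extract the top-degree part.

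First, I would pull back $E^a$ along $f:X'\to X$ provided by Proposition \ref{filtration}, to obtain a filtration of $f^*E^a$ by $\G_m$-equivariant subbundles whose successive quotients are weight-$a$ line bundles $L_1^a,\dots,L_n^a$ over $X'$. By Proposition \ref{remarks}(2), the total equivariant Chern class is multiplicative along such a filtration, so
$$c^{\G_m}(f^*E^a)=\prod_{j=1}^n c^{\G_m}(L_j^a)=\prod_{j=1}^n\bigl(1+c_1(L_j)+at\bigr),$$
where the second equality uses Lemma \ref{eclb} applied to each $L_j^a$ (with $L_j$ the underlying line bundle, pulled back from $X$ with trivial action).

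Next, I would extract the degree-$n$ piece of this product. Grouping the terms, the top Chern class on $X'$ is
$$c_n^{\G_m}(f^*E^a)=\prod_{j=1}^n\bigl(c_1(L_j)+at\bigr)=\sum_{k=0}^n a^k t^k\, e_{n-k}\bigl(c_1(L_1),\dots,c_1(L_n)\bigr),$$
where $e_{n-k}$ denotes the $(n-k)$-th elementary symmetric polynomial. By the ordinary splitting principle, $e_{n-k}(c_1(L_1),\dots,c_1(L_n))=c_{n-k}(f^*E)=f^*c_{n-k}(E)$. Hence
$$c_n^{\G_m}(f^*E^a)=f^*\bigl(c_n(E)+at\,c_{n-1}(E)+\cdots+a^n t^n\bigr).$$

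Finally, since $c_n^{\G_m}(f^*E^a)=f^*c_n^{\G_m}(E^a)$ by functoriality of equivariant Chern classes, and since Proposition \ref{filtration} guarantees that $f^*:A^*_{\G_m}(X)\to A^*_{\G_m}(X')$ is injective, the equality descends to $X$, giving the claimed formula. The only mildly delicate point is making sure the weight of the $\G_m$-action is correctly tracked through the filtration (so that each quotient really has weight $a$, not weight $1$), but this is exactly what Proposition \ref{filtration} provides; no serious obstacle arises.
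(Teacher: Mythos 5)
Your proof is correct and follows essentially the same route as the paper: pull back along the map from Proposition \ref{filtration}, express the top equivariant Chern class as the product of the $c_1^{\G_m}(L_j^a)=c_1(L_j)+at$ from Lemma \ref{eclb}, expand, and descend by injectivity of $f^*$. Your version simply spells out the elementary-symmetric-polynomial bookkeeping that the paper leaves implicit.
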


\begin{proof}
 Let $f:X' \ra X$ as in Proposition \ref{filtration}. Then $c^{\G_m}_{n_a}(f^*E^a)=\prod_{i=1}^{n_a} c^{\G_m}_1(L_i^a)$. By Lemma $\ref{eclb}$ $c^{\G_m}_1(L_i^a)=c_1(L_i)+at$.
 
     Therefore $c_{n}^{\mathbb{G}_m}(f^*E^a)=c_{n}(f^*E)+atc_{n-1}(f^*E)+...+a^{n}t^{n}$. By injectivity of $f^*$ we are done.
\end{proof}

\subsection{Chern classes of weighted affine bundles}

\begin{proposition}\label{Filtration2}
{Let $\E$ be a weighted affine bundle over $X$. Let $0<a_1<...<a_r$ the different weights of the $\G_m$ action. Then there exist subbundles $F_i$ such that $$\E \supset F_r \supset ... \supset F_1 \supset 0$$ with well defined quotients $\E_i=F_i/F_{i-1}$ which are homogeneous vector bundles with weight $a_i$. } 
\end{proposition}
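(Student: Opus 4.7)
My plan is to construct the filtration by recognizing that a weighted affine bundle $\E$ is associated to a torsor under the structure group $\Gan$ of $\G_m$-equivariant automorphisms of the standard fiber $\A^n$, and exploiting the canonical $\Gan$-invariant flag of that fiber given by ``weight-$\geq c$'' subspaces. For each distinct weight $c \in \{a_1, \dots, a_r\}$, let $W_c \subset \A^n$ be the linear subspace obtained by setting to zero every coordinate of weight strictly less than $c$, yielding the flag $\A^n = W_{a_1} \supset W_{a_2} \supset \dots \supset W_{a_r} \supset 0$.

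The crucial observation is that each $W_c$ is $\Gan$-invariant: if $\phi \in \Gan$ and $x_j$ is a coordinate of weight $b_j < c$, then $\phi(x_j)$ is a weight-$b_j$ polynomial, whose monomials can involve only variables of weight $\leq b_j < c$ and hence vanish on $W_c$. By contrast, the analogous ``weight-$\leq c$'' subspaces are generally \emph{not} $\Gan$-invariant, since the unipotent part of $\Gan$ can send a higher-weight coordinate to itself plus a nontrivial polynomial in lower-weight coordinates; this is what forces the filtration in the proposition to run from the top-weight subbundle upward. Transporting the $W_c$'s along the $\Gan$-torsor associated to $\E$ produces the desired subbundles, which after a consistent choice of indexing give the chain $\E \supset F_r \supset \dots \supset F_1 \supset 0$ claimed in the statement.

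For the successive quotients, I would check that $W_{a_i}/W_{a_{i+1}}$ is a homogeneous weight-$a_i$ representation of $\Gan$ that factors through the $i$-th Levi factor $\mathrm{GL}_{n_i}$ of $\Gln = \prod_j \mathrm{GL}_{n_j}$. A unipotent element acts trivially on this graded piece: for a weight-$a_i$ coordinate $x_j$, the correction $\phi(x_j)-x_j$ is a polynomial in strictly-smaller-weight coordinates, all of which vanish on $W_{a_i}$; and coordinates of weight $>a_i$ are killed in $W_{a_i}/W_{a_{i+1}}$ anyway. Meanwhile the Levi acts by the standard $\mathrm{GL}_{n_i}$-representation on the weight-$a_i$ coordinates. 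Applying the associated-bundle construction then realizes each $F_i/F_{i-1}$ as a rank-$n_i$ vector bundle over $X$ with homogeneous $\G_m$-action of weight $a_i$, as desired.

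The main technical input is the semidirect-product decomposition $\Gan \cong \Uan \rtimes \Gln$ (the forthcoming Lemma \ref{decompositionofGan}), which is what makes the $\Gan$-action on each graded piece factor cleanly through the Levi; once this structural fact is available, the proof reduces to the two weight-bookkeeping computations on the standard fiber outlined above, together with a fiberwise check that $F_{i-1} \hookrightarrow F_i$ admits a well-defined quotient bundle (which follows because $W_{a_{i+1}} \hookrightarrow W_{a_i}$ is a linear inclusion compatible with the induced $\Gan$-action).
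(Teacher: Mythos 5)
Your proof is correct and takes a genuinely different (and in one respect more careful) route than the paper's. The paper works directly with a trivializing cover and the graded transition isomorphisms of the algebra $R$ with $E=\Spec_X(R)$: it defines $F_k$ locally as (the spectrum of) the subalgebra generated by the variables of weight $\le a_k$, observes this is preserved by transition maps because a homogeneous polynomial of weighted degree $a_h$ involves only variables of weight $\le a_h$, and then reads off the linear, weight-$a_k$ transition functions on the graded pieces. You instead pass to the $\Gan$-torsor and the canonical invariant flag of the standard fiber by ``weight $\ge c$'' coordinate subspaces, identifying the induced action on each graded piece with the projection to the Levi factor $\mathrm{GL}_{n_i}$; the weight bookkeeping (the image of a weight-$b_j$ coordinate involves only variables of weight $\le b_j$) is the same computation in both arguments. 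Your observation that the ``weight $\le c$'' subspaces are \emph{not} invariant is exactly right, and it is a point the paper glosses over: the paper's $F_k$, being $\Spec$ of a $\Gan$-stable \emph{sub}algebra, is really a quotient bundle of $E$ rather than a closed subbundle, whereas your $W_c$'s are honest closed subbundles. The price is that your filtration runs in the opposite order to the one literally stated --- your innermost term is the homogeneous weight-$a_r$ subbundle, so the quotients appear in the order $a_r,\dots,a_1$ rather than $a_1,\dots,a_r$ --- and indeed a genuine subbundle filtration with innermost piece of weight $a_1$ does not exist in general (already for weights $(1,2)$ a transition map $y\mapsto y+x^2$ destroys $\{y=0\}$). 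This reversal is harmless: only the unordered collection of graded pieces $E_i$ is used downstream (Definition \ref{affine Chern class} and Proposition \ref{weighted self intersect}), and both constructions produce the same $E_i$. Finally, the semidirect-product decomposition of Lemma \ref{decompositionofGan} is a convenience rather than a necessity here --- triviality of the unipotent action on each graded piece follows from the same degree count --- and there is no circularity in invoking it, since its proof is independent of this proposition.
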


\begin{proof} {
Let $E=\spec_{X}(R)$ and $\{U_i\}$ be a cover for $X$ such that $E|_{U_i}$ is the trivial bundle. Then
we have graded isomomorphisms $$\alpha_i: R|_{U_i} \to \mathcal{O}_{U_i}[x_{i,1}^{(a_1)},\dots,x_{i,n_1}^{(a_1)},x_{i,1}^{(a_2)},\dots,x_{i,n_2}^{(a_2)},\dots,x_{i,1}^{(a_r)},\dots,x_{i,n_r}^{(a_{r})}]$$ with $x_{i,1}^{(a_h)},...,x_{i,n_h}^{(a_h)}$ having weight $a_h$. A general transition map $$\alpha_{ij}=\alpha_j|_{U_{ij}} \circ \alpha_i|_{U_{ij}}^{-1}: \mathcal{O}_{U_ij}[x_{i,1}^{(a_1)},...,x_{i,n_r}^{(a_{n_r})}] \to \mathcal{O}_{U_{ij}}[x_{j,1}^{(a_1)},...,x_{j,r}^{(a_{n_r})}] $$ will map $x_{i,l}^{(a_h)}$ to a homogeneous polynomial of degree $a_h$.

Now let $F_k|_{U_i}:=\alpha_i^{-1}(\mathcal{O}_{U_i}[x_{i,1}^{(a_1)},...,x^{(a_k)}_{i,n_k}])$ be the locus where $\G_m$ acts with weights smaller or equal to $a_k$. This defines a subbundle of $E$. Moreover the quotients $F_k/F_{k-1}$ are well defined. Indeed, while these are affine bundles, they are locally isomorphic to vector bundles so we can at least take quotients locally. By construction, taking quotients locally gives us bundles consisting only of the weight $a_k$ pieces of $E$, and since the lower degree pieces have been reduced to 0, we are left with linear transition functions making the $F_k/F_{k-1}$ homogeneous bundles of weight $a_k$, as needed. 
}\end{proof}

\begin{definition}\label{affine Chern class}
For an affine bundle $E$, with $E_i$ as in Proposition \ref{Filtration2}  we define the $G$-equivariant total Chern class of $E$ as $c^{G}(E)=\prod c^{G}(E_i).$
\end{definition}

\begin{proposition}\label{weighted self intersect} Let $E$ be a weighted affine bundle over $X$ as above. Let $N_XE$ be the (non-weighted) normal bundle of $X$ in $E$. Then, with $X\hookrightarrow E$  the zero section, we have $$N_XE\cong E_1 \oplus ... \oplus E_r.$$ 
\end{proposition}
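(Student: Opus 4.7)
The plan is to use the $\G_m$-structure on $E$ to decompose the conormal sheaf of the zero section canonically into weight pieces, identify each piece with the dual of the corresponding factor $E_h$ from the filtration of Proposition \ref{Filtration2}, and then dualize.

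Since all weights $a_h$ are strictly positive, the zero section $X\hookrightarrow E$ is the $\G_m$-fixed locus and its ideal sheaf $I$ is $\G_m$-invariant. The conormal sheaf $I/I^2$ is therefore a $\G_m$-equivariant coherent sheaf on $X$, and since $\G_m$ acts trivially on $X$ it splits canonically into weight eigensheaves
$$I/I^2 \;=\; \bigoplus_{h=1}^{r}(I/I^2)_{a_h}.$$
In a local trivialization $E|_{U_i}=\spec_{U_i}\mathcal{O}_{U_i}[x_{i,1}^{(a_1)},\dots,x_{i,n_r}^{(a_r)}]$, the ideal $I|_{U_i}$ is the augmentation ideal, and $(I/I^2)_{a_h}|_{U_i}$ is free of rank $n_h$ on the classes of $x_{i,1}^{(a_h)},\dots,x_{i,n_h}^{(a_h)}$.

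Next I will identify $(I/I^2)_{a_h}$ with the sheaf of linear forms on the vector bundle $E_h=F_h/F_{h-1}$. The key observation is that a transition map $\alpha_{ij}$ sends $x_{i,l}^{(a_h)}$ to a polynomial that is homogeneous of weight $a_h$ and vanishes at the origin, so modulo $I^2$ only linear-in-$x$ terms of weight $a_h$ survive, and these can only involve the variables $x_{j,l'}^{(a_h)}$. This is precisely the rule that defines the transition matrices of $E_h$ in the proof of Proposition \ref{Filtration2}. Phrased globally in terms of the graded $\mathcal{O}_X$-algebra $R$ with $E=\spec_X(R)$, both sheaves are canonically the weight-$a_h$ piece of $R_+/R_+^2$, giving $E_h\cong((I/I^2)_{a_h})^\vee$. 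Summing over $h$ and dualizing yields
$$N_X E \;=\; (I/I^2)^\vee \;=\; \bigoplus_{h=1}^{r}((I/I^2)_{a_h})^\vee \;\cong\; \bigoplus_{h=1}^{r}E_h.$$

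The main subtlety is making sure the identifications of the weight eigenspaces with the $E_h$ are canonical rather than trivialization-dependent. This is handled cleanly by the graded-algebra description above, since both $R_+/R_+^2$ and its weight decomposition are defined globally, so that the matching of local transition functions automatically glues to a global isomorphism.
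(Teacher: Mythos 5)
Your proof is correct and follows essentially the same route as the paper's: both compute $N_XE$ as the dual of $I/I^2$ and observe that, modulo $I^2$, the nonlinear terms of the transition maps $\alpha_{ij}$ vanish while the surviving linear terms are exactly the transition functions of $E_1\oplus\dots\oplus E_r$. Your extra step of decomposing $I/I^2$ into $\G_m$-weight eigensheaves and phrasing everything through the global graded algebra $R$ is a welcome refinement that makes the canonicity of the gluing explicit, but it is the same underlying argument.
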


\begin{proof}
    Let $I$ be the ideal sheaf of $X$ in $E$, then $N_XE \cong \spec_E(\oplus I^n/I^{n+1})$ is a vector bundle over $X$ with the same rank as $E$ and we need only determine the transition maps. 
    
    Letting $\alpha_{ij}$ be the transition maps of $E$, the corresponding transition maps for $N_XE$ are induced by the $\alpha_{ij}$ in the natural way. Because of the quotients, all non-linear terms of $\alpha_{ij}$ become zero and the surviving linear terms are exactly the transition functions of $E_1 \oplus ... \oplus E_r$. 
\end{proof}

\begin{corollary}
Lemma \ref{lemma 2.2} also holds in the case where $E$ is a weighted affine bundle.
\end{corollary}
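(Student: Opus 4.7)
The plan is to run through the three ingredients in the proof of Lemma \ref{lemma 2.2} in the weighted setting. Let $s:X\hookrightarrow E$ denote the zero section and $\pi:E\to X$ the structure map. Since $s(X)$ is a $\G_m$-invariant closed subspace with complement $E_0$, Proposition \ref{remarks}(3) immediately yields the localization sequence
$$A^*_{\G_m}(X)\xrightarrow{s_*} A^*_{\G_m}(E)\to A^*_{\G_m}(E_0)\to 0.$$
What remains is (i) to show that $\pi^*:A^*_{\G_m}(X)\to A^*_{\G_m}(E)$ is an isomorphism inverse to $s^*$, and (ii) to identify $s^*s_*$ with multiplication by $c^{\G_m}(E)$ in the sense of Definition \ref{affine Chern class}. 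Once these are in hand, the same diagram chase used in the vector bundle case goes through verbatim.

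For (i), I would pass to the mixed quotient $(E\times U)/\G_m\to (X\times U)/\G_m$ used to compute the equivariant Chow groups. Locally on $X$, $E$ has the form $X\times\A^n$ with $\G_m$ acting linearly on $\A^n$ with strictly positive weights $a_1,\dots,a_n$. Taking $U\subset V=\A^l$ with the weight-one action, the mixed space is Zariski-locally $X\times\bigl((\A^n\times U)/\G_m\bigr)$, and $(\A^n\times U)/\G_m$ is the total space of the honest vector bundle $\bigoplus_j\mathcal{O}(a_j)$ over $\p^{l-1}$. This realizes $(E\times U)/\G_m\to (X\times U)/\G_m$ as an ordinary Zariski-local vector bundle, so ordinary homotopy invariance gives the isomorphism $\pi^*$; since $\pi\circ s=\mathrm{id}_X$, the map $s^*$ is the inverse.

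For (ii), apply the self-intersection formula to the regular zero-section embedding $s:X\hookrightarrow E$, which gives $s^*s_*(\alpha)=c_{top}^{\G_m}(N_X E)\cdot\alpha$. By Proposition \ref{weighted self intersect}, $N_XE\cong E_1\oplus\cdots\oplus E_r$ as a $\G_m$-equivariant vector bundle, since the homogeneous pieces constructed in Proposition \ref{Filtration2} come equipped with their weights $a_i$. The Whitney formula then gives
$$c_{top}^{\G_m}(N_XE)=\prod_i c_{top}^{\G_m}(E_i),$$
which is exactly $c^{\G_m}(E)$ of Definition \ref{affine Chern class}. Plugging (i) and (ii) into the diagram from the proof of Lemma \ref{lemma 2.2} shows that $\pi|_{E_0}^*$ is surjective with kernel generated by $c^{\G_m}(E)$.

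The step I expect to be the main obstacle is (i): one must check that although $\G_m$ acts on the fibers of $E$ with several different positive weights, the mixed quotient is a genuine Zariski-local vector bundle so that classical homotopy invariance applies rather than merely an equivariant analogue. The positivity of the weights is essential here, because it is what guarantees that the induced bundle on $U/\G_m$ splits as $\bigoplus\mathcal{O}(a_j)$; and the specialness of the structure group $\Gan$, recorded in Lemma \ref{decompositionofGan}, is what underlies the Zariski-local triviality used throughout.
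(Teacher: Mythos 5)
Your proof is correct and follows essentially the same route as the paper: identify $c^{\G_m}(E)$ with $c^{\G_m}(N_XE)$ via Proposition \ref{weighted self intersect} and Definition \ref{affine Chern class}, establish that $\pi^*$ is an isomorphism with inverse $s^*$, and rerun the diagram of Lemma \ref{lemma 2.2}. One small caution on your step (i): the mixed quotient $(E\times U)/\G_m\to (X\times U)/\G_m$ is only Zariski-\emph{locally} a vector bundle --- the nonlinear graded transition functions of $E$ prevent it from being one globally --- so what you should invoke is homotopy invariance for affine bundles (as the paper does, citing the Stacks project) rather than the classical vector-bundle statement; your local trivialization is exactly the hypothesis that result requires.
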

\begin{proof}
Note that by Definition \ref{affine Chern class} and Proposition $\ref{weighted self intersect}$, we have $c_i^G(E)=c_i^G(N_XE)$. 
The rest of the proof is the same as in Lemma \ref{lemma 2.2}, noting that $\pi^*:A^*(X)\to A^*(E)$ is still an isomorphism by \cite[\href{https://stacks.math.columbia.edu/tag/0GUB}{Tag 0GUB}]{stacks-project}, and $s^*:A^*(E)\to A^*(X)$ is still it's inverse.
\end{proof}

 \subsection{The Chow ring of a weighted projective stack bundle}


\begin{theorem}[weighted projective bundle formula] \label{WeighedProjectiveBundleFormula}Let $\E$ be a weighted, affine bundle over $X$ of rank $n$. Let $c_{top}^{\G_m}(E)(t)$ be its $\G_m$-equivariant top Chern class.  
Then $$A^*(\mathcal{P}(\E)) \cong \frac{A^*(X)[t]}{c_{top}^{\G_m}(E)(t)}.$$
\end{theorem}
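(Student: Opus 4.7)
The plan is to assemble the pieces already proved: translate the Chow ring of the quotient stack $\mathcal{P}(E)$ into an equivariant Chow ring, then apply the weighted version of Lemma \ref{lemma 2.2}, and finally identify the equivariant Chow ring of $X$ via Example \ref{id}.

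First I would unwind the definition: by Definition \ref{pbundledef}, $\mathcal{P}(E)=[E_0/\G_m]$, where $E_0 \subset E$ is the complement of the zero section $s:X\hookrightarrow E$. Since $\mathcal{P}(E)$ is a quotient stack by a linear algebraic group, its integral Chow ring (in the Edidin–Graham sense) coincides with the $\G_m$-equivariant Chow ring of $E_0$, i.e.\ $A^*(\mathcal{P}(E))\cong A^*_{\G_m}(E_0)$. So the theorem reduces to computing $A^*_{\G_m}(E_0)$.

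Next I would apply the corollary following Proposition \ref{weighted self intersect}, which is exactly the weighted affine bundle version of Lemma \ref{lemma 2.2}. It gives a surjection $\pi|_{E_0}^* : A^*_{\G_m}(X) \twoheadrightarrow A^*_{\G_m}(E_0)$ whose kernel is the principal ideal generated by the equivariant top Chern class $c_{top}^{\G_m}(E)\in A^n_{\G_m}(X)$. Combining this with Example \ref{id}, which identifies $A^*_{\G_m}(X)\cong A^*(X)[t]$ via $t=c_1(\mathcal{O}_{\mathbb{P}^{l-1}}(1))$, yields the desired presentation
\[
A^*(\mathcal{P}(E))\cong A^*_{\G_m}(E_0)\cong \frac{A^*(X)[t]}{c_{top}^{\G_m}(E)(t)}.
\]

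The only subtlety I anticipate is the bookkeeping: making sure that the variable $t$ appearing in the polynomial $c_{top}^{\G_m}(E)(t)=\prod_i\bigl(c_{n_i}(E_i)+a_i t\, c_{n_i-1}(E_i)+\cdots+a_i^{n_i}t^{n_i}\bigr)$ from Corollary \ref{ceqvb} and Definition \ref{affine Chern class} is the same generator as the one coming from Example \ref{id}. Both are the equivariant first Chern class of the weight-one trivial line bundle on a point (pulled back to $X$), so this is immediate, but it is worth flagging explicitly. The main obstacle, if any, is really just verifying that the extended Lemma \ref{lemma 2.2} applies verbatim in this setting; that verification has already been carried out in the corollary, so the theorem follows formally.
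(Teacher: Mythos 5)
Your proposal is correct and follows essentially the same route as the paper's own proof: identify $A^*(\mathcal{P}(E))$ with $A^*_{\G_m}(E_0)$, invoke the weighted-affine-bundle version of Lemma \ref{lemma 2.2} to get the surjection with kernel $(c_{top}^{\G_m}(E))$, and compute that class via Example \ref{id}, Definition \ref{affine Chern class}, and Corollary \ref{ceqvb}. The only difference is cosmetic: the paper cites Lemma \ref{lemma 2.2} directly where you (more carefully) cite the corollary extending it to weighted affine bundles.
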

\begin{proof}
Note that, by definition, $A^*(\mathcal{P}(E))=A^*([E_0/\G_m])=A^*_{\G_m}(E_0)$.
By Lemma \ref{lemma 2.2} we only need to prove that the image of $\ceq_n(\E)$ via the identification in Example \ref{id} is  $\prod_{i}(c_{n_i}(E_i)+a_itc_{n_i-1}(E_i)+...+a_i^{n_i}t^{n_i})$.

By Corollary \ref{ceqvb}, it follows $c^{\G_m}_{n}(E)=\prod c^{\G_m}_{n_i}(E_i)=\prod_{i}(c_{n_i}(E_i)+a_itc_{n_i-1}(E_i)+...+a_i^{n_i}t^{n_i})$ and we are done. 
\end{proof}
 
 Below there are some (familiar) special cases.
 
 \begin{example}[The Chow ring of $\mathcal{P}(a_1,...,a_n)$]\cite[Lemma 4.7]{inchiostro}
 We can consider $\mathcal{P}(a_1,...,a_n)$ as a weighted projective bundle over a point. In particular, we have that $\mathcal{P}(a_1,...,a_n)$ splits into $n$ trivial bundles with weights $a_1,...,a_n$. Each of these line bundles will have the first Chern class equal to zero. It follows that $$A^*(\mathcal{P}(a_1,...,a_n))=\frac{\mathbb{Z}[t]}{a_1...a_nt^{n+1}}.$$
 \end{example}
 
 \begin{example}[The Chow ring of a classical projective bundle] \cite[Theorem 9.6]{eisenbud_harris}
 Let $E$ be a vector bundle of rank $n$ over $X$ in the classical sense.
In this case, we have $\G_m$ acting homogeneously on the whole space with weight $1$. In particular $$A^*(\mathcal{P}(E))=\frac{A^*(X)[t]}{c_n(E)+c_{n-1}(E)t+...+t^n}.$$
 \end{example}

 \begin{example} As a toric example, this can be recovered as a consequence of \cite[Theorem 2.2]{Iwanari}.
 The exceptional divisor $\tilde X$ of the weighted blow-up of $X=V(x_1,...,x_{n})$ in $\p^{m+n}$, with (possibly equal) weights $a_1,...,a_n$.
 
 In this case, the Chow ring of the base will be $A^*(X)=A^*(\p^{m})=\frac{\mathbb{Z}[x]}{(x^{m+1})}$.
 
 The normal cone of $X$ in $\p^{n+m}$, $N_X\p^{n+m}$ will split into the sum of $n$ copies of $\mathcal{O}_{\p^{m}}(1)$, on each one of which $\G_m$ will act with weight $a_i$. We will denote the normal cone together with the $\G_m$ action with $N_{a_1,...,a_n}\p^{n+m}$
 
 Now $c_1(\mathcal{O}_{\p^{m}}(1))=x \in \frac{\mathbb{Z}[x]}{x^{m+1}}$. Therefore $c_{n}^{\G_m}(N_X\p^{n+m})=\prod(x+a_it)$ and $$A^*(\tilde X)=A^*(\mathcal{P}(N_{a_1,...,a_n}\p^{n+m}))=\frac{\mathbb{Z}[x,t]}{(x^{m+1}, \prod_{i=1}^n(x+a_it))}.$$
 \end{example}

\section{The splitting principle}

The goal of this section is to prove Theorem \ref{splitting principle}, an analog of the splitting principle. We start by proving some facts about structure groups and classifying spaces. Using those results we construct, for any weighted affine bundle $E\to X$, a map $X'\to X$ that allows us to pull back our affine bundle to a vector bundle $E' \to X'$ with $A^*(X')\cong A^*(X)$.



\subsection{Structure groups}

From here on, let $E\to X$ be an affine bundle with fibers isomorphic to an affine space $V$, $\mathbf{n}=(n_1,...,n_r)$ the dimensions of its homogeneous parts and $\mathbf{a}=(a_1,...,a_r)$ their \emph{distinct} weights. Let $\Gan$ be the group $Aut_{\G_m}(V)$ of $\G_m$-equivariant automorphisms of $V$ and $\Gln=\prod GL(n_i)$. Moreover, define $V\Gan= [V/\Gan]$ and $V\Gln=[V/\Gln]$.

\begin{lemma} \label{decompositionofGan}
 There is a surjective group homomorphism $\Gan \to \Gln$ and a section $\Gln \to \Gan$. The kernel of the surjection is a unipotent group $U_{\mathbf{a},\mathbf{n}}$. This implies that $\Gan$ is special.
\end{lemma}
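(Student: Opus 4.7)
The plan is to exhibit $\Gan$ as a semidirect product $\Uan \rtimes \Gln$ with $\Uan$ unipotent. First I would unpack $\Gan$ in coordinates. As a $\G_m$-representation $V$ decomposes canonically into weight spaces $V = \bigoplus_{i=1}^r V_{a_i}$ with $\dim V_{a_i} = n_i$; pick bases $\{x^{(a_i)}_j\}$. A $\G_m$-equivariant endomorphism of $V$ corresponds to a graded ring endomorphism of $k[x^{(a_i)}_j]$ (with $\deg x^{(a_i)}_j = a_i$), so it sends each $x^{(a_i)}_j$ to a weight-$a_i$ polynomial. Since all $a_k>0$, the space of such polynomials is finite-dimensional and the monomials appearing have total degree at most $a_i / \min_k a_k$, so $\Gan$ is an affine algebraic group.

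Next I would construct the surjection $\Gan \to \Gln$ by ``taking the linear part.'' Every $\phi \in \Gan$ preserves the weight filtration $F_\bullet$ of Proposition \ref{Filtration2} (this filtration is intrinsic to the $\G_m$-action), and the induced action on the associated graded $\bigoplus V_{a_i}$ is a block-diagonal linear automorphism, giving an element of $\Gln = \prod \mathrm{GL}(n_i)$. The section $\Gln \to \Gan$ sends a tuple $(g_1,\dots,g_r)$ to $\bigoplus g_i$, which is manifestly $\G_m$-equivariant, and the composite $\Gln \to \Gan \to \Gln$ is the identity.

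Let $\Uan$ be the kernel, so $\phi \in \Uan$ iff $\phi(x^{(a_i)}_j) - x^{(a_i)}_j$ is a weight-$a_i$ polynomial whose monomials all have total degree $\geq 2$. I would filter $\Uan$ by degree, letting $\Uan^{(d)}$ be the subgroup of those $\phi$ with $\phi(x) - x$ of total degree $\geq d+1$. Then $\Uan^{(1)} = \Uan$ and $\Uan^{(d)}$ is trivial once $d$ exceeds the degree bound from Step 1. A short direct calculation shows $\Uan^{(d)} \triangleleft \Uan^{(d-1)}$ with quotient a vector group: writing $\phi(x) = x + P$ and $\psi(x) = x + Q$ with $P, Q$ of minimal degree $d$, the composite is $x + P + Q$ modulo degree $d+1$, so composition reduces to addition of the degree-$d$ Taylor parts. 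This exhibits $\Uan$ as an iterated extension of $\G_a^N$'s, hence unipotent.

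For the last sentence, $\mathrm{GL}(n_i)$ is special so $\Gln$ is; in characteristic $0$ a unipotent group is special because it admits a composition series with $\G_a$-quotients and $\G_a$ is special; and an extension of a special group by a special group is special. Hence $\Gan \cong \Uan \rtimes \Gln$ is special. The main obstacle is the bookkeeping on the successive quotients $\Uan^{(d-1)}/\Uan^{(d)}$ — verifying that higher-order Taylor terms do not contaminate the degree-$d$ piece under composition — but this is a direct order-of-vanishing computation.
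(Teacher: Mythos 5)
Your proof is correct and lands on the same structural decomposition as the paper --- $\Gan \cong \Uan \rtimes \Gln$ with $\Uan$ an iterated extension of vector groups --- but reaches it by a more self-contained route. The paper simply quotes the recursive semidirect product $\Gan = (\mathrm{GL}_{n_r}\times G_{\mathbf{a}',\mathbf{n}'})\ltimes \G_a^{n_rN_r}$ from Quek--Rydh (Section 2.1.7 of their paper) and unravels the recursion, so its unipotent kernel is filtered weight by weight, with one layer $\G_a^{n_iN_i}$ recording the nonlinear terms landing in the weight-$a_i$ variables. You instead build the projection $\Gan\to\Gln$ directly as the ``linear part'' map and filter the kernel by the order of vanishing of $\phi(x)-x$; your check that composition becomes addition modulo higher degree is right, and the filtration terminates since a weight-$a_i$ monomial has total degree at most $a_i/a_1$. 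The two filtrations of $\Uan$ are genuinely different but both do the job; what yours buys is independence from the citation plus an explicit justification of the final claim (specialness of $\Gln$, of unipotent groups in characteristic $0$, and of extensions of special by special), which the paper leaves implicit. One small caution on wording: the filtration of $V$ actually preserved by $\Gan$ is the descending one by \emph{high} weights (equivalently, the chain of subrings generated by the low-weight variables), not the subspaces $\bigoplus_{i\le k}V_{a_i}$ themselves --- for weights $(1,2)$ the automorphism $(x,y)\mapsto(x,y+x^2)$ does not preserve $\{y=0\}$ --- but since your linear-part map is defined coordinatewise and your description of the kernel is correct, this imprecision does not affect the argument.
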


\begin{proof}
In \cite[Section 2.1.7]{quek-rydh-weighted-blow-up}, the authors offer an explicit description of $\Gan$. In fact, they decompose $\Gan$ in the recursive semidirect product
$$\Gan =(\textrm{GL}_{n_r} \times G_{\mathbf{a'}, \mathbf{n'}})\ltimes \G_a^{n_rN_r}$$
with $\mathbf{a'}=(a_1,...,a_{r-1})$, $\mathbf{n'}=(n_1,...,n_{r-1})$, and $N_r$ the dimension of $a_r^\textrm{th}$ degree piece of a graded polynomial algebra with free variables $\{x_{i,j}: 1 \le i \le r-1, \  1\le j\le n_i\}$ where $x_{i,j}$ is given weight-$a_i$.

Unraveling the recursion gives:
$$\Gan=\left(\textrm{GL}_{n_r}\times\dots\times \left(\left(\textrm{GL}_{n_1} \times \{1\}\right)\ltimes \G_a^{n_1N_1}\right)\ltimes\dots\right)\ltimes \G_a^{n_rN_r}=$$ $$=\left(...\left(\Gln\ltimes \G_a^{n_1N_1}\right)\ltimes\dots\right)\ltimes \G_a^{n_rN_r}.$$
This expression provide us with the desired surjection and section. The kernel is a successive extension of additive groups and so is unipotent.
\end{proof}

\subsection{Lemmas on classifying spaces}

\begin{lemma}\label{fibersquareofclassifyingspaces}
Given a semi-direct product of groups $G=L \ltimes U$,
we get a Cartesian diagram: $$\begin{tikzcd}
\{*\} \arrow[r] \arrow[d] & BL \arrow[d] \\
BU \arrow[r] & BG.
\end{tikzcd}$$
\end{lemma}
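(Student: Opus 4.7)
The plan is to derive the square from two ingredients coming from the semidirect product $G = L \ltimes U$: first, a fiber sequence $BU \to BG \to BL$ associated to the short exact sequence $1 \to U \to G \to L \to 1$, and second, a section $BL \to BG$ of $BG \to BL$ coming from the section $L \hookrightarrow G$ built into the semidirect product structure.

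The preliminary step is to establish the Cartesian square
\[
\begin{tikzcd}
BU \arrow[r] \arrow[d] & BG \arrow[d] \\
\{*\} \arrow[r] & BL,
\end{tikzcd}
\]
where the right vertical map is induced by the quotient $G \twoheadrightarrow L = G/U$ and the top map is induced by $U \hookrightarrow G$. On $S$-points, the pullback $BG \times_{BL} \{*\}$ parametrizes pairs $(Q,\tau)$, where $Q$ is a $G$-torsor on $S$ and $\tau$ is a trivialization of the associated $L$-torsor $Q/U$; such a trivialization is a section $\sigma\colon S \to Q/U$, and pulling back $Q \to Q/U$ along $\sigma$ produces a $U$-torsor $P$ on $S$ (the fact that $U$ is normal makes the $U$-action on this subscheme well-defined). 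Conversely, a $U$-torsor $P$ determines $Q = P \times^U G$, and since $U$ acts trivially on $G/U$ by normality, $Q/U$ is canonically trivial. This gives the asserted equivalence with $BU(S)$, and the projection to $BG$ matches the classifying map of $U \hookrightarrow G$.

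Given this, I would conclude using associativity of $2$-fiber products. The section $L \hookrightarrow G$ is a section of $G \twoheadrightarrow L$, so $BL \to BG$ is a section of $BG \to BL$, meaning the composite $BL \to BG \to BL$ is the identity. Then
\[
BU \times_{BG} BL \;\simeq\; (BG \times_{BL} \{*\}) \times_{BG} BL \;\simeq\; \{*\} \times_{BL} (BG \times_{BG} BL) \;\simeq\; \{*\} \times_{BL} BL \;\simeq\; \{*\},
\]
where the last equivalence uses that the right-hand map $BL \to BL$ is the identity. This realizes $\{*\}$ as the claimed fiber product, giving the Cartesian square in the statement. The main delicate point is the preliminary square: one has to carefully identify the projection $BG \times_{BL} \{*\} \to BG$ with the map induced by $U \hookrightarrow G$, and keep track of left/right conventions for associated torsors and quotients; once that is settled the conclusion is formal.
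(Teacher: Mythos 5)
Your proof is correct, but it takes a genuinely different route from the paper's. The paper computes the $2$-fiber product $BL\times_{BG}BU$ directly on $T$-points: an object is a triple $(P_U,P_L,\phi)$ with $\phi$ an isomorphism of the induced $G$-torsors, and the decomposition $G=L\ltimes U$ with $L\cap U=\{e\}$ forces both torsors to be trivial and $\phi$ to be unique up to unique isomorphism, so the fiber product is a point. You instead split the semidirect product into its two constituents and use each separately: the short exact sequence $1\to U\to G\to L\to 1$ gives the standard identification $BU\simeq BG\times_{BL}\{*\}$ (valid for any normal subgroup, with no splitting needed), and the section $L\hookrightarrow G$ then finishes the argument by a purely formal manipulation of $2$-fiber products, $BU\times_{BG}BL\simeq \{*\}\times_{BL}BL\simeq\{*\}$. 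Your approach isolates exactly where normality is used and exactly where the splitting is used, and the preliminary square is a reusable general fact; the paper's approach is more self-contained and avoids having to match up the projection $BG\times_{BL}\{*\}\to BG$ with the map induced by $U\hookrightarrow G$, which, as you note, is the one point in your argument requiring care with torsor conventions. Both arguments establish the same Cartesian square with the same maps.
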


\begin{proof}
The datum of a map $T\to BL\times_{BG}BU$ consists of a $U-$bundle $P_U\to T$, an $L-$bundle $P_L\to T$, and an isomorphism of $G-$bundles $\phi: (P_U\times G)/U \to (P_L\times G)/L$.
We will prove that the bundles $P_U$ and $P_L$ are trivial and $\phi$ is unique up to unique isomorphism, so that $BL\times_{BG}BU\cong \{*\}$.

Indeed, transition functions on $P_U$ (respectively on $P_L$) are given by multiplications by elements of $U$ (respectively of $L$). In particular, the isomorphism of $G$- bundles $(P_U\times G)/U \to (P_L\times G)/L$ forces the transition functions to be in the intersection $L \cap U = \{e\}$. 
This implies $P_U$ and $P_L$ are trivial bundles and from now on we will denote $(P_L\times G)/L \cong (P_U\times G)/U$ by $P$.
We identify automorphisms of the trivial $G$-bundle with elements of $G$ through the natural isomorphism, and do the same for $U$-bundles and $L$-bundles. Given 2 automorphisms, $g,g'$ of the trivial $G$-bundle there are unique $l\in L$ and $u\in U$ such that $lg=g'u$, making the following diagram commute 
$$\begin{tikzcd}
P \arrow[r, "l"] \arrow[d, "g'"] & P \arrow[d,"g"] \\
P \arrow[r,"u"] & P.
\end{tikzcd}\ $$
We have shown that for any choice of two objects in $BU \times_{BG}BL$ there is a unique isomorphism between them, as desired. 
\end{proof}

\begin{corollary} \label{Uanbundle}
$B\Gln\to B\Gan$ is a $\Uan$ bundle, specifically there is a Cartesian diagram $$\begin{tikzcd}
\Uan \arrow[r] \arrow[d] & B\Gln \arrow[d] \\
\{*\} \arrow[r] & B\Gan.
\end{tikzcd}$$
Consequently, given a morphism $X\to B\Gan$ the fiber product $X\times_{B\Gan} B\Gln \to X$ is a $\Uan$ bundle.
\end{corollary}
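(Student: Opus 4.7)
The plan is to derive the claimed Cartesian square by combining Lemma \ref{fibersquareofclassifyingspaces} with a factorization of the atlas $\{*\}\to B\Gan$ through $B\Uan$, then paste Cartesian squares. By Lemma \ref{decompositionofGan} we have $\Gan = \Gln \ltimes \Uan$, so Lemma \ref{fibersquareofclassifyingspaces} yields the Cartesian square
$$\begin{tikzcd}
\{*\} \arrow[r] \arrow[d] & B\Gln \arrow[d] \\
B\Uan \arrow[r] & B\Gan,
\end{tikzcd}$$
where the vertical maps come from the section $\Gln \hookrightarrow \Gan$ and the inclusion $\Uan \hookrightarrow \Gan$, respectively.

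The key observation I would make is that the atlas $\{*\} \to B\Gan$ factors as $\{*\} \to B\Uan \to B\Gan$, since the trivial $\Gan$-bundle on a point is the extension of scalars of the trivial $\Uan$-bundle along $\Uan\hookrightarrow\Gan$. Pulling back the whole diagram along $\{*\} \to B\Uan$ and applying the pasting lemma for Cartesian squares gives
$$\{*\} \times_{B\Gan} B\Gln \;\cong\; \{*\} \times_{B\Uan} \bigl(B\Uan \times_{B\Gan} B\Gln\bigr) \;\cong\; \{*\} \times_{B\Uan} \{*\} \;\cong\; \Uan,$$
where the final isomorphism uses that the self-fiber-product of the atlas of a classifying stack recovers the group scheme (via automorphisms of the trivial $\Uan$-torsor on a test scheme $T$). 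This is precisely the Cartesian square in the statement.

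For the final assertion, given a morphism $X \to B\Gan$, base-changing the Cartesian square just produced along $X \to B\Gan$ yields the Cartesian square
$$\begin{tikzcd}
X \times_{B\Gan} B\Gln \arrow[r] \arrow[d] & B\Gln \arrow[d] \\
X \arrow[r] & B\Gan.
\end{tikzcd}$$
The map $B\Gln \to B\Gan$ becomes the trivial $\Uan$-bundle $\Uan \to \{*\}$ after pullback along the smooth atlas $\{*\} \to B\Gan$, so it is smooth-locally a trivial $\Uan$-bundle; this property is preserved by the further base change along $X \to B\Gan$, so $X \times_{B\Gan} B\Gln \to X$ is a $\Uan$-bundle.

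The main obstacle is purely bookkeeping: correctly identifying the factorization of the atlas through $B\Uan$ and orienting the pasting of Cartesian squares accordingly. Once that is in place, the identification $\{*\} \times_{B\Uan} \{*\} \cong \Uan$ and the smooth-local characterization of $\Uan$-bundles make both claims formal.
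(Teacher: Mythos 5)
Your proposal is correct and follows essentially the same route as the paper: both apply Lemma \ref{fibersquareofclassifyingspaces} to the decomposition $\Gan=\Gln\ltimes\Uan$ to get the square over $B\Uan$, then paste with the standard presentation $\Uan\cong\{*\}\times_{B\Uan}\{*\}$ (your ``factorization of the atlas through $B\Uan$'' is exactly the paper's ``appending on the left''), and conclude the last claim by base change.
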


\begin{proof}
Applying Lemma \ref{fibersquareofclassifyingspaces} to $\Gan,\Gln,\Uan$ as in Lemma \ref{decompositionofGan} and appending on the left the Cartesian diagram coming from the standard presentation of $B\Uan$:
$$\begin{tikzcd}
\Uan \arrow[r] \arrow[d] & \{*\}\arrow[d]\\
\{*\} \arrow[r] & B\Uan,
\end{tikzcd}$$
we get the Cartesian diagram:

$$\begin{tikzcd}
\Uan \arrow[r] \arrow[d] & \{*\} \arrow[r] \arrow[d] & B\Gln \arrow[d] \\
\{*\} \arrow[r] & B\Uan \arrow[r] & B\Gan
\end{tikzcd} $$ as desired.

Then $X\times_{B\Gan} B\Gln \to X$ is the pullback of a $\Uan$ bundle and hence is a $\Uan$ bundle.
\end{proof}

\begin{lemma} \label{GroupLemma2ElectricBoogaloo}
Let $L$ be a subgroup of a group scheme $G$ acting on a scheme $V$. Then the following diagram is Cartesian:
$$ \begin{tikzcd}
\left[V/L\right] \arrow[r] \arrow[d] & BL \arrow[d]\\
\left[V/G\right] \arrow[r] & BG.
\end{tikzcd}$$ 
\end{lemma}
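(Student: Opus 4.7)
The plan is to invoke the pasting lemma for Cartesian squares by prepending the canonical atlas $V \to [V/L]$ on top of the given diagram, forming the three-row diagram
$$\begin{tikzcd}
V \arrow[r] \arrow[d] & \{*\} \arrow[d]\\
\left[V/L\right] \arrow[r] \arrow[d] & BL \arrow[d]\\
\left[V/G\right] \arrow[r] & BG.
\end{tikzcd}$$
By the very definition of the quotient stack $\left[V/L\right]$, the map $V \to \left[V/L\right]$ is the universal $L$-torsor, which is the pullback of the universal $L$-torsor $\{*\} \to BL$ along the forgetful map $\left[V/L\right] \to BL$; hence the top square is Cartesian. Exactly the same argument with $G$ replacing $L$ shows that the outer rectangle is Cartesian. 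The pasting lemma then forces the bottom square, which is the one in the statement, to be Cartesian as well.

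The only point that requires a small verification is that the two commutative structures on the composite rectangle match up: the composition $V \to \left[V/L\right] \to \left[V/G\right]$ agrees with the atlas $V \to \left[V/G\right]$ coming from the $G$-action. This is because the map $\left[V/L\right] \to \left[V/G\right]$ classifies extension of structure group $P \mapsto P \times^L G$ together with the induced $G$-equivariant map to $V$, and applying this to the tautological $L$-torsor over $\left[V/L\right]$ yields the tautological $G$-torsor over $\left[V/G\right]$.

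If one prefers a direct argument in the style of Lemma~\ref{fibersquareofclassifyingspaces}, a $T$-point of $BL \times_{BG} \left[V/G\right]$ consists of an $L$-torsor $P \to T$, a $G$-torsor $Q \to T$ with a $G$-equivariant map $Q \to V$, and an isomorphism $\phi : P \times^L G \xrightarrow{\sim} Q$ of $G$-torsors. Using $\phi$ to replace $Q$ by $P \times^L G$, the $G$-equivariant map $P \times^L G \to V$ is adjoint to an $L$-equivariant map $P \to V$, giving a $T$-point of $\left[V/L\right]$; the reverse construction sends $(P,P\to V)$ to the induced triple. I expect the main (and essentially only) obstacle in either approach to be the bookkeeping of identifying the various 2-isomorphisms so that everything strictly matches on the nose; there is no conceptual difficulty beyond this.
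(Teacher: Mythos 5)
Your fallback ``direct argument'' is, in substance, exactly the paper's proof: the paper describes a $T$-point of $[V/G]\times_{BG}BL$ as a triple $(P,Q,\alpha)$ with $P$ a $G$-torsor equipped with an equivariant map to $V$, $Q$ an $L$-torsor, and $\alpha\colon P\xrightarrow{\sim}Q\times^L G$, and then transfers the equivariant map back and forth between $P$ and $Q$ just as you indicate. The ``bookkeeping of 2-isomorphisms'' you defer is the content of the paper's verification that the induced map $Q\to V$ is $L$-equivariant and that $\Phi(q,g)=\psi(q)g$ is well defined and $G$-equivariant; it is routine but should be written out, as the paper does.

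Your primary route, however, misstates the pasting lemma. The valid directions are: (i) both squares Cartesian $\Rightarrow$ outer rectangle Cartesian, and (ii) \emph{bottom} square Cartesian and outer rectangle Cartesian $\Rightarrow$ top square Cartesian. The direction you invoke --- top square and outer rectangle Cartesian $\Rightarrow$ bottom square Cartesian --- is false in general (for instance, pulling any non-Cartesian bottom square back along $\emptyset\to D$ produces a Cartesian top square and a Cartesian outer rectangle). What rescues the argument here is that the right-hand vertical map of your top square, $\{*\}\to BL$, is an fppf cover, hence an effective epimorphism: the comparison map $[V/L]\to [V/G]\times_{BG}BL$ of stacks over $BL$ becomes, after base change along $\{*\}\to BL$, the identity $V\to V$ (this is precisely the compatibility of atlases you check at the end), and a morphism over $BL$ that is an isomorphism after pulling back along a cover is an isomorphism. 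If you want to keep the pasting-style argument, you must add this descent step explicitly; as written, the inference is a gap.
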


\begin{proof}
An object over a scheme $S$ in $[V/G] \times_{BG}BL$ is a triple $(P, Q, \alpha)$ where $P\ra S$ is a $G$-torsor, together with a $G$-equivariant map to $V$ $$\begin{tikzcd}
P \arrow[d] \arrow[r, "\phi"] & V; \\
S
\end{tikzcd}$$ $Q \ra S$ is a $L$-torsor; $\alpha$ is an isomorphism of $G$-torsors $ P \xrightarrow{\alpha} Q \times G / L$.

Given such an object we can construct an object in $[V/L]$ by considering the $L$-torsor $Q \ra S$ together with the map $\psi: Q \ra V$ defined as follows $$\begin{tikzcd} Q \arrow[d] \arrow[r] & Q \times G / L \arrow[r, "\alpha^{-1}"]& P \arrow[r,"\phi"] & V. \\ S\end{tikzcd}$$
To verify this is indeed an object of $[V/L]$, we need to prove that $\psi$ is $L$-equivariant. 
Now, $\phi$ and $\alpha^{-1}$ are $G$-equivariant, and in particular $L$-equivariant. Moreover the quotient map $Q \ra Q \times G/L$ maps an element $ql$ to $ [ql,e]=[qll^{-1}, le]=[q,l]$. But $L$ acts on $Q \times G/L$ through its inclusion into $G$, hence $ql \mapsto [q,e]l$ as desired. 

On the other hand given a $L$-torsor $Q \ra S$ together with a $L$-equivariant map $\psi: Q \ra V$ in $[V/L]$, we can construct the triple $(Q \times G/L, Q, id)$ as an object of $[V/G] \times_{BG}BL$. In order for $Q \times G/ L $ to be an object in $[V/G]$, we must equip it with a $G$-equivariant map $Q \times G/L \ra V$ or, equivalently, with a $G$-equivariant, $L$-invariant map $Q \times G \ra V$. The map $\Phi: Q \times G \ra V$ defined by $(q,g) \mapsto \psi(q)g$ is $L$-invariant with respect to the action $l \cdot (q,g)=(ql,l^{-1}g)$ we are quotienting by, indeed $(ql,l^{-1}g) \mapsto \psi(ql)l^{-1}g=\psi(q)ll^{-1}g=\psi(q)g$. Moreover $\Phi$ is $G$-equivariant: $\Phi((q,g) \cdot h)=\psi(q)gh=(\psi(q)g) \cdot h$. 
The verification that the functors defined are indeed inverses is standard and will be omitted.

\end{proof}

\subsection{The Splitting Principle}

\begin{proposition} \label{PropositionDefinition}
Given a weighted affine bundle $E \ra X$ (respectively a weighted vector bundle), we have a natural map $X \ra B\Gan$ (respectively $X \ra B\Gln$) such that $E$ is the pull back of $V\Gan$ (respectively $V\Gln$). 
\end{proposition}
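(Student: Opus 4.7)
The plan is to pass from a weighted affine bundle to its cocycle of $\G_m$-equivariant transition functions, assemble this cocycle into a $\Gan$-torsor on $X$ (equivalently, a morphism $X \ra B\Gan$), and then recognize $E$ as the associated $V$-bundle obtained by pulling back $V\Gan$.

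First, by Definition \ref{bundledef} the affine bundle $E \ra X$ admits smooth-local $\G_m$-equivariant trivializations $\phi_i \colon E|_{U_i} \xrightarrow{\sim} U_i \times V$; in fact these exist Zariski-locally since $\Gan$ is special by Lemma \ref{decompositionofGan}. On an overlap $U_{ij}$, the automorphism $\phi_j \circ \phi_i^{-1}$ of $U_{ij} \times V$ over $U_{ij}$ is $\G_m$-equivariant, so pointwise it lies in $\Gan = \operatorname{Aut}_{\G_m}(V)$. The resulting cocycle $\{g_{ij}\colon U_{ij} \ra \Gan\}$ defines a $\Gan$-torsor $P \ra X$, which corresponds canonically to the desired classifying morphism $X \ra B\Gan$.

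Second, I would identify $E$ with the pullback of $V\Gan$ along this morphism. Applying Lemma \ref{GroupLemma2ElectricBoogaloo} with $L$ the trivial subgroup of $G = \Gan$ yields a Cartesian square exhibiting $V\Gan \ra B\Gan$ as the universal $V$-bundle with structure group $\Gan$; hence its pullback along $X \ra B\Gan$ is the associated bundle $[P \times V / \Gan]$, which by construction is glued from the local charts $U_i \times V$ via the cocycle $\{g_{ij}\}$ and therefore is canonically $\G_m$-equivariantly isomorphic to $E$.

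The weighted vector bundle case is entirely parallel: linearity of the transitions forces the cocycle to take values in the section $\Gln \hookrightarrow \Gan$ of Lemma \ref{decompositionofGan}, giving a morphism $X \ra B\Gln$ whose pullback of $V\Gln = [V/\Gln]$ is $E$. The main point to be careful about is not the classifying-space formalism itself, which is standard, but ensuring that the trivializations and cocycle are chosen $\G_m$-equivariantly throughout, so that the reconstructed bundle carries the original weighted affine (respectively vector) bundle structure and not merely its underlying scheme.
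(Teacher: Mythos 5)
Your proof is correct and follows essentially the same route as the paper: both construct the classifying $\Gan$-torsor attached to $E$ (the paper packages your transition cocycle invariantly as the sheaf $Isom_X(E, V\times X)$ rather than choosing local trivializations) and then identify $E$ with the associated bundle $[P\times V/\Gan]$, i.e.\ the pullback of $V\Gan$. The only presentational difference is that the paper verifies the pullback identification with a Cartesian-cube argument rather than by invoking Lemma \ref{GroupLemma2ElectricBoogaloo} with trivial $L$.
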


\begin{proof}
We prove the result for $V\Gan$, the result for $V\Gln$ is effectively the same, with the obvious modifications. 
Let us denote with $Isom$ the sheaf of isomorphisms of affine bundles $Isom_X(E, V \times X)$ (respectively, the sheaf of isomorphisms of weighted vector bundles). 
By straightforward application of the definitions, it can be seen that $Isom$ is a principal $\Gan$ bundle over $X$ and $Isom \times_{X} E \cong V \times_{\{*\}} Isom$.

In particular we get the Cartesian diagram 
$$\begin{tikzcd}
Isom \arrow[r] \arrow[d] & \{*\} \arrow[d] \\
X \arrow[r] & B\Gan.
\end{tikzcd}$$

Then we can fit the spaces above in the following commutative cube

$$\begin{tikzcd}[sep = .4cm]
Isom \times_{X} E \arrow[rr]\arrow[dd]\arrow[dr]
&& V  \arrow[dd]\arrow[dr]
\\
& E && V\Gan \arrow[dd]\arrow[from=ll,dashed,crossing over]\\
Isom \arrow[rr]\arrow[dr]&&  \{*\}\arrow[dr]\\
& X \arrow[rr]\arrow[from=uu, crossing over] && B\Gan
\end{tikzcd}$$
 where the bottom, back and sides squares are fiber squares.

Notice that $Isom \times_{X} E \to E$ is a principal $\Gan$ bundle. Moreover, the action of $\Gan$ on $Isom$ gives a $\Gan$-equivariant map $Isom \times_{X} E \to V$ via the identification of $Isom \times_{X} E$ with $V \times Isom$. This gives us a map or quotient stacks $E\to V\Gan$ which makes the top of the cube Cartesian.

It follows that $$\begin{tikzcd}
E \arrow[r] \arrow[d] & V\Gan \arrow[d] \\
X \arrow[r] & B\Gan
\end{tikzcd}$$
is a fiber square, as needed.

\end{proof}

\begin{corollary} \label{cubediagram}
Let $E \ra X$ be a weighted affine bundle, with corresponding map $X \to B\Gan$ . Then $E'$, the pullback of $E$ via the map $X'=X \times_{B\Gan}\Gln \ra X$, is a weighted \emph{vector} bundle.
\end{corollary}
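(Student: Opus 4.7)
The strategy is to identify $E'$ with the pullback of the universal weighted vector bundle $V\Gln \to B\Gln$ along a natural map $X' \to B\Gln$. Combined with the weighted vector bundle version of Proposition \ref{PropositionDefinition}, this will immediately make $E'$ into a weighted vector bundle.

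First I would invoke Lemma \ref{decompositionofGan} to produce the section $\Gln \hookrightarrow \Gan$ and have $\Gln$ act on $V$ through this inclusion. Applying Lemma \ref{GroupLemma2ElectricBoogaloo} to $\Gln \subset \Gan$ acting on $V$ yields the Cartesian square
\[
\begin{tikzcd}
V\Gln \arrow[r] \arrow[d] & B\Gln \arrow[d]\\
V\Gan \arrow[r] & B\Gan.
\end{tikzcd}
\]
By the universal property of $X' = X \times_{B\Gan} B\Gln$, the composition $X' \to X \to B\Gan$ factors through the projection $X' \to B\Gln$. Pasting the above Cartesian square with the one from Proposition \ref{PropositionDefinition} (which identifies $E$ with $X \times_{B\Gan} V\Gan$) gives the chain of identifications
\[
E' \;=\; E \times_X X' \;\cong\; V\Gan \times_{B\Gan} X' \;\cong\; V\Gln \times_{B\Gln} X',
\]
so $E'$ is classified by the map $X' \to B\Gln$.

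What remains is to check that a bundle classified by a map to $B\Gln$ is indeed a weighted vector bundle in the sense of Definition \ref{bundledef}. Since $\Gln = \prod \mathrm{GL}(n_i)$ is a product of special groups, a $\Gln$-torsor on $X'$ is Zariski locally trivial and amounts to a tuple of ordinary vector bundles $E'_i$ of rank $n_i$; the associated bundle $V\Gln \times_{B\Gln} X'$ is then $\bigoplus E'_i$ with $\G_m$ acting on each $E'_i$ with weight $a_i$ inherited from $V$, and with transition functions that are linear because $\Gln$ acts on $V$ linearly. I expect the only delicate point to be keeping track of the $\G_m$-equivariance throughout the pasting, but this is automatic since the $\G_m$-action on $V$ commutes with the action of $\Gan$ (and hence with $\Gln$) by the very definition $\Gan = \operatorname{Aut}_{\G_m}(V)$.
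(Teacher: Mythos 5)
Your proposal is correct and follows essentially the same route as the paper: both use Proposition \ref{PropositionDefinition} and Lemma \ref{GroupLemma2ElectricBoogaloo} to identify $E'$ with the pullback of $V\Gln\to B\Gln$ along $X'\to B\Gln$ (the paper phrases the pasting of Cartesian squares as a commutative cube). Your closing paragraph just makes explicit the final step the paper leaves implicit, namely that a pullback of $V\Gln\to B\Gln$ is a weighted vector bundle.
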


\begin{proof}
Consider the following diagram

$$\begin{tikzcd}[sep = .4cm]
E' \arrow[rr]\arrow[dd]\arrow[dr]
&& X' \arrow[dd]\arrow[dr]
\\
&V\Gln && B\Gln \arrow[dd]\arrow[from=ll,crossing over]\\
E \arrow[rr]\arrow[dr]&& X\arrow[dr]\\
& V\Gan \arrow[rr]\arrow[from=uu, crossing over] && B\Gan
\end{tikzcd}$$
The back and right squares are Cartesian by construction.
By Lemma \ref{GroupLemma2ElectricBoogaloo} and Proposition \ref{PropositionDefinition} we have that the front and bottom squares are Cartesian. 
Any such cube with these sides Cartesian is Cartesian, in particular the top square. 
Since $E'\to X'$ is the pullback of the vector bundle $V\Gln \ra B\Gln$, it is a vector bundle as desired.
\end{proof}

\begin{lemma} \label{isomorphismofChowrings}
Let  $X' \ra X$ be as in Corollary \ref{cubediagram}, then the pullback map of Chow rings $A^*(X) \ra A^*(X')$ is an isomorphism. 
\end{lemma}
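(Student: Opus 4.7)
The plan is to exploit the unipotent nature of the kernel $\Uan$ to reduce the statement to an iterated application of the homotopy invariance of Chow rings for affine bundles (Stacks Project Tag 0GUB, as cited earlier in the paper).

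First, I would recall from Corollary \ref{Uanbundle} that $X' = X \times_{B\Gan} B\Gln \to X$ is a $\Uan$-torsor, and from (the proof of) Lemma \ref{decompositionofGan} that $\Uan$ is built as a successive semidirect product of copies of $\G_a^{n_iN_i}$. In particular, $\Uan$ admits a filtration by normal subgroups
\[
\{e\} = U_0 \triangleleft U_1 \triangleleft \cdots \triangleleft U_s = \Uan
\]
with each quotient $U_{i+1}/U_i \cong \G_a$.

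Next, I would use this filtration to factor the torsor $X' \to X$ as a tower
\[
X' = X_s \to X_{s-1} \to \cdots \to X_1 \to X_0 = X,
\]
where $X_i = X' / U_i$ (the quotient by the free action of the normal subgroup $U_i$, which exists as an algebraic space since $\Uan$ is unipotent, hence special). Each map $X_{i+1} \to X_i$ is then a torsor under $U_{i+1}/U_i \cong \G_a$, and every $\G_a$-torsor is Zariski-locally trivial and therefore an affine bundle (of rank $1$) in the sense of the definition in Section~3.

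Finally, by \cite[Tag 0GUB]{stacks-project}, pullback along each affine bundle $X_{i+1} \to X_i$ induces an isomorphism $A^*(X_i) \xrightarrow{\sim} A^*(X_{i+1})$. Composing these isomorphisms yields the desired isomorphism $A^*(X) \xrightarrow{\sim} A^*(X')$. The main subtlety is simply verifying that the intermediate quotients $X_i$ exist as reasonable algebraic spaces so that the tower of $\G_a$-torsors is well-defined; this follows from the fact that each $U_i$ is normal in $\Uan$ and acts freely on $X'$ (since $\Uan$ does), so that the quotients exist and the induced maps between successive quotients are $\G_a$-torsors.
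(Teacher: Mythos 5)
Your proof is correct and follows essentially the same route as the paper: both arguments use the unipotent filtration of $\Uan$ by $\G_a$'s to factor $X'\to X$ as a tower of affine bundles and then invoke \cite[Tag 0GUB]{stacks-project} at each stage. Your write-up is simply a more detailed version of the paper's (rather terse) argument, with the added care about the existence of the intermediate quotients $X_i$.
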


\begin{proof}
By Corollary \ref{Uanbundle},  $X' \xrightarrow{\phi} X$ is a $\Uan$-bundle and $\Uan$ is a unipotent group. In particular, $\Uan$ is a successive extension of the additive group $\G_a$ by its self and being a $\Uan$ bundle is equivalent to being a succession of affine bundles, hence by \cite[\href{https://stacks.math.columbia.edu/tag/0GUB}{Tag 0GUB}]{stacks-project} we obtain an isomorphism of Chow rings $\phi^*: A^*(X) \ra A^*(X')$.
\end{proof}

\begin{theorem}[The splitting principle] \label{splitting principle}
Let $T$ be the standard maximal torus in $\Gln$. Then the map $X'' \ra X$ in the following diagram $$\begin{tikzcd}
X'' \arrow[r] \arrow[d] & BT \arrow[d]\\
X' \arrow[r] \arrow[d] & B\Gln \arrow[d] \\
X \arrow[r] & B\Gan
\end{tikzcd}$$ induces an injection of Chow rings $A^*(X) \hookrightarrow A^*(X'')$ via pullback. 
\end{theorem}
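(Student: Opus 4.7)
The plan is to factor the pullback map $A^*(X)\to A^*(X'')$ through $A^*(X')$ and treat the two squares in the diagram separately: the lower square gives an \emph{isomorphism} on Chow rings, and the upper square is an instance of the classical splitting principle, which gives an \emph{injection}. Composition then yields the desired injection.

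For the lower square, I would apply the tools already assembled in this section. By Corollary \ref{Uanbundle}, the morphism $B\Gln \to B\Gan$ is a $\Uan$-bundle, and so its pullback $X' = X\times_{B\Gan} B\Gln \to X$ is a $\Uan$-bundle as well. Since $\Uan$ is unipotent (Lemma \ref{decompositionofGan}), Lemma \ref{isomorphismofChowrings} applies verbatim and shows that the pullback $A^*(X)\to A^*(X')$ is an isomorphism.

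For the upper square, note that $X''=X'\times_{B\Gln} BT$ is the pullback along $X'\to B\Gln$ of the morphism $BT\to B\Gln$. Recall that $\Gln=\prod_i \mathrm{GL}(n_i)$ and $T=\prod_i T_i$ where $T_i\subset \mathrm{GL}(n_i)$ is the standard maximal torus; thus $BT\to B\Gln$ is (a fiber product of) a flag bundle with fiber $\Gln/T=\prod_i \mathrm{GL}(n_i)/T_i$, and by base change $X''\to X'$ is a flag bundle over $X'$. This is exactly the setup of the classical splitting principle as in \cite{fulton} (see also \cite{totaro}): the pullback $A^*(X')\to A^*(X'')$ is (split) injective, because $X''\to X'$ factors as a tower of projective bundles to which the projective bundle formula applies, exhibiting $A^*(X'')$ as a free $A^*(X')$-module.

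Combining the two steps, the composition $A^*(X)\xrightarrow{\sim} A^*(X')\hookrightarrow A^*(X'')$ is injective, as claimed. The content is genuinely split between the two rows: the lower square is where the weighted structure matters, and that is handled by the unipotent kernel argument (Lemma \ref{decompositionofGan} and Corollary \ref{Uanbundle}); the upper square reduces immediately to the classical splitting principle. I do not expect any real obstacle here, as all the ingredients are already in place; the only thing to be careful about is that $\Gln$ in this paper denotes the product $\prod \mathrm{GL}(n_i)$ rather than a single general linear group, and that the classical splitting principle applies componentwise to such a product of flag bundles.
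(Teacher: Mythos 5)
Your proposal is correct and follows the same route as the paper's proof: the lower square gives an isomorphism $A^*(X)\cong A^*(X')$ via Lemma \ref{isomorphismofChowrings} (the $\Uan$-bundle structure from Corollary \ref{Uanbundle}), and the upper square gives the injection $A^*(X')\hookrightarrow A^*(X'')$ by the classical splitting principle, which the paper cites via \cite[Theorem 2.13]{totaro}. Your added remarks on the flag-bundle structure of $X''\to X'$ and the componentwise treatment of $\Gln=\prod \mathrm{GL}(n_i)$ are consistent with, and merely elaborate on, that citation.
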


\begin{proof}
By the argument in the proof of \cite[Theorem 2.13]{totaro} we have an injection $A^*(X') \hookrightarrow A^*(X'')$. By composing with the isomorphism in Lemma \ref{isomorphismofChowrings}, we have the desired map. 
\end{proof}

\section{The Gysin homomorphism induced by a weighted blow-up}

The goal for this section is to prove Theorem \ref{Gysin homomorphism for weighted blow-up}, which replaces the excess bundle formula in the case of weighted blow-ups. 

The strategy for the proof is to reduce to the special case of the weighted blow-up of $BT$ in $[\A^d/T]$, which will be computed in Section \ref{The special case}. 

The reduction to the special case is performed in two steps: first we reduce to the case of the blow-up of an affine space (Section \ref{specializationtonormalcone}), and then we apply the splitting principle Theorem \ref{splitting principle}. 

Some caution is needed when defining $f^!$, as we don't always have the needed Cartesian diagram. In Section \ref{sectionnotation} we address the issue as well as setting some notation for the rest of the paper.

\subsection{Notation}\label{sectionnotation}

Let $\tilde Y \ra Y$ be the weighted blow-up of $Y$ centered at $X$, and let $\tilde X$ be the exceptional divisor. 

As observed in \cite[Remark 3.2.10]{quek-rydh-weighted-blow-up} the commutative square is not always Cartesian
$$\begin{tikzcd}
\tilde X \arrow[r] \arrow[d] & \tilde Y \arrow[d] \\
X \arrow[r] & Y
\end{tikzcd} $$
and when defining $f^!$ we have to make sure to define it with respect to the fiber square $$\begin{tikzcd}
X \times_{Y} \tilde Y \arrow[r] \arrow[d] & \tilde Y \arrow[d] \\
X \arrow[r] & Y
\end{tikzcd}.$$

Moreover we have $\tilde X=(X \times_{Y} \tilde Y)_{red}$ and the diagram below commutes $$\begin{tikzcd}
\tilde  X \arrow[r] \arrow[dr,"g"] & X \times_{Y} \tilde Y \arrow[r,"j"]\arrow[d,"h"]&
\tilde Y \arrow[d,"f"]\\ &
X \arrow[r,"i"]& Y
\end{tikzcd}$$

When looking at Chow rings though, we have a natural isomorphism $(red)_*: A^*(\tilde X) \ra A^*(X \times_{Y} \tilde Y)$ induced by the reduction map $red: \tilde X \ra X \times_{Y} \tilde Y$. 

In particular, it makes sense to talk about $f^!$ as the composition of $(red)_*^{-1} \circ f^!$.  Throughout the rest of the paper, we will refer to it simply as $f^!$. 

\begin{lemma} \label{structure of the map} The map $f^!: A^*(X) \ra A^*(\tilde X)$ is of the form $f^!(\alpha)=g^*(\alpha) \cdot \gamma$ for some element $\gamma \in A^*(\tilde X)$. 
\end{lemma}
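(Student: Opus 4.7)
The plan is to set $\gamma := f^!(1) \in A^*(\tilde X)$ and deduce the claimed structure from the projection formula for Fulton's refined Gysin homomorphism.

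First I would recall that, as specified in Section \ref{sectionnotation}, $f^!$ is defined via the honest Cartesian square
$$\begin{tikzcd}
X \times_{Y} \tilde Y \arrow[r] \arrow[d, "h"] & \tilde Y \arrow[d, "f"] \\
X \arrow[r, "i"] & Y
\end{tikzcd}$$
and the reduction identification $(red)_*: A^*(\tilde X) \xrightarrow{\sim} A^*(X \times_Y \tilde Y)$. In this form, $f^!$ is an instance of the refined Gysin homomorphism of \cite[Chapter 6]{fulton}, so all of its formal properties---in particular the projection/linearity formula---are available, both for algebraic spaces and, through the Edidin--Graham machinery set up in Section \ref{Equivariant Intersection Theory}, in the equivariant and stack settings.

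The key ingredient is then the projection formula for the refined Gysin: for any $\alpha \in A^*(X)$ and $\beta \in A_*(X)$,
$$f^!(\alpha \cap \beta) \;=\; h^*(\alpha) \cap f^!(\beta) \quad \text{in } A_*(X \times_Y \tilde Y).$$
Since $g = h \circ red$, we have $g^* = red^* \circ h^*$, and so under the isomorphism $(red)_*^{-1}$ this rewrites as
$$f^!(\alpha \cap \beta) \;=\; g^*(\alpha) \cdot f^!(\beta) \quad \text{in } A^*(\tilde X).$$
Specializing to $\beta = [X] = 1$ immediately gives $f^!(\alpha) = g^*(\alpha) \cdot \gamma$ with $\gamma := f^!(1) \in A^*(\tilde X)$, which is the desired conclusion.

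I do not expect a serious obstacle: the main care required is just the bookkeeping between the non-Cartesian diagram (involving $\tilde X$) and the Cartesian one (involving $X \times_Y \tilde Y$), but this is harmless because the reduction map induces a Chow-ring isomorphism that is compatible with pullbacks and with intersection products. The substantive content of the lemma is thus really the $A^*(X)$-linearity of the refined Gysin, which is classical.
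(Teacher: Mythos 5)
Your proof is correct, but it takes a genuinely different route from the paper's. You derive the lemma from the $A^*(X)$-module property of the refined Gysin over the smooth base $X$ (the compatibility of $f^!$ with operational classes, i.e.\ \cite[Chapter 8]{fulton} or the bivariant axiom $(C_3)$): specializing $f^!(\alpha\cap\beta)=h^*(\alpha)\cap f^!(\beta)$ at $\beta=[X]$ gives the statement and, as a bonus, identifies $\gamma$ explicitly as $f^!(1)$ --- which is consistent with the value $\frac{P(t)-P(0)}{t}$ computed later in Theorem \ref{Gysin homomorphism for weighted blow-up}. The paper instead views $f^!$ as an element of the bivariant group $A^*(\tilde X_U\to X)$ for finite-dimensional approximations $\tilde X_U$ and invokes the structure theorem for bivariant classes of a \emph{smooth morphism} $g_U$ (\cite[Theorem 17.4.2]{fulton}), which asserts that every such class has the form $\gamma_U\cdot g_U^*$; this yields $\gamma$ only abstractly and rests on a deeper input (Kimura's theorem, available here since we work in characteristic $0$). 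Your route is more elementary in that respect, but it shifts the burden onto justifying the module compatibility in the equivariant and stacky setting: the one real gap in your write-up is that ``through the Edidin--Graham machinery'' should be replaced by the explicit reduction the paper performs, namely passing to the algebraic spaces $\tilde X_U=(\mathcal{N}_XY\smallsetminus 0)\times U/\G_m$ and $\tilde Y_U$ in a range of degrees, where Fulton's compatibilities apply verbatim, and then checking the resulting classes agree as $U$ grows. With that step spelled out, your argument is a complete and slightly more self-contained alternative proof.
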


\begin{proof}
In a similar fashion to what we did in Proposition \ref{remarks}, we will prove the statement by passing through algebraic spaces.

Precisely, let $\tilde X_{U}=(\mathcal{N}_XY \smallsetminus 0 ) \times U  / \G_m$ with $U$ open as in Definition \ref{definition of G-equivariant Chow groups} inducing isomorphisms of Chow groups for $\tilde X$ of the appropriate degree. 
In fact, if $U$ is chosen large enough we also get the algebraic space $\tilde Y_U$ with analogous induced isomorphisms of Chow groups for $\tilde Y$. 

For the appropriate degrees, the induced maps $f_U: \tilde Y_U \ra Y$ with  $(\tilde y, u) \mapsto f(\tilde y)$ and $g_U: \tilde X_U \ra X_U$ with $(\tilde x, u ) \mapsto g(\tilde x)$ will themselves induce group homomorphisms $f^!_U: A^*(X) \ra A^*(X_U)$ and $g^*_U: A^*(X) \ra A^*(X_U)$.

Now $g_U: \tilde X_U \ra X$ is a smooth map and by \cite[Theorem 17.4.2]{fulton}  we have isomorphisms $$ A^p(\tilde X_U)\cong A^p(\tilde X_U \xrightarrow{id} \tilde X_U) \xrightarrow{\cdot [g_U]} A^{p-d}(\tilde X_U \ra X). $$
In particular, for the degrees on which $A^p(\tilde X_U) = A^{p}_{\G_m}(\mathcal{N}_XY \smallsetminus 0)\cong A^{p}(\tilde X)$, we have that $f_U^!=\gamma_U \cdot [g_U]=\gamma_U\cdot g^*_U$ for some $ \gamma_U \in A^{p}(X_U)$, is equivalent to say $f^!(\alpha)=\gamma_U \cdot g^*(\alpha)$ for some element $\gamma_U \in A^{p}(\tilde X)$.

Since the elements $\gamma_U$ must agree whenever $U$ has high enough dimension, they must coincide. Hence there exists a unique element $\gamma \in A^*(\tilde X)$ such that $f^!(\alpha)=\gamma \cdot g^*(\alpha)$.

 \end{proof}

\subsection{Specialization to the weighted normal cone}\label{specializationtonormalcone}

Analogously to \cite[Section 5.2]{fulton}, Quek and Rydh in \cite[Section 4.3]{quek-rydh-weighted-blow-up} construct a deformation to the weighted normal cone of $X$ in $Y$, which is a weighted affine bundle in our case. We will be using their construction to reduce our argument to the case where $Y$ a weighted affine bundle over $X$. 
A similar construction can be found in \cite[Section 2.3]{mustata-mustata}. 

Note that when given a weighted embedding that defines a weighted blow-up of smooth varieties, the weighted normal cone is an affine bundle, which we will denote $\mathcal{N}_XY$.

\begin{theorem} \label{specialization}
Let $X$, $Y$, $\tilde{X}$, $\tilde{Y}$ be as usual. Let $N=\mathcal{N}_X Y$ the weighted normal affine bundle of $X$ in $Y$ and $f_N:\tilde{N}\to N$ the weighted blow-up with the same weights as $f:\tilde{Y}\to Y$. Then the induced maps $f^!: A^*(X) \ra A^*(\tilde X)$ and $f_N^!: A^*(X) \ra A^*(\tilde X)$ coincide.
\end{theorem}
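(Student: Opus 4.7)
The plan is to use the weighted deformation to the normal cone of \cite{quek-rydh-weighted-blow-up} to construct a single family whose special and generic fibers are $f_N : \tilde N \to N$ and $f : \tilde Y \to Y$ respectively, and then deduce the equality of $f^!$ and $f_N^!$ from the compatibility of the refined Gysin pullback with flat restriction to fibers.

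First, I would invoke the construction of \cite[Section 4.3]{quek-rydh-weighted-blow-up} to obtain a flat family $\pi : M \to \A^1$ together with a closed embedding $X \times \A^1 \hookrightarrow M$ over $\A^1$, such that the fiber over $1$ recovers $X \hookrightarrow Y$ and the fiber over $0$ recovers the zero-section embedding $X \hookrightarrow N$. Then I would form the weighted blow-up $F : \tilde M \to M$ of $M$ along $X \times \A^1$ with the same weights $a_1, \ldots, a_d$. Since the weighted normal bundle of $X \times \A^1$ in $M$ is $N \times \A^1$, the fiber of $\tilde M \to \A^1$ over $1$ reproduces $f : \tilde Y \to Y$, the fiber over $0$ reproduces $f_N : \tilde N \to N$, and the exceptional divisor of $F$ is $\tilde X \times \A^1$.

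Next, following the notation of Section \ref{sectionnotation}, I would define the induced refined Gysin map $F^! : A^*(X \times \A^1) \to A^*(\tilde X \times \A^1)$ from the (non-Cartesian) square
\[
\begin{tikzcd}
\tilde X \times \A^1 \arrow[r] \arrow[d] & \tilde M \arrow[d, "F"]\\
X \times \A^1 \arrow[r] & M.
\end{tikzcd}
\]
Let $\jmath_t : X \hookrightarrow X \times \A^1$ and $\tilde \jmath_t : \tilde X \hookrightarrow \tilde X \times \A^1$ denote the inclusions of the fibers at $t \in \{0,1\}$. Both pullbacks $\jmath_t^*$ and $\tilde \jmath_t^*$ are isomorphisms, as they are the inverses of the flat pullbacks along the projections to $X$ and $\tilde X$. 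The heart of the argument is then to establish the specialization identity
\[
\tilde \jmath_t^* \circ F^! \;=\; f_t^! \circ \jmath_t^*, \qquad f_1 = f,\ f_0 = f_N,
\]
which, after cancelling the isomorphisms $\jmath_t^*$ and $\tilde \jmath_t^*$, will give $f^! = f_N^!$.

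The main obstacle I anticipate is the verification of this specialization identity in the refined (non-Cartesian) setting. In Fulton's framework, compatibility of refined Gysin pullback with flat base change (\cite[Theorem 6.2(a)]{fulton}) is standard when the square is Cartesian; here, however, we must pass through the reduction isomorphism $A^*(\tilde X \times \A^1) \cong A^*((X \times \A^1) \times_M \tilde M)$ used in Section \ref{sectionnotation}. The key local fact required is that the weighted blow-up commutes with flat base change along smooth morphisms, so that restricting $\tilde M$ to a fiber $M_t$ recovers the weighted blow-up of $M_t$ along $X$, and that this compatibility carries through to the reduction of the scheme-theoretic preimage. Granting this (which should be a direct consequence of the construction of weighted blow-ups in \cite{quek-rydh-weighted-blow-up}), the specialization identity follows, and the theorem is proved.
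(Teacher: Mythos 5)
Your proposal is correct and takes essentially the same route as the paper: both pass to the Quek--Rydh deformation to the weighted normal cone $M^o \to \A^1$, blow up $X \times \A^1$ in the total space, and deduce the equality from compatibility of the refined Gysin map with restriction to the fibers over $t=0$ and $t=1$ (the paper packages your specialization identity as the two factorizations $X \to M_t \to M^o$ and $X \to X\times\A^1 \to M^o$ of a single refined Gysin map $f_M^!$). Your explicit attention to the non-Cartesian square, the reduction isomorphism, and the compatibility of the weighted blow-up with restriction to fibers makes precise a step the paper leaves implicit.
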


\begin{proof}

Let $M^o$ be the deformation to the weighted normal cone as defined in \cite[Definition 4.3.3]{quek-rydh-weighted-blow-up} and let $\tilde{M^o}$ be the weighted blow-up of $X \times \A^1$ in $M^o$  with the same weights as $f$, i.e.,the weighted blow-up induced by the weighted embedding in \cite[Proposition 4.3.3]{quek-rydh-weighted-blow-up}. Let $M_t$ and $\tilde{M_t}$ respectively, be the fibers over $t$. Let $Z=X \times_Y \tilde Y$, note that $Z=X \times_N \tilde N$. Then we have the following diagram. 

$$
\begin{tikzcd}[sep = .4cm]
&\tilde{X}\times\A^1 \arrow[rr]\arrow[ddrr, "\textstyle Z" description] && Z\times \A^1 \arrow[rr]\arrow[dd]&& \tilde{M^o} \arrow[dd,"f_M"]\\
\tilde{X} \arrow[rr]\arrow[ddrr]\arrow[ur] && \contour{white}{$Z$} \arrow[rr]\arrow[dd]\arrow[ru] && \tilde{M_t} \arrow[ru] \\
&&& X \times\A^1 \arrow[rr]&& M^o\\
&& X \arrow[rr]\arrow[ru] && M_t \arrow[ru]\arrow[from=uu, crossing over]
\end{tikzcd}
$$

By looking at the composition $X \to M_t\to M^o$ in the subdiagram

$$\begin{tikzcd}
Z \arrow[r] \arrow[d] & \tilde{M_t} \arrow[r] \arrow[d] & \tilde{M^o} \arrow[d, "f_M"] \\
X \arrow[r] & M_t \arrow[r] & M^o
\end{tikzcd} $$
we see that for $t\neq 0$ we have that $f_M^!:A^*(X)\to A^*(Z)$ is precisely $f^!$, and for $t=0$ it is precisely $f_N^!$.
Now looking at the composition $X\to X\times \A^1\to M^o$ in the subdiagram
$$\begin{tikzcd}
Z \arrow[r] \arrow[d] & Z\times \A^1 \arrow[r] \arrow[d] & \tilde{M^o} \arrow[d, "f_M"] \\
X \arrow[r] & X\times \A^1 \arrow[r] & M^o
\end{tikzcd} $$
we see that $f_M^!:A^*(X)\to A^*(Z)$  is the same for all $t$.

\end{proof}

\subsection{The special case of $[\A^d/T]$}\label{The special case}

Let us now study the particular case of a point in the affine space over the torus.

$$\begin{tikzcd}
\mathcal{P}(a_1,...a_d) \arrow[r] \arrow[dr,"g"] & 0 \times_{\A^d} Bl_{a_1,...,a_d}\A^d \arrow[r,"j"]\arrow[d,"h"]&
Bl_{a_1,...,a_d}\A^d \arrow[d,"f"]\\ &
0 \arrow[r,"i"]&
\A^d
\end{tikzcd}$$

In order to explicitly give a formula for $f^!$ we need presentations for the equivariant Chow rings $A^*_T(-)$.  

Now $A^*_T(0) \cong A^*_T(\A^d) \cong \Z[x_1,...,x_d]$. Details about $A^*_T(0)$ can be found in \cite{EG} and in \cite{Iwanari} for equivariant Chow rings of toric stacks.

Let us first observe that, since $\mathcal{P}(a_1,...,a_d)$ is the reduction of $0 \times_{\A^d} Bl_{a_1,...,a_d}\A^d$, there is an isomorphism of Chow rings $A^*_T(0 \times_{\A^d} Bl_{a_1,...,a_d}\A^d) \cong A^*_T(\mathcal{P}(a_1,...,a_d))$.

Moreover $Bl_{a_1,...,a_d}\A^d$ is a line bundle over $\mathcal{P}(a_1,...,a_d)$, in fact it is the total space of $\mathcal{O}_{\mathcal{P}(a_1,...,a_d)}(-1)$, and we have the isomorphism $A^*_T(Bl_{a_1,...,a_d}\A^d) \cong A^*_T(\mathcal{P}(a_1,...,a_d))$

We are left with computing $A^*_T(\mathcal{P}(a_1,...,a_d))$.

\begin{lemma}{$A^*_{T}(\mathcal{P}(a_1,...,a_d)) \cong \frac{\Z[x_1,...,x_d,t]}{P(t)}$} where $P(t):=\prod_{i=1}^d (x_i+ta_i)$
\end{lemma}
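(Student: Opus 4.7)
The plan is to reduce this computation to the non-equivariant weighted projective bundle formula (Theorem \ref{WeighedProjectiveBundleFormula}) via the Edidin--Graham/Borel construction. Choose $U=(\A^N\smallsetminus 0)^d$ with the standard $T=\G_m^d$ action; then $U/T=(\p^{N-1})^d$ and $A^*_T(\mathrm{pt})=\Z[x_1,\dots,x_d]$ in codimensions less than $N$, where $x_i$ is the hyperplane class of the $i$-th factor. Fixing such a large $N$, by Definition \ref{definition of G-equivariant Chow groups} it suffices to identify $A^*\bigl(\mathcal{P}(a_1,\dots,a_d)\times^T U\bigr)$ through the appropriate range.

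Next I would identify the mixed quotient explicitly: writing $\mathcal{P}(a_1,\dots,a_d)=[(\A^d\smallsetminus 0)/\G_m]$ and noting that the $T$- and $\G_m$-actions on $\A^d$ commute, the mixing space is
$$\mathcal{P}(a_1,\dots,a_d)\times^T U \;=\; \bigl[((\A^d\smallsetminus 0)\times^T U)/\G_m\bigr].$$
The space $(\A^d\smallsetminus 0)\times^T U$ is precisely the complement of the zero section of the split vector bundle $E=\bigoplus_{i=1}^d L_i$ over $(\p^{N-1})^d$, where $L_i$ is the pullback of $\mathcal{O}(1)$ from the $i$-th factor; the residual $\G_m$ acts on the $i$-th summand with weight $a_i$, so $E$ is a weighted vector bundle. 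Hence the mixed quotient is the weighted projective stack bundle $\mathcal{P}(E)$, and Theorem \ref{WeighedProjectiveBundleFormula} applies to give
$$A^*(\mathcal{P}(E))\;\cong\;\frac{A^*((\p^{N-1})^d)[t]}{c^{\G_m}_{top}(E)(t)}.$$

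The remaining computation is the $\G_m$-equivariant top Chern class. Since $E$ is already a direct sum of line bundles of known weights, Proposition \ref{remarks}(2) (or directly Definition \ref{affine Chern class}) together with Lemma \ref{eclb} yields
$$c^{\G_m}_{top}(E)(t)\;=\;\prod_{i=1}^d c^{\G_m}_1(L_i^{a_i})\;=\;\prod_{i=1}^d\bigl(c_1(L_i)+a_i t\bigr)\;=\;\prod_{i=1}^d (x_i+a_i t)\;=\;P(t).$$
Substituting and letting $N\to\infty$ (so that the truncation $(x_1^N,\dots,x_d^N)$ disappears in each fixed codimension) gives the claimed presentation $A^*_T(\mathcal{P}(a_1,\dots,a_d))\cong \Z[x_1,\dots,x_d,t]/P(t)$.

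The only real obstacle is bookkeeping: making sure that Theorem \ref{WeighedProjectiveBundleFormula}, which was proved for weighted affine bundles over an algebraic space, can be invoked equivariantly. This is handled cleanly by the Borel approximation above because the approximation $\mathcal{P}(E)$ is an honest projective stack bundle over the algebraic space $(\p^{N-1})^d$, and the calculation in the theorem respects the increasing filtration of approximations, so no stack-level extension of the theorem is required.
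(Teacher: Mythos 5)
Your proof is correct. It differs from the paper's argument in packaging rather than in substance: the paper identifies $A^*_T(\mathcal{P}(a_1,\dots,a_d))$ with $A^*_{T\times\G_m}(\A^d\smallsetminus 0)$ and runs the localization sequence $A^*_{T\times\G_m}(0)\to A^*_{T\times\G_m}(\A^d)\to A^*_{T\times\G_m}(\A^d\smallsetminus 0)\to 0$ directly, identifying the image of the first map with the ideal generated by the $T\times\G_m$-equivariant top Chern class $\prod_i(x_i+a_it)$ --- in effect applying the equivariant form of Lemma \ref{lemma 2.2} in a single step, with $T$ and $\G_m$ treated on an equal footing. You instead Borel-approximate only the $T$-direction, realize the mixed space as the weighted projective stack bundle $\mathcal{P}\bigl(\bigoplus_i L_i^{a_i}\bigr)$ over $(\p^{N-1})^d$, and quote Theorem \ref{WeighedProjectiveBundleFormula} verbatim; your identification of $L_i$ as $\mathrm{pr}_i^*\mathcal{O}(1)$ with residual weight $a_i$ is consistent with the paper's conventions (Lemma \ref{eclb}), so the Chern class computation $\prod_i(x_i+a_it)$ matches. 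Both arguments bottom out in the same self-intersection computation; yours buys the convenience of reusing the already-proven bundle formula over an algebraic-space base without re-deriving anything $T$-equivariantly, at the cost of the two-stage approximation bookkeeping and the limit $N\to\infty$, which the paper's direct equivariant argument avoids.
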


\begin{proof} By construction $\mathcal{P}(a_1,...,a_d)=[\A^d \smallsetminus 0/\G_m]$ and the actions of $\G_m$ and $T$ on $\A^d \smallsetminus 0$ commute. In particular $A^*_T(\mathcal{P}(a_1,...,a_d)) \cong A^*_T([A^d \smallsetminus 0/\G_m]) \cong A^*_{T \times \G_m}(\A^d \smallsetminus 0)$.

Similarly to the computation above, $A^*_{T \times \G_m}(0) \cong A^*_{T \times \G_m}(\A^d) \cong \Z[x_1,...,x_d,t]$ where $x_1,...,x_d$ are given by the action of $T$ and $t$ is given by the action of $\G_m$. Finally, the image of the first map in the localization sequence: $$A^*_{T \times \G_m}(0) \ra A^*_{T \times \G_m}(\A^d) \ra A^*_{T \times \G_m}(\A^d \smallsetminus 0) \ra 0$$ is generated by $P(t):=\prod_{i=1}^d (x_i+ta_i)$. Indeed the top Chern class of the $T \times \G_m$-equivariant bundle splits along each component of $\A^d$. On the $i$-th component of $\A^d$ the $i$-th component of $T$ acts with weight $1$ and the other components of $T$ act with weight $0$, while $\G_m$ acts with weight $a_i$.

Therefore $A^*_{T}(\mathcal{P}(a_1,...,a_d))$ and $A^*_{T}(Bl_{a_1,...,a_d}\A^d)$ are both isomorphic to $\frac{\Z[x_1,...,x_d,t]}{P(t)}$.\end{proof}

\begin{theorem} \label{computationforT} 
Let $f: [Bl_{a_1,...,a_d}\A^d/T] \ra [\A^d/T]$ be the blow-up of $[0/T]$ in $[\A^d/T]$ with weights $a_1,...,a_d$. Then
$$f^!(1)=\left( \frac{c_{top}^{\G_m}([\A^n/T])(t)-c_{top}^{\G_m}(\A^n/T)(0)}{t} \right)$$
\end{theorem}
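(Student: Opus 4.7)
The plan is to leverage the explicit equivariant Chow ring presentations just computed, reducing the problem to an equation in $\Z[x_1,\ldots,x_d,t]/(P(t))$, with $P(t) = \prod_{i=1}^d(x_i + a_i t)$. By Lemma \ref{structure of the map}, $\gamma := f^!(1)$ is a single class in $A^*_T(\tilde X)$ to be determined, and identifying $\gamma$ with $(P(t) - P(0))/t$ will finish the proof.

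The key input I will use is the push-pull identity
\[
j_*\bigl(f^!(1)\bigr) \;=\; f^*\bigl(i_*(1)\bigr) \qquad \text{in } A^*_T(\tilde Y),
\]
the weighted analog of Fulton's classical blow-up formula (Fulton, Proposition 6.7(a)), valid here for the refined Gysin set up in Section \ref{sectionnotation}. Given it, both sides are directly computable in the presentation. On the right, $i_*(1)$ is the $T$-equivariant class of the origin in $\A^d$, i.e.\ the top $T$-equivariant Chern class $\prod_i x_i = P(0)$ of the standard representation; under the inclusion $f^*\colon \Z[x_1,\ldots,x_d] \hookrightarrow \Z[x_1,\ldots,x_d,t]/(P(t))$ it remains $P(0)$. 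On the left, the identification $A^*_T(\tilde Y) \cong A^*_T(\tilde X)$ coming from the line-bundle projection $\tilde Y \to \tilde X$ (the weighted blow-up is the total space of $\mathcal{O}_{\mathcal{P}(a)}(-1)$), combined with the projection formula and $j_*(1) = [\tilde X] = -t$, yields $j_*(\gamma) = -t\gamma$.

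Equating yields $-t\gamma \equiv P(0) \pmod{P(t)}$, equivalently $t\gamma \equiv P(t) - P(0) \pmod{P(t)}$. Since $P(t) - P(0)$ has no constant term in $t$, it is divisible by $t$; and $t$ is a non-zero-divisor in $\Z[x_1,\ldots,x_d,t]/(P(t))$ (any relation $t\cdot r \equiv 0$ evaluates at $t=0$ to $r(x,0)\cdot\prod x_i = 0$, forcing $t \mid r$, and hence $r \in (P(t))$). Cancelling then gives $\gamma = (P(t) - P(0))/t$, as claimed. The main obstacle is justifying the push-pull identity in our non-Cartesian blow-up square; this requires invoking the refined-Gysin construction with the reduction isomorphism from Section \ref{sectionnotation}, but once granted the rest is a routine calculation in the explicit polynomial ring.
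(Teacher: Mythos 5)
Your proposal is correct and follows essentially the same route as the paper: both rest on the push-pull identity $j_*f^!(1)=f^*i_*(1)$ (the paper cites Vistoli's compatibility theorem for the refined Gysin map), identify $i_*(1)=\prod x_i=P(0)$ and $j_*$ as multiplication by $-t$, and cancel $t$ as a non-zero-divisor in $\Z[x_1,\dots,x_d,t]/(P(t))$. The only quibble is that your verification of the non-zero-divisor claim is slightly garbled (from $tr=sP(t)$ one should evaluate the \emph{cofactor} $s$ at $t=0$ to conclude $t\mid s$, hence $r\in(P(t))$), but the claim itself is true and is exactly what the paper uses.
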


\begin{proof}
By \cite[Theorem 3.12]{vistoli}
$f^!$ must satisfy $f^*i_*=j_*f^!$, making the following diagram commute. $$
\begin{tikzcd}
A^*(\mathcal{P}(a_1,...,a_d)) \arrow[r,"j_*"]& 
A^*(Bl_{a_1,...,a_d}\A^d )\\ 
A^*(0) \arrow[r,"i_*"]\arrow[u, "f^!(1)\cdot g^*"]&
A^*(\A^d) \arrow[u, "f^*"]
\end{tikzcd} $$
Now $i_*: A^*_T(0) \ra A^*_T(\A^d)$ is just the multiplication by the top equivariant Chern class of the bundle $\A^d$ over $0$. Specifically $i_*(\alpha)=\alpha \cdot x_1...x_d$. \\
Similarly, $j_*$ is the image of the top equivariant Chern class of the bundle $Bl_{a_1,...,a_d}\A^d$ over $\mathcal{P}(a_1,...,a_d)$, so we have that $j_*$ is multiplication by $-t$.\\
Therefore we must have, $$f^*i_*(\alpha)=\alpha \cdot x_1\cdots x_d=\alpha \cdot c_{top}^{\G_m}([\A^n/T])(0)= f^!(1) \alpha (-t)=j_*f^!(\alpha)$$ 

and since $t$ is not a zero divisor in $A^*_T(\mathcal{P}(a_1,...,a_d))$, we must have $$f^!(1)=\frac{-c_{top}^{\G_m}([\A^n/T])(0)}{t}=\frac{c_{top}^{\G_m}([\A^n/T])(t)-c_{top}^{\G_m}([\A^n/T])(0)}{t},$$ as needed. \end{proof}

\subsection{A fomula for the Gysin homomorphism}

\begin{theorem}\label{Gysin homomorphism for weighted blow-up}
Let $X$, $Y$, $\tilde{X}$, $\tilde{Y}, f$ be as usual. Let us identify $A^*(\tilde X) \cong A^*(X)[t]/P(t)$ with $P(t)=c_{top}^{\G_m}(\mathcal{N}_XY)(t)$.
Then we have the following formula for the Gysin homomorphism $f^!:A^*(X) \ra A^*(\tilde X)$. $$
 f^!(\alpha)=\frac{P(t)-P(0)}{t}\alpha.$$
\end{theorem}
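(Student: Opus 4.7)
The plan is to reduce this statement to the special case of $f\colon [Bl_{a_1,\ldots,a_d}\A^d/T] \to [\A^d/T]$ computed in Theorem \ref{computationforT}, via two successive reductions. First, by Theorem \ref{specialization}, we may replace $Y$ with the weighted normal bundle $N = \mathcal{N}_X Y$, embedding $X$ as the zero section; under this replacement neither $f^!$ nor the polynomial $P(t)$ changes. By Lemma \ref{structure of the map}, $f^!(\alpha) = g^*(\alpha) \cdot f^!(1)$, so it is enough to establish the single equality
$$f^!(1) \;=\; \frac{P(t)-P(0)}{t} \quad\text{in } A^*(\tilde X).$$

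Next, apply the splitting principle (Theorem \ref{splitting principle}) to obtain $\pi\colon X'' \to X$ with $\pi^*\colon A^*(X) \hookrightarrow A^*(X'')$ injective, and such that $E'' := \pi^* N$ splits as a direct sum $L_1^{a_1} \oplus \cdots \oplus L_d^{a_d}$ of $\G_m$-equivariant line bundles of weights $a_1,\ldots,a_d$. The weighted blow-up $f''\colon \tilde{E''} \to E''$ is the base change of $\tilde N \to N$ along $\pi$. Since $\pi$ is a composition of affine bundles (the $\Uan$-bundle $X' \to X$ and the $\Gln/T$-bundle $X'' \to X'$), flat base change for Gysin maps gives $\pi^* f^!(1) = (f'')^!(1)$, while naturality of Chern classes gives $\pi^* P(t) = P''(t) := c_{top}^{\G_m}(E'')(t)$. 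By injectivity of $\pi^*$, it then suffices to prove the formula for $f''$ in place of $f$.

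Finally, the split bundle $E''$ is classified by a map $X'' \to BT$ with $T$ the standard maximal torus of $\Gln$, and $\G_m$ acting on the universal total space $[\A^d/T]$ with the prescribed weights $a_1,\ldots,a_d$. The weighted blow-up $f''\colon \tilde{E''} \to E''$ is the base change along $X'' \to BT$ of the universal weighted blow-up $[Bl_{a_1,\ldots,a_d}\A^d/T] \to [\A^d/T]$, and the formula for $(f'')^!(1)$ then follows by base-change compatibility of the Gysin map applied to the explicit computation in Theorem \ref{computationforT}. Combined with the previous two reductions, this gives the claimed formula.

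The main technical obstacle is the naturality bookkeeping, namely verifying that $f^!$ is compatible with each of the base changes used. For the splitting-principle step, this is the standard base-change compatibility with flat pullback. For the first reduction, it is the content of Theorem \ref{specialization}, proved by a deformation-to-the-weighted-normal-cone argument. Once these compatibilities are in hand, the theorem follows immediately from the $T$-equivariant computation on the universal weighted blow-up of $\A^d$.
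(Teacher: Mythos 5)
Your proposal is correct and follows essentially the same route as the paper: reduce to $f^!(1)$ via Lemma \ref{structure of the map}, degenerate to the weighted normal bundle via Theorem \ref{specialization}, apply the splitting principle of Theorem \ref{splitting principle}, and pull back the universal computation of Theorem \ref{computationforT} over $[\A^d/T]$. The only quibble is your parenthetical describing $X''\to X'$ as an affine bundle --- it is a $\Gln/T$-bundle (a flag bundle, and injectivity on Chow rings comes from Totaro's argument rather than homotopy invariance) --- but since only smoothness is needed for the base-change compatibility of $f^!$, this does not affect the argument.
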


\begin{proof}
With the presentation of $A^*(\tilde X)$ above, the map $g^*$ is the natural inclusion of $A^*(X)$ in $A^*(X)[t]/P(t)$ and, by Lemma \ref{structure of the map} we only need to show $$f^!(1)=\frac{P(t)-P(0)}{t}.$$

By Theorem \ref{specialization} we can assume that $Y$ is a weighted affine bundle over $X$. By the splitting principle in Theorem \ref{splitting principle} it is enough to prove the equality for the pullback $X''$. 
Since blow-ups commute with base change, the blow-up $f'': \tilde Y'' \ra Y''$ sits in the commutative diagram 
$$\begin{tikzcd}
\tilde{X''} \arrow[rr, "\tilde \phi"]\arrow[dd]\arrow[dr]
&& \left[\mathcal{P}(a_1,...,a_n)\right/T] \arrow[dd]\arrow[dr]
\\
&\tilde Y ''&& \left[Bl_{a_1,...,a_n}\A^n/T\right] \arrow[dd]\arrow[from=ll,crossing over]\\  X''\arrow[rr, "\phi"]\arrow[dr]&& BT \arrow[dr]\\
& Y'' \arrow[rr]\arrow[from=uu, crossing over] && \left[\mathbb{A}^n/T\right]
\end{tikzcd}$$

induces a commutative diagram of Chow groups

$$\begin{tikzcd}
A^*(\tilde{X''}) \arrow[from=dd, "(f'')^!"]\arrow[dr]
&& A^*(\left[\mathcal{P}(a_1,...,a_n)\right/T]) \arrow[ll, "\tilde \phi^*"]\arrow[from=dd]\arrow[dr]
\\
&A^*(\tilde Y '') && A^*(\left[Bl_{a_1,...,a_n}\A^n\right/T]) \arrow[from=dd]\arrow[ll,crossing over]\\  A^*(X'')\arrow[from=rr, "\phi^*"]\arrow[dr]&& A^*(BT) \arrow[dr]\\
& A^*(Y'') \arrow[from=rr]\arrow[uu, crossing over] && A^*(\left[\mathbb{A}^n/T\right]).
\end{tikzcd}$$

Since equivariant Chern classes commute with pullbacks and $Y''$ is a vector bundle over $X''$, by Theorem \ref{computationforT} the following holds $$(f'')^!(1)=(f'')^!(\phi^*(1))=\tilde\phi ^* \left( \frac{c_{top}^{\G_m}([\A^n/T])(t)-c_{top}^{\G_m}(\A^n/T)(0)}{t} \right) =$$ $$=\left( \frac{c_{top}^{\G_m}(\tilde \phi ^*[\A^n/T])(t)-c_{top}^{\G_m}(\tilde \phi ^*[\A^n/T])(0)}{t} \right) =\left( \frac{c_{top}^{\G_m}(Y'')(t)-c_{top}^{\G_m}(Y'')(0)}{t} \right)$$ which is the desired difference quotient. 

\end{proof}

\section{The Chow ring of a weighted blow-up}

In this section we generalize the key sequence in \cite[Proposition 6.7(e)]{fulton} and then use it to compute a formula for the Chow ring of a weighted blow-up.

Let us recall the notation. Let $f:\tilde Y \ra Y$ the weighted blow-up of $Y$ at $X$. Let $\tilde X$ be the exceptional divisor. Then we have the commutative diagram
$$\begin{tikzcd}
\tilde X \arrow[r, "j"] \arrow[d, "g"] & \tilde Y \arrow[d, "f"] \\
X \arrow[r, "i"] & Y.
\end{tikzcd}$$

\subsection{The Grothendieck sequence}
\begin{theorem}[Key sequence]\label{sequence}
Let $X$, $Y$, $\tilde{X}$, $\tilde{Y}$, $f$, $i$, $j$ be as above, then we have the following exact sequence of Chow groups
$$A^*(X) \xrightarrow{(f^!, - i_*)} A^*(\tilde X) \oplus A^*(Y) \xrightarrow{j_*+f^*} A^*(\tilde Y) \ra 0.$$
Further, if we use rational coefficients, then this becomes a split short exact sequence with $g_*$ left inverse to $(f^!, -i_*)$.
$$0\to A^*(X,\mathbb{Q}) \xrightarrow{(f^!, -i_*)} A^*(\tilde X,\mathbb{Q}) \oplus A^*(Y,\mathbb{Q}) \xrightarrow{j_*+f^*} A^*(\tilde Y,\mathbb{Q}) \ra 0.$$
\end{theorem}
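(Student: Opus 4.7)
My plan is to verify exactness at each of the three positions separately, and then for the rational refinement produce an explicit retraction via $g_*$. Surjectivity follows from applying the localization sequence of Proposition \ref{remarks}(3) to $j:\tilde X \hookrightarrow \tilde Y$: since $\tilde Y \smallsetminus \tilde X \cong Y \smallsetminus X$ and $A^*(Y) \twoheadrightarrow A^*(Y\smallsetminus X)$ is surjective, any $\xi \in A^*(\tilde Y)$ can be matched on the complement by some $f^*(\beta)$, and the remainder $\xi - f^*(\beta)$ is supported on $\tilde X$ and therefore equal to $j_*(\alpha)$ for some $\alpha$. The composition vanishes by the standard compatibility $j_* f^! = f^* i_*$ of the refined Gysin map (\cite[Theorem 3.12]{vistoli}), already invoked in the proof of Theorem \ref{computationforT}.

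For middle exactness, suppose $j_*(\alpha)+f^*(\beta)=0$. Restricting to $Y\smallsetminus X$ forces $\beta = -i_*(\gamma_0)$ for some $\gamma_0 \in A^*(X)$, and substituting yields $j_*(\alpha - f^!(\gamma_0))=0$. The task therefore reduces to the sub-claim $\ker(j_*) \subseteq f^!(\ker i_*)$. For $\eta$ with $j_*(\eta)=0$, functoriality of pushforward yields $i_*(g_*(\eta)) = f_* j_*(\eta) = 0$, so $\gamma' := g_*(\eta)$ lies in $\ker(i_*)$. The identity $\eta = f^!(\gamma')$ is then checked by expanding $\eta$ in the $A^*(X)$-basis of Theorem \ref{WeighedProjectiveBundleFormula}, rewriting $j_*(g^*(\eta_k) t^k) = (-1)^k [\tilde X]^k \cdot j_*(g^*(\eta_k))$ via the projection formula applied to $t = -j^* j_*(1)$, and matching coefficients using the relation $P(t)=0$. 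Then $\gamma := \gamma_0+\gamma'$ is the desired preimage of $(\alpha,\beta)$.

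For the rational splitting, define $L:A^*(\tilde X,\mathbb{Q})\oplus A^*(Y,\mathbb{Q})\to A^*(X,\mathbb{Q})$ by $L(\alpha,\beta)=g_*(\alpha)$. Using the projection formula together with Theorem \ref{Gysin homomorphism for weighted blow-up}, one computes $L\circ(f^!,-i_*)(\gamma)=g_*\bigl(\tfrac{P(t)-P(0)}{t}\bigr)\cdot\gamma$. Since the weighted projective bundle $\tilde X \to X$ has relative dimension $n-1$ (with $n=\sum n_i$), we have $g_*(t^k)=0$ for $k<n-1$ and $g_*(t^{n-1})=1/\prod_i a_i^{n_i}$ rationally, while the coefficient of $t^{n-1}$ in $\tfrac{P(t)-P(0)}{t}$ is exactly $\prod_i a_i^{n_i}$. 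Hence $L\circ(f^!,-i_*)=\mathrm{id}$, which gives injectivity of $(f^!,-i_*)$ and, combined with the surjectivity already shown, the rational split short exact sequence.

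The main obstacle is the sub-claim $\ker(j_*) \subseteq f^!(\ker i_*)$ in the middle-exactness step. Rationally it would follow from the retraction $L$ above; but to obtain the statement with integer coefficients, the non-Cartesian nature of the blow-up square blocks a direct transport of Fulton's classical diagram chase, and one must carefully extract the required coefficient identities from $j_*(\eta)=0$ using the relation $P(t)=0$ in $A^*(\tilde X)$ — this is the essential new weighted input.
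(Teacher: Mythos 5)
Your surjectivity argument, your reduction of middle exactness to the sub-claim $\ker(j_*)\subseteq f^!(\ker i_*)$, and your computation of the rational retraction $g_*\circ(f^!,-i_*)=\mathrm{id}$ all match the paper and are fine. The gap is in your proof of the sub-claim itself, and it is a genuine one, not a detail. First, there is no ``$A^*(X)$-basis'' $1,t,\dots,t^{n-1}$ of $A^*(\tilde X)$ in the weighted setting: the relation $P(t)=c^{\G_m}_{top}(\mathcal{N}_XY)(t)$ has leading coefficient $\prod_i a_i^{n_i}$, not $1$, so $A^*(X)[t]/P(t)$ is not free over $A^*(X)$ and is not even generated by $1,\dots,t^{n-1}$ (e.g.\ $t^2\in\Z[t]/(2t^2)$ is not an integral combination of $1$ and $t$). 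Second, and more seriously, the identity $\eta=f^!(g_*\eta)$ for $\eta\in\ker(j_*)$ is false integrally. Take $X=E$ an elliptic curve, $Y$ the total space of $\mathcal{O}_E\oplus\mathcal{O}_E$ with weights $(1,2)$, so $A^*(\tilde X)=A^*(E)[t]/(2t^2)$, $f^!(\alpha)=2t\alpha$, and $j_*$ is multiplication by $-t$. Let $\gamma=[P]-[O]$ with $P$ a nonzero $2$-torsion point; by divisibility of $E(k)$ write $\gamma=2\gamma'$. Then $\eta=\gamma t$ satisfies $j_*\eta=-\gamma t^2=-\gamma'(2t^2)=0$ and $\eta\neq 0$, yet $g_*\eta=\tfrac12\gamma=0$ in $A^*(E,\mathbb{Q})$ (as $\gamma$ is torsion), so $f^!(g_*\eta)=0\neq\eta$. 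The correct preimage is $\gamma'$, which $g_*$ does not see; note also that $g_*$ for these stacky fibers is only defined with $\mathbb{Q}$-coefficients, so it cannot be the engine of an integral statement.

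What actually proves $\ker(j_*)\subseteq f^!(\ker i_*)$ in the paper is the pair of localization sequences extended by higher Chow groups: $\ker(j_*)=\mathrm{im}\bigl(\tilde\delta_1:A^*(U,1)\ra A^*(\tilde X)\bigr)$ and $\ker(i_*)=\mathrm{im}\bigl(\delta_1:A^*(U,1)\ra A^*(X)\bigr)$ for $U\cong\tilde Y\smallsetminus\tilde X\cong Y\smallsetminus X$, together with the compatibility $f^!\circ\delta_1=\tilde\delta_1$ of the refined Gysin map with the boundary maps. This is exactly the input your coefficient-matching step is trying to replace, and in the example above it is the class in $A^*(U,1)$ exhibiting $\gamma'$ as a boundary that produces the preimage. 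You should either import that mechanism (the paper phrases it as exactness of the total complex of the double complex, followed by a diagram chase trimming the $A^*(U)$ and $A^*(U,1)$ terms) or find another argument; the one proposed cannot be repaired as stated.
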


\begin{proof}
 To prove exactness let us look at the double complex of higher Chow groups $$\begin{tikzcd}
\cdots A^*(U, 1) \arrow[r, "\tilde \delta_1"] & A^*(\tilde{X}) \arrow[r, "j_*"] & A^*(\tilde{Y}) \arrow[r]  & A^*(U) \arrow[r] & 0\\ \cdots A^*(U, 1) \arrow[u, equal]\arrow[r, "\delta_1"] &
A^*(X) \arrow[r]\arrow[u, "f^!"] & A^*(Y) \arrow[r]\arrow[u,"f^*"] & A^*(U) \arrow[r]\arrow[u,equal] & 0
\end{tikzcd}$$ where $U \cong \tilde Y \smallsetminus \tilde X \cong Y \smallsetminus X$.

Since both of the complexes are exact, the total complex

$$\cdots A^*(U, 1) \oplus A^*(X) \ra A^*(\tilde X) \oplus A^*(Y) \ra A^*(\tilde Y) \oplus A^*(U) \ra A^*(U)  \ra 0$$ is also exact.

Let us prove that the map $A^*(\tilde X) \oplus A^*(Y) \xrightarrow{j_*+f^*} A^*(\tilde Y)$ is surjective.  
Let $\alpha$ be any cycle in $A^*(\tilde Y)$, $\bar{\alpha}$ the restriction of $\alpha$ to $A^*(U)$, and $\gamma\in A^*(Y)$ any cycle that restricts to $\bar{\alpha}$ in $A^*(U)$. By commutativity, $\alpha-f^*(\gamma)$ restricts to 0 in $A^*(U)$ and must be in the image of $j_*$. So $\alpha$ is in the image of $j_*+f^*$.
 Therefore the complex 
$$\cdots A^*(U, 1) \oplus A^*(X) \ra A^*(\tilde X) \oplus A^*(Y) \ra A^*(\tilde Y) \ra 0$$ is still exact. 

Moreover, the image of the map $A^*(U,1) \oplus A^*(X) \xrightarrow{(-\tilde \delta_1+f^!, -i_*)} A^*(\tilde X) \oplus A^*(Y)$ coincides with the one of $A^*(X) \xrightarrow{(f^!, -i_*)} A^*(\tilde X) \oplus A^*(Y)$. Indeed, let $(\tilde x, y )=(-\tilde \delta_1(u)+f^!(x), -i_*(x))$ and let $x'=x+\delta_1(u)$. Then $f^!(x')=f^!(x)-f^!(\delta_1(u))=f^!(x)-\tilde \delta_1(u)=\tilde x$ and $i_*(x')=i_*(x)-i_*(\delta_1(u))=i_*(x)=y$.

It follows that $ker(j_*+f^*)=Im(f^!,-i_*)$ and that 

$$A^*(X) \xrightarrow{(f^!, - i_*)} A^*(\tilde X) \oplus A^*(Y) \xrightarrow{j_*+f^*} A^*(\tilde Y) \ra 0$$
is exact.

Lastly, if we use rational coefficients then there is a left inverse of $(f^!, -i^*)$ given by $(\alpha, \beta) \mapsto g_*(\alpha)$.
Indeed, let $x \in A^*(X)$, then $g_*(f^!(x))=g_*(\delta g^*(x))=g_*(\delta) x$ with $\delta$ the difference quotient as in Theorem \ref{Gysin homomorphism for weighted blow-up}. Now $\delta$ is a degree $n-1$ polynomial in $t$, of which only the leading term $a_1 \cdots a_n t^{n-1}$ will survive the pushforward. We only need to show that $g_*(t^{n-1})=\frac{1}{a_1 \cdots a_n}$. 
It is enough to verify this when $X$ is a point. Notice that $a_it=x_i$, where the $x_i$ are the usual coordinate divisors, so $a_1\cdots a_{n-1}t^{n-1}=x_1\cdots x_{n-1}$ which is a $B\mu_{a_n}$ and so pushes forward to $\frac{1}{a_n}$, thus $g_*(t^{n-1})=\frac{1}{a_1\cdots a_n}$
\end{proof}

\begin{example} \label{non exactness} To see why the sequence with integer coefficients is not short exact,
let us consider $X$ an elliptic curve in $Y=\p^2$. Let $\tilde Y$ be the blow-up of $Y$ at $X$ with weight $2$. 
Let $P, Q \in X$ be distinct points of order $2$ and consider the difference $[P]-[Q] \in A^*(X)$. When pushed forward to $A^*(Y)$ via $i_*$, all points are rationally equivalent, hence $i_*([P]-[Q])=0$. On the other hand, $f^!$ is multiplication by $2$, so $f^!([P])=f^!([Q])=0$. But $[P] -[Q]$ is non zero, so $(f^!,-i_*)$ is not injective.
\end{example}

\begin{remark}
    Note that, when looking at the double complex in the proof of Theorem \ref{sequence} and taking the total complex, one defines a long exact sequence of higher Chow groups. The isomorphisms of higher Chow groups $$\alpha_i: A^*(\tilde Y \smallsetminus \tilde X, i) \ra A^*(Y \smallsetminus X, i)$$ allow us to delete $A^*(\tilde Y \smallsetminus \tilde X)$ and $A^*(Y \smallsetminus X)$ from the complex via a diagram chase analogous to the ones in the proof. 
    Then we can obtain the following long exact sequence 
    $$\cdots \ra A^*(X,i) \ra A^*(\tilde X, i) \oplus A^*(Y,i) \ra A^*(\tilde Y,i) \ra A^*(X, i-1) \ra \cdots$$
\end{remark}

\subsection{The Chow ring of a weighted blow-up}

\begin{theorem}[Chow ring of a weighted blow-up] \label{main theorem 4}Chow ring of a weighted blow-up] If $\tilde{Y}\to Y$ is a weighted blowup of $Y$ at a closed subvariety $X$, then the Chow ring $A^*(\tilde{Y})$ is isomorphic \emph{as a group} to the quotient $$A^*(\tilde Y)\cong\frac{(A^*(X)[t])\cdot t \oplus A^*(Y)}{( (({P(t)-P(0)})\alpha,-i_*(\alpha)), \forall\alpha\in A^*(X))}$$ with $P(t)=c_{top}^{\G_m}(\mathcal{N}_XY)(t)$ and $[\tilde X]=-t$.

The multiplicative structure on $A^*(\tilde Y)$ is induced by the multiplicative structures on $A^*(X)$ and $ A^*(Y)$ and by the pullback map in the following way $$(0,\beta) \cdot (t,0)=(i^*(\beta)t,0).$$

Equivalently $A^*(\tilde Y)$ can be expressed as a quotient of the fiber product 
$$ \frac{A^*(Y)\times_{A^*(X)} A^*(X)[t]}{((i_*\alpha, P(t)\alpha) \ \forall \alpha \in A^*(X))} $$ with $i^*: A^*(Y) \ra A^*(X)$ and $A^*(X)[t] \ra A^*(X)$ given by evaluating $t$ at $0$. 

\end{theorem}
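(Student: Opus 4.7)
The plan is to substitute the Weighted projective bundle formula (Theorem \ref{WeighedProjectiveBundleFormula}) and the Weighted key formula (Theorem \ref{Gysin homomorphism for weighted blow-up}) into the Key sequence (Theorem \ref{sequence}), then massage the result into the two forms stated. Concretely, Theorem \ref{sequence} gives $A^*(\tilde Y)$ as the cokernel of $(f^!, -i_*): A^*(X) \to A^*(\tilde X) \oplus A^*(Y)$; plugging in $A^*(\tilde X) \cong A^*(X)[t]/P(t)$ and $f^!(\alpha) = \frac{P(t)-P(0)}{t}\alpha$ realizes $A^*(\tilde Y)$ as
\[\frac{A^*(X)[t]/P(t) \oplus A^*(Y)}{\bigl\langle \bigl(\tfrac{P(t)-P(0)}{t}\alpha,\ -i_*\alpha\bigr) : \alpha \in A^*(X)\bigr\rangle}.\]

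To reach the first form in the theorem I would use the ``multiplication by $t$'' isomorphism $A^*(X)[t]/P(t) \xrightarrow{\cdot t} (A^*(X)[t]\cdot t)/(tP(t)\cdot A^*(X)[t])$, whose inverse ``divide by $t$'' is well defined since $A^*(X)[t]$ is $t$-torsion-free. This multiplies the Gysin relation through by $t$, converting $\frac{P(t)-P(0)}{t}\alpha$ into $(P(t)-P(0))\alpha$. The apparent additional ``denominator'' relations $(tP(t)\delta, 0)$ must then be derivable from the stated relations via the ring structure: using the multiplication rule $(0,\beta)(t,0) = (i^*\beta\cdot t, 0)$ (itself the projection formula for $j$ combined with $j^* f^* = g^* i^*$, valid since $f\circ j = i\circ g$) and the self-intersection formula $i^* i_* \alpha = P(0)\alpha$, multiplying the stated relation $((P(t)-P(0))\alpha, -i_*\alpha)=0$ by $(t,0)$ produces $(tP(t)\alpha, 0) = 0$, and iterating yields $(t^kP(t)\alpha, 0)=0$ for every $k \geq 1$. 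The identification $[\tilde X] = -t$ reflects $j^*[\tilde X] = -t$, coming from $\mathcal{O}_{\tilde X}(1) = \mathcal{O}_{\tilde Y}(-\tilde X)|_{\tilde X}$.

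For the equivalent fiber product presentation, the self-intersection formula $i^* i_* \alpha = P(0)\alpha = P(t)\alpha|_{t=0}$ shows that $(i_*\alpha, P(t)\alpha)$ lies in $A^*(Y) \times_{A^*(X)} A^*(X)[t]$. The group isomorphism
\[A^*(Y) \times_{A^*(X)} A^*(X)[t] \xrightarrow{\sim} A^*(Y) \oplus A^*(X)[t]\cdot t,\quad (\beta, h(t)) \mapsto (\beta, h(t)-i^*\beta),\]
carries $(i_*\alpha, P(t)\alpha)$ to $(i_*\alpha, (P(t)-P(0))\alpha)$, identifying (up to a sign on $\alpha$) the fiber product presentation with the first one. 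The main obstacle will be showing that the ``implicit'' projective-bundle relations $(tP(t)\delta, 0)$ are indeed absorbed into the ideal generated by the stated relations via the ring structure; this is not purely group-theoretic and crucially uses the multiplicative structure described in the theorem.
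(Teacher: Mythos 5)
Your route is the same as the paper's: feed Theorems \ref{WeighedProjectiveBundleFormula} and \ref{Gysin homomorphism for weighted blow-up} into the cokernel presentation from Theorem \ref{sequence}, re-tag the $A^*(\tilde X)$ summand by a factor of $\pm t$ (the paper tags $j_*(h)$ as $h\cdot[\tilde X]$ with $[\tilde X]=-t$) so the Gysin relation clears its denominator, absorb the leftover projective-bundle relation $tP(t)$ into the ideal generated by the Gysin relations using $(0,\beta)(t,0)=(i^*(\beta)t,0)$ and $i^*i_*\alpha=P(0)\alpha$, and finally pass to the fiber-product form. You correctly isolate the one non-formal point --- that killing $tP(t)$ is an ideal-theoretic rather than group-theoretic statement --- and your explicit isomorphism $(\beta,h)\mapsto(\beta,h-i^*\beta)$ for the fiber-product comparison is more detailed than what the paper writes.

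One step fails as written. With the relation $r_\alpha=((P(t)-P(0))\alpha,-i_*\alpha)$ and the stated product rule, $(t,0)\cdot r_\alpha=(t(P(t)-P(0))\alpha-tP(0)\alpha,\,0)=(tP(t)\alpha-2tP(0)\alpha,\,0)$, not $(tP(t)\alpha,0)$ as you claim. The computation closes up only with the sign forced by the identification $j_*(h)\leftrightarrow h\cdot[\tilde X]=-th$, which is also the only normalization for which $(q_1,0)(q_2,0)=(q_1q_2,0)$ is compatible with $j_*(h_1)j_*(h_2)=j_*(-th_1h_2)$; your ``multiply by $+t$'' convention is the source of the trouble. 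With the consistent sign the relation reads $(-(P(t)-P(0))\alpha,-i_*\alpha)$ (equivalently $((P(t)-P(0))\alpha,+i_*\alpha)$), and then $(t,0)\cdot(-(P(t)-P(0))\alpha,-i_*\alpha)=(-tP(t)\alpha,0)$, which is exactly the computation in the paper's proof. This is not cosmetic: negating only the first coordinate of the generators changes the ideal they generate in general, so the sign has to be tracked. (The theorem's displayed relation carries the same sign slip; the proof and the fiber-product form use the consistent one.) With that correction the rest of your argument goes through.
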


\begin{proof}
The exact sequence in Theorem \ref{sequence} gives us an isomorphism of \emph{groups}
$$A^*(\tilde Y)\cong\frac{A^*(\tilde X) \oplus A^*(Y)}{((f^!(\alpha),-i_*(\alpha)), \forall\alpha\in A^*(X))}.$$
If we use Theorem \ref{WeighedProjectiveBundleFormula} to rewrite $A^*(\tilde{X})$ and also add an explicit factor of $[\tilde{X}]$ to represent how $A^*(\tilde{X})$ is mapped into $A^*(\tilde{Y})$, then as a \emph{group} we can rewrite $A^*(\tilde{Y})$ as
$$\frac{((A^*(X)[t])\cdot[\tilde{X}]) \oplus A^*(Y)}{((c_{top}^{\G_m}(\mathcal{N}_XY)(t)[\tilde{X}],0), (f^!(\alpha)[\tilde{X}],-i^*(\alpha)) \forall\alpha\in A^*(X))}.$$
Notice that $t[\tilde{X}]=-[\tilde{X}]^2$ (since $t$ is the class of $\mathcal{O}_{\tilde X}(1))$ so that $[\tilde{X}]$ can be replaced by $-t$.

Now we need to determine the ring structure. Since much of the ring structure is inherited from that of $A^*(Y)$ and $A^*(\tilde{X})$, what remains is just to determine how to multiply elements coming from $A^*(Y)$ with those coming from $A^*(\tilde{X})$. Consider the usual commutative square 
$$\begin{tikzcd}
\tilde X \arrow[r] \arrow[d] & \tilde Y \arrow[d] \\
X \arrow[r] & Y.
\end{tikzcd}$$
Intersecting some class $\beta\in A^*(Y)$ with $\tilde{X}$ amounts to pulling it back to $A^*(\tilde{X})$. By commutativity we have $g^*(i^*(\beta))=j^*(f^*(\beta))$ and by pushforward we obtain, $(0,\beta) \times (t,0)=(i^*(\beta)t,0)$.

Finally, notice also that $c_{top}^{\G_m}(\mathcal{N}_XY)(t)[\tilde{X}]$ is now redundant, as for $\alpha=1$ we have 
$$(t)(f^!(\alpha)(-t)-i_*(\alpha)) = (t)(\frac{c_{top}^{\G_m}(\mathcal{N}_XY)(t)-c_{top}(\mathcal{N}_XY)}{t}(-t)-i_*(1))$$
$$=t(-c_{top}^{\G_m}(\mathcal{N}_XY)(t)+c_{top}(\mathcal{N}_XY)-i_*(1))=(-t)\cdot c_{top}^{\G_m}(\mathcal{N}_XY)(t).$$
Where the last equality comes from $t\cdot i_*(1)=g^*(i^*(i_*(1)))=c_{top}(\mathcal{N}_XY)$.

Putting everything together, we have that the $A^*(\tilde{Y})$ is the group
$$A^*(\tilde Y)\cong\frac{(A^*(X)[t])\cdot t \oplus A^*(Y)}{( (({P(t)-P(0)})\alpha,-i_*(\alpha)), \forall\alpha\in A^*(X))}$$ with the desired multiplication.

\end{proof}

\begin{corollary}\label{Keel}
If $i^*:A^*(Y)\to A^*(X)$ is surjective, then this formula simplifies to resemble a formula of Keel \cite[Theorem 1, page 571]{Keel} 
$$A^*(\tilde{Y})\cong \frac{A^*(Y)[t]}{(t\cdot ker(i^*), Q(t))}$$
where $Q(t)=c_{top}^{\G_m}(\mathcal{N}_XY)(t)-c_{top}^{\G_m}(\mathcal{N}_XY)(0)+[X].$
\end{corollary}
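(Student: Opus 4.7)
The plan is to produce a ring homomorphism
$$\phi\colon A^*(Y)[T] \longrightarrow A^*(\tilde Y), \qquad \beta \longmapsto f^*(\beta), \qquad T \longmapsto t := -[\tilde X],$$
and identify its kernel with the ideal $(T\cdot \ker(i^*),\, Q(T))$. First I will interpret $Q(T)$ as an element of $A^*(Y)[T]$: the hypothesis that $i^*$ is surjective lets us lift $P(T)=c^{\G_m}_{top}(\mathcal{N}_XY)(T)\in A^*(X)[T]$ coefficient-by-coefficient to some $\tilde P(T)\in A^*(Y)[T]$, and set $Q(T):=\tilde P(T)-\tilde P(0)+[X]$. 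Any two lifts differ by an element of $(T\cdot\ker(i^*))$, so the quotient ring in the statement is well-defined independently of the choice.

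For surjectivity of $\phi$, I start from Theorem~\ref{main theorem 4}: an arbitrary class of $A^*(\tilde Y)$ is represented by a pair $\bigl(\sum_{k\geq 1}\alpha_k t^k,\,\beta\bigr)$ with $\alpha_k\in A^*(X)$ and $\beta\in A^*(Y)$. Lifting each $\alpha_k$ to $\tilde\alpha_k\in A^*(Y)$ and using the multiplication rule $(0,\tilde\alpha_k)(t,0)^k=(i^*(\tilde\alpha_k)t^k,0)$, this class is realized as $\phi\bigl(\beta+\sum \tilde\alpha_k T^k\bigr)$. Next, I check the two claimed relations lie in $\ker\phi$. For $\beta\in\ker(i^*)$, the projection formula gives $\phi(\beta T)=-f^*(\beta)\cdot[\tilde X]=-j_*(j^*f^*(\beta))=-j_*(g^*i^*(\beta))=0$, so $T\cdot\ker(i^*)\subseteq\ker\phi$. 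For $Q(T)$, I rewrite $\phi(\tilde P(T)-\tilde P(0))=\sum_{k\geq 1}f^*(\tilde P_k)t^k$ and apply the projection formula repeatedly using $j^*(t)=c_1(\mathcal{O}_{\tilde X}(1))$ to identify this sum as $-j_*(f^!(1))$ via the weighted key formula (Theorem~\ref{Gysin homomorphism for weighted blow-up}). Commutativity of the blow-up square then gives $j_*(f^!(1))=f^*(i_*(1))=f^*([X])$, so $\phi(Q(T))=-f^*([X])+f^*([X])=0$.

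The main obstacle is injectivity: showing that $\ker\phi$ is generated by $T\cdot\ker(i^*)$ together with $Q(T)$. The plan is to construct an inverse
$$\chi\colon \frac{A^*(X)[t]\cdot t\oplus A^*(Y)}{R}\longrightarrow\frac{A^*(Y)[T]}{(T\cdot\ker(i^*),\,Q(T))}$$
by $(0,\beta)\mapsto\bar\beta$ and $(\alpha t^k,0)\mapsto\overline{\tilde\alpha T^k}$ for any lift $\tilde\alpha\in A^*(Y)$ of $\alpha$. Independence from the choice of lift modulo $T\cdot\ker(i^*)$ is immediate. The delicate step is to verify that $\chi$ kills the relation submodule $R$ from Theorem~\ref{main theorem 4}: writing $i_*(\alpha)=\tilde\alpha\cdot[X]$ via the projection formula, the image of a generator of $R$ under $\chi$ equals a scalar multiple of $Q(T)$ after matching the sign convention $[\tilde X]=-t$ of Theorem~\ref{main theorem 4}. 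Once $\chi$ descends, the equalities $\chi\circ\phi=\mathrm{id}$ and $\phi\circ\chi=\mathrm{id}$ are straightforward to check on generators. The main care is needed in tracking the signs inherited from $[\tilde X]=-t$ consistently between the two presentations.
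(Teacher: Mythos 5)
Your proof is correct and follows essentially the same route as the paper: both start from the presentation in Theorem~\ref{main theorem 4}, use surjectivity of $i^*$ to lift classes of $A^*(X)$ to $A^*(Y)$, and thereby rewrite the relation ideal as $t\cdot \ker(i^*)$ together with $Q(t)$. You merely package the argument as a pair of explicit mutually inverse homomorphisms $\phi$ and $\chi$, and your independent verification that $\phi(Q(T))=0$ via the projection formula and $j_*f^!=f^*i_*$ is a sound (and sign-robust) way of doing what the paper does in one line.
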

\begin{proof}
Let $i^*$ be surjective. The fact any $\alpha\in A^*(X)$ can only appear multiplied by $t$ combined with the terms  $t\cdot(\beta-i^*(\beta)) \forall\beta\in A^*(Y)$ means we can identify every $\alpha\in A^*(X)$ with any $\beta$ such that $i^*(\beta)=\alpha$.
This identification allows us to suppress $A(X)$ from our presentation, and reduces the terms $t\cdot(\beta-i^*(\beta)) \forall\beta\in A^*(Y)$ to $t\cdot ker(i^*)$.

Finally, $t\cdot f^!(\alpha)+i_*(\alpha)=(c_{top}^{\G_m}(\mathcal{N}_XY)(t)-c_{top}(\mathcal{N}_XY))\alpha+i_*(\alpha)$ and $i_*(\alpha)=i_*(i^*(\beta))=[X]\cdot\beta$ for some $\beta\in A^*(Y)$.
So $t\cdot f^!+i_*$ is multiplication by $(c_{top}^{\G_m}(\mathcal{N}_XY)(t)-c_{top}(\mathcal{N}_XY)+[X])$ which is precisely the $Q(t)$ desired.
\end{proof}

\subsection{An example: the Chow ring of $\bar{\mathcal{M}}_{1,2}$}

The Chow ring $A^*(\bar{\mathcal{M}}_{1,2})$ has been computed in \cite{dilorenzo-pernice-vistoli} and in \cite{inchiostro}.
The latter uses the construction of $\bar{\mathcal{M}}_{1,2}$ as the weighted blow-up of $\mathcal{P}(2,3,4)$. We give yet another computation of the ring, using the same blow-up construction. 

We start by recalling the following:

\begin{theorem}{\cite[Theorem 2.6]{inchiostro}}\label{M12}
There exists an isomorphism $\bar{\mathcal{M}}_{1,2} \cong Bl^{(4,6)}_Z\mathcal{P}(2,3,4)$, where $Bl^{(4,6)}_Z\mathcal{P}(2,3,4)$ is the weighted blow-up of the point $Z=[s^2:s^3:0]$ in $\mathcal{P}(2,3,4)$ with weights $(4,6)$. 
\end{theorem}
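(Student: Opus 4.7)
The statement is recalled verbatim from \cite[Theorem 2.6]{inchiostro}, so a formal proof here amounts to citing that reference; the purpose of this subsection is to set up the input for computing $A^*(\bar{\mathcal{M}}_{1,2})$ via Theorem \ref{main theorem 4}. Nevertheless, here is how I would argue it from scratch.

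First I would build a morphism $\mathcal{P}(2,3,4)\dashrightarrow \bar{\mathcal{M}}_{1,2}$ by exhibiting a universal generalized-Weierstrass family of $2$-pointed genus-one curves over $\mathcal{P}(2,3,4)$. The three homogeneous coordinates of weights $2,3,4$ encode, respectively, the $x$- and $y$-coordinates of the second marked point $p_2$ and a single normal-form Weierstrass coefficient, after rigidifying the origin as $p_1$. The resulting rational map is regular on the open locus over which the universal family is already stable, and a direct local inspection shows that the unstable locus consists of precisely the stacky point $Z=[s^2:s^3:0]$, where $p_2$ collides with $p_1$ and the fiber acquires a cusp (note that $Z$ satisfies $x^3=y^2$ with $x$ and $y$ of weights $2,3$, so it is exactly the cuspidal locus of $\mathcal{P}(2,3,4)$).

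Next I would resolve the rational map by performing stable reduction at $Z$. The key computation is to identify the normal cone of $Z$ in $\mathcal{P}(2,3,4)$: locally around $Z$ the two transverse deformation parameters carry $\G_m$-weights $4$ and $6$, matching the gradings of the coefficients $a_4,a_6$ in the Weierstrass model of a cuspidal cubic deforming to a smooth elliptic curve. This identifies the correct stacky modification as the $(4,6)$-weighted blow-up $Bl^{(4,6)}_Z\mathcal{P}(2,3,4)$, and a chart-by-chart check shows that the pullback of the universal family extends to a family of stable $2$-pointed curves, with the new exceptional divisor parameterizing genus-one tails attached at the collision point.

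The principal obstacle is this second step, namely the verification that the weights $(4,6)$ are exactly right and that the blown-up family is stable along the exceptional divisor; Inchiostro carries this out via an explicit toric computation in a neighborhood of $Z$. Once that is in place, the inverse morphism is produced by exhibiting a Weierstrass model for the universal $2$-pointed stable curve over $\bar{\mathcal{M}}_{1,2}$, contracting the genus-one boundary divisor down to $Z$, and invoking the universal property of the weighted blow-up. In the present paper we use the isomorphism as a black box in order to feed it into Theorem \ref{main theorem 4}.
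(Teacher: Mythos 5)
The paper does not actually prove this statement: it is recalled verbatim from Inchiostro with a citation, exactly as your first sentence does, so at that level your proposal matches the paper. Your supplementary sketch has the right overall shape (a Weierstrass normal form identifies an open substack of $\bar{\mathcal{M}}_{1,2}$ with an open substack of $\mathcal{P}(2,3,4)$, and the discrepancy is repaired by a weighted blow-up whose weights are read off from the $\G_m$-grading of the deformation parameters), but one modular identification in it is wrong. With the parametrization you describe, the point $Z=[s^2:s^3:0]$ is the locus where the fiber degenerates to the cuspidal cubic $y^2=x^3$ with $p_1$ at infinity and $p_2=(s^2,s^3)$ a \emph{smooth} point of that fiber; the two marked points do not collide there. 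This is consistent with the paper's own later computation, which writes $Z=V(x_3)\cap V(x_1^3-x_2^2)$, i.e.\ the vanishing of $a_4$ and $a_6$ --- the cuspidal locus --- with normal directions of weights $4$ and $6$. The collision divisor $\Delta_{0,\{1,2\}}\cong\bar{\mathcal{M}}_{1,1}\cong\mathcal{P}(4,6)$ is the \emph{exceptional divisor} lying over $Z$ (it parametrizes the elliptic tail produced by stable reduction of the cusp), not the degeneration occurring at $Z$ itself. Relatedly, the claim that ``the unstable locus is precisely $Z$'' needs care: configurations such as $p_2$ landing on the node of a nodal fiber are also non-stable and must be repaired by modifying the family rather than the base, before one can conclude that the only base modification required is the blow-up at $Z$. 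None of this affects the validity of the statement as used in the paper, which rests entirely on the citation.
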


Given this, $A^*(\bar{\mathcal{M}}_{1,2})$ becomes a straightforward computation,

\begin{proposition}{\cite[Theorem 4.12]{inchiostro}}
$$A^*(\bar{\mathcal{M}}_{1,2})\cong \frac{\mathbb{Z}[y,t]}{(ty,24(t^2+y^2))}.$$
\end{proposition}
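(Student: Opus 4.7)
The strategy is to apply Corollary \ref{Keel} directly to the description $\bar{\mathcal{M}}_{1,2} \cong Bl^{(4,6)}_Z\mathcal{P}(2,3,4)$ furnished by Theorem \ref{M12}. Since $\gcd(2,3)=1$, the point $Z=[s^2:s^3:0]$ has trivial stabilizer in $\mathcal{P}(2,3,4)$, hence $A^*(Z)=\mathbb{Z}$ and the pullback $i^*:A^*(\mathcal{P}(2,3,4))\to A^*(Z)$ sending $1\mapsto 1$ is surjective, placing us squarely in the hypothesis of that corollary.

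First I would compute $A^*(\mathcal{P}(2,3,4))$ via Theorem \ref{WeighedProjectiveBundleFormula}. Viewing $\mathcal{P}(2,3,4)$ as the weighted projectivization of the trivial rank-three bundle over a point with weights $(2,3,4)$, the underlying $c_1$'s all vanish, so the $\G_m$-equivariant top Chern class reduces to $(2y)(3y)(4y)=24 y^3$, yielding $A^*(\mathcal{P}(2,3,4))\cong \mathbb{Z}[y]/(24 y^3)$ with $\ker(i^*)=(y)$.

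Next I would pin down the two geometric ingredients of Keel's formula. The $\G_m$-equivariant normal bundle $\mathcal{N}_Z\mathcal{P}(2,3,4)$ is the trivial rank-two bundle over the point $Z$ with weights $(4,6)$, so $P(t)=(4t)(6t)=24 t^2$ and $P(0)=0$. For the fundamental class $[Z]\in A^2(\mathcal{P}(2,3,4))\cong \mathbb{Z}\cdot y^2$, I would use that a reduced non-stacky point has degree $1$ while $\int_{\mathcal{P}(2,3,4)} y^2 = 1/(2\cdot 3\cdot 4) = 1/24$, giving $[Z] = 24 y^2$. Equivalently, $Z$ is cut out transversely by the weight-$4$ divisor $\{c=0\}$ and the weight-$6$ divisor $\{b^2-a^3=0\}$, so $[Z]=4y\cdot 6y = 24 y^2$.

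Substituting into Corollary \ref{Keel} with $Q(t)=P(t)-P(0)+[Z] = 24(t^2+y^2)$ and $t\cdot \ker(i^*)=(ty)$ produces
$$A^*(\bar{\mathcal{M}}_{1,2}) \cong \frac{\mathbb{Z}[y,t]}{\bigl(24 y^3,\ ty,\ 24(t^2+y^2)\bigr)}.$$
The relation $24 y^3$ is redundant in this quotient: $24 y^3 = y\cdot 24 y^2 = -24 y t^2 = -24 t\cdot(ty) = 0$, leaving the presentation asserted in the proposition. The main subtlety to watch for is the correct evaluation of $[Z]$: one must verify that $Z$ is genuinely a reduced, non-stacky point and account properly for stacky intersection numbers on $\mathcal{P}(2,3,4)$, since otherwise the factor of $24$ in the final relation is easy to miscount.
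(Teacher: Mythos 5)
Your proof is correct and follows essentially the same route as the paper: invoke Corollary \ref{Keel}, compute $P(t)=(4t)(6t)$ from the trivial weighted normal bundle, and obtain $[Z]=(4y)(6y)$ by writing $Z$ as the transverse intersection of a weight-$4$ and a weight-$6$ divisor, exactly as the paper does with $V(x_3)\cap V(x_1^3-x_2^2)$. Your explicit verification that $24y^3$ becomes redundant (via $24y^3=-24t\cdot(ty)$) is a nice touch that the paper merely asserts.
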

\begin{proof}
First, since $i:Z\to\mathcal{P}(2,3,4)$ is just the inclusion of a point we have that $i$ is surjective, and since $A^*(\mathcal{P}(2,3,4))\cong \frac{\Z[y]}{24y^3}$ we know the kernel is $(y)$.
By Corollary \ref{Keel} we then have
$$A^*(\bar{\mathcal{M}}_{1,2})\cong \frac{A^*(\mathcal{P}(2,3,4))[t]}{(ty,Q(t))}\cong \frac{\mathbb{Z}[y,t]}{(24y^3,ty,Q(t))}.$$
Where $Q(t)$ restricts to $c_{top}^{\G_m}(\mathcal{N}_ZY)$ and has constant term $[Z]$.
As $\mathcal{N}_ZY$ splits into trivial line bundles, we see $c_{top}^{\G_m}(\mathcal{N}_ZY)=(4t)(6t)$.
Writing $Z=V(x_3)\cap V(x_1^3-x_2^2)$ we see $[Z]=(4y)(6y)$, so $Q(t)=24t^2+24y^2$.
Lastly, the term $24y^3$ is now redundant and we have
$$A^*(\bar{\mathcal{M}}_{1,2})\cong \frac{\mathbb{Z}[y,t]}{(ty,24(t^2+y^2))}.$$
\end{proof}

\section{Generalization to quotient stacks}

Let us now consider the case of $\srY=[Y/G]$, where $Y$ is an algebraic space and $G$ is a linear algebraic group, hence it is possible to define the $G$-equivariant Chow ring $A^*_G(Y)$ as in \cite{EG}.

A weighted embedding of $\srX $ in $ \srY$ defines a weighted embedding of $X $ in $ Y$ via pullback and, since the quotient maps are smooth, we have $\tilde \srY \cong [\tilde Y/G]$ and $\tilde \srX \cong [\tilde X/G]$.

\begin{theorem}
    The theorems in Sections 3,5,6 hold for $f: \tilde \srY \ra \srY$ weighted blow-up of $\srY$ at $\srX$.
\end{theorem}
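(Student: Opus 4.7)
The plan is to reduce the quotient-stack case to the algebraic-space case already established, by applying the Edidin--Graham mixing construction degree by degree. For a fixed integer $i$, choose an $l$-dimensional representation $V$ of $G$ together with an open subspace $U\subset V$ on which $G$ acts freely and whose complement has codimension greater than $i$; by Definition \ref{definition of G-equivariant Chow groups}, $A^i_G(Y)=A^i(Y\times U/G)$ (and similarly for $X$, $\tilde Y$, $\tilde X$), and the spaces $Y_G:=Y\times U/G$, $X_G$, $\tilde Y_G$, $\tilde X_G$ are all quasi-separated algebraic spaces of finite type.

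The first step is to verify that the mixing construction is compatible with weighted blow-ups: because the quotient map $Y\times U\to Y_G$ is a $G$-torsor (in particular, smooth and surjective), and weighted blow-ups commute with smooth base change \cite{quek-rydh-weighted-blow-up}, $\tilde Y_G\to Y_G$ is precisely the weighted blow-up of $Y_G$ along $X_G$ with the same weights, and $\tilde X_G$ is its exceptional divisor. The same argument shows that $\mathcal{N}_{X_G}Y_G$ is the mixing of the weighted normal bundle $\mathcal{N}_XY$, and the relevant weighted affine/projective bundle structures are inherited. Thus the hypotheses of the theorems from Sections 3, 5 and 6 hold verbatim for the spaces $Y_G,X_G,\tilde Y_G,\tilde X_G$.

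The second step is to translate each statement. For Theorem \ref{WeighedProjectiveBundleFormula}, applied to $\mathcal P(\mathcal E_G)$, one obtains a presentation of $A^j(\mathcal P(\mathcal E_G))$ for every $j\le i$; the $\G_m$-equivariant Chern classes on the right-hand side commute with the mixing (since the $\G_m$-action from the weighted structure commutes with the $G$-action used to form the quotient), so this gives the presentation of $A^j_G(\mathcal P(\mathcal E))$ in the required range. For Theorem \ref{Gysin homomorphism for weighted blow-up}, the Gysin map $f^!$, the pushforwards $i_*,j_*$, and the pullback $f^*$ are all defined on the mixing spaces and agree with their equivariant counterparts by construction of equivariant operations in \cite{EG}; hence the difference-quotient formula descends. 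Similarly the key sequence (Theorem \ref{sequence}) and the description of the Chow ring (Theorem \ref{main theorem 4}) become identities in $A^j_G$ for every $j\le i$.

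Finally, since $i$ was arbitrary and the identifications are compatible with enlarging $U$ (the isomorphisms of Chow groups are induced by restriction to higher-codimension complements), taking the colimit over $i$ gives the statements for $A^*_G$. The main point to be careful about is that all the maps appearing (the splitting-principle map $X''\to X$, the deformation to the weighted normal cone, the structure maps of the classifying stacks in Section 4) can be performed $G$-equivariantly, which follows either because the construction is canonical and hence descends, or because the relevant auxiliary groups $\Gan$, $\Gln$, $T$ commute with the $G$-action. I expect the mildly delicate point to be the $G$-equivariant version of the splitting principle: one applies Lemma \ref{isomorphismofChowrings} to the mixing spaces, and uses that a $\Uan$-bundle over $Y_G$ remains a $\Uan$-bundle (since $\Uan$ and $G$ act independently), so that the injection $A^*(X)\hookrightarrow A^*(X'')$ upgrades to an injection $A^*_G(X)\hookrightarrow A^*_G(X'')$ in each fixed degree, and then the proofs in Section 5 and Section 6 carry over unchanged.
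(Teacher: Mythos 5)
Your proposal is correct and follows essentially the same route as the paper: both reduce to the algebraic-space case degree by degree via the Edidin--Graham mixing spaces $X\times U/G$, using that weighted blow-ups commute with the smooth quotient maps and that the relevant classes (e.g.\ $P(t)$) stabilize for $U$ of large enough dimension. The paper only spells this out for the weighted projective bundle formula and asserts the other cases are analogous, whereas you also flag the compatibility checks for the splitting principle and the deformation to the normal cone, but the underlying argument is the same.
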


\begin{proof}
    Let us prove that $A^*(\tilde \srX) \cong \frac{A^*(\srX)[t]}{P(t)}$ as in Theorem \ref{WeighedProjectiveBundleFormula}, the proof for the other results will be almost identical. 

For any $p$ let $U$ as in Definition \ref{definition of G-equivariant Chow groups} of dimension high enough, such that $A^q(X_U) \cong A^q_G(X)\cong A^q(\srX)$ with $X_U:= X \times U /G$, up to degree $p$. 

    Since $X_U$ is an algebraic space, by Theorem \ref{WeighedProjectiveBundleFormula} $$A^*(\tilde X_U) \cong A^*(X_U)[t]/P_U(t).$$
    Note that $P_U(t)=c_{top}^{\G_m}(\mathcal{N}_{X_U}Y_U)$ is the pullback of $P(t)=c_{top}^{\G_m}(\mathcal{N}_{\srX}\srY)$, which is a finite degree polynomial. In particular for large enough $p$, $P_U(t)$ does not depend on $U$ and it is exactly $P(t)$. 
    
    Since $\tilde X_U$ is an open inside a vector bundle, we have isomorphisms $A^q(\tilde X_U) \cong A^q(\tilde \srX)$ up to degree $p$. 
    Since \ref{WeighedProjectiveBundleFormula} holds up to degree $p$, for any $p$ we have the desired isomorphism of Chow rings.

\end{proof}

\appendix
\section{Chern Class of Weighted blow-up}
\begin{center} by Dan Abramovich, Veronica Arena, and Stephen Obinna 
\end{center}

\subsection{The goal} Consider a smooth subvariety $X$ of a smooth variety $Y$, with blowup $\tilde Y$ and exceptional divisor, as in the following standard diagram:
$$\begin{tikzcd} \tilde X \ar[r,"j"]\ar[d,"g"] & \tilde Y \ar[d,"f"] \\
                   X \ar[r,"i"] & Y.\end{tikzcd}$$

In \cite[Theorem 15.4]{fulton} Fulton provides a formula for the total Chern class $c(\tilde Y):= c(T_{\tilde Y})$ in terms of the blowup data. The purpose of this note is to revisit that formula and generalize it to the case of a weighted blowup. Since smoothness is important in these considerations, our weighted blowups are always stack-theoretic.

\subsection{Setup and formula}
In our setup, $X$ and $Y$ are still smooth varieties, and $X$ is the support of a weighted center with weighted normal bundle $N$ of rank $d=\codim(X\subset Y)$. The weighted normal bundle is a weighted affine bundle with total Chern class we denote by $c(N)\in A^*(X)$ and total $\G_m$-equivariant Chern class $c^{\G_m}(N)= Q(t)\in A^*_{\G_m}(X) = A^*(X)[t]$, where $t$ is the equivariant parameter corresponding to the standard character of $\G_m$. In particular $Q(0) = c(N)$.

We recall from Theorem \ref{main theorem 4} in the main text that $$A^*(\tilde Y) = \left(A^*(Y) \oplus t A^*(\tilde X) \right)/ I,$$ where $$I = \left(i_*(\alpha) \oplus -(Q(t)-Q(0))\alpha \big| \alpha\in A^*(X)\right).$$

We denote by $$q: A^*(Y) \oplus t A^*(X)[t]\qquad  \to \qquad \left(A^*(Y) \oplus t A^*(\tilde X)\right)/I\ \  = \ \ A^*(\tilde Y)$$ the natural quotient map.

\begin{theorem} We have 
$$ \frac{c(\tilde Y)}{ f^* c(Y)} = q\left(\frac{(1-t) Q(t)}{Q(0)}\right). $$
\end{theorem}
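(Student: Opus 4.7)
The plan is to mimic Fulton's proof of \cite[Theorem 15.4]{fulton}, with the weighted Euler sequence on $\tilde X$ replacing the classical Euler sequence. The key geometric observation is that $f$ restricts to an isomorphism on the open complement $\tilde Y \setminus \tilde X \xrightarrow{\sim} Y \setminus X$, so $T_{\tilde Y}$ and $f^*T_Y$ agree there and the difference $c(\tilde Y) - f^* c(Y)$ vanishes off $\tilde X$. By the key sequence (Theorem \ref{sequence}), this difference lies in the image of $j_*$: one can write $c(\tilde Y) = f^* c(Y) + j_*(\xi)$ for some $\xi \in A^*(\tilde X)$, determined modulo $\ker j_*$.

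To pin down $\xi$ I would apply the self-intersection formula $j^* j_* \xi = c_1(N_{\tilde X / \tilde Y})\cdot\xi = -t\xi$ and compute both sides via the standard tangent bundle exact sequences. The short exact sequence $0 \to T_{\tilde X} \to j^* T_{\tilde Y} \to N_{\tilde X/\tilde Y} \to 0$ combined with the relative tangent sequence $0 \to T_{\tilde X / X} \to T_{\tilde X} \to g^* T_X \to 0$ reduces this to computing $c(T_{\tilde X / X})$. For this I would use the weighted Euler sequence
$$0 \to \mathcal{O}_{\tilde X} \to \bigoplus_i g^* N_i \otimes \mathcal{O}(a_i) \to T_{\tilde X/X} \to 0,$$
whose total Chern class is $\prod_i c(g^* N_i \otimes \mathcal{O}(a_i)) = Q(t)$ after identifying $t = c_1(\mathcal{O}(1))$ with the equivariant parameter, as in Corollary \ref{ceqvb}. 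Combined with $c(N_{\tilde X/\tilde Y}) = 1 - t$ and, on $Y$, $i^* c(T_Y) = c(X) \cdot c(\mathcal{N}_X Y) = c(X) \cdot Q(0)$, this yields $j^* c(\tilde Y) = g^*c(X) \cdot Q(t)(1-t)$ and $j^* f^* c(Y) = g^*c(X) \cdot Q(0)$. Hence
$$-t\xi = g^* c(X) \cdot \bigl[(1-t)Q(t) - Q(0)\bigr]\quad\text{in } A^*(\tilde X),$$
and since the right-hand side has zero constant term in $t$, the division by $t$ is well defined in $A^*(X)[t]$, giving the natural representative $\xi = g^* c(X) \cdot \bigl[Q(0) - (1-t)Q(t)\bigr]/t$.

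Finally I would verify that this matches $q\bigl((1-t)Q(t)/Q(0)\bigr)$ by unpacking $q$ via the identification $[\tilde X] = -t$ from Theorem \ref{main theorem 4}. Writing $(1-t)Q(t)/Q(0) = 1 + G(t)$ with $G(t) \in t\cdot A^*(X)[t]$ (localized at $Q(0)$), the map $q$ substitutes $t \mapsto -[\tilde X]$: iterated use of the projection formula shows $G(-[\tilde X]) = -j_*\bigl(G(t)/t\bigr)$ in $A^*(\tilde Y)$. Multiplying by $f^* c(Y)$ and applying the projection formula once more with $j^* f^* c(Y) = g^* c(X) \cdot Q(0)$ turns $f^*c(Y)\cdot G(-[\tilde X])$ into precisely $j_*(\xi)$, so $f^*c(Y) \cdot q\bigl((1-t)Q(t)/Q(0)\bigr) = f^*c(Y) + j_*(\xi) = c(\tilde Y)$. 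The main obstacle will be making the substitution $t \mapsto -[\tilde X]$ rigorous: the rational expression $(1-t)Q(t)/Q(0)$ lives in a localization of $A^*(X)[t]$, while the equality is asserted in $A^*(\tilde Y)$, so careful sign and divisibility bookkeeping through the projection formula is needed to reconcile the presentation of $A^*(\tilde Y)$ as a quotient of $(A^*(X)[t])\cdot t \oplus A^*(Y)$ with the formal "evaluation at $-[\tilde X]$".
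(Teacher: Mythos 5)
Your route is genuinely different from the paper's (which proves the formula by establishing functoriality of $c(\tilde Y)/f^*c(Y)$, degenerating to the weighted normal bundle, reducing to $BT\to B\Gan$ via Theorem \ref{splitting principle}, and then doing an explicit toric log-differentials computation on $[Bl_{a_1,\dots,a_d}\AA^d/T]$), but as written it has a gap that is fatal with integral coefficients. You write $c(\tilde Y)-f^*c(Y)=j_*(\xi)$ and then determine $\xi$ by applying $j^*$, obtaining $-t\xi = g^*c(X)\bigl[(1-t)Q(t)-Q(0)\bigr]$. This does not determine $j_*(\xi)$: first, $t$ is a zero divisor in $A^*(\tilde X)\cong A^*(X)[t]/P(t)$, so "dividing by $t$" only produces one representative among many; second, and more seriously, $j^*$ is far from injective, so knowing $j^*(c(\tilde Y)-f^*c(Y))$ does not pin down $c(\tilde Y)-f^*c(Y)$. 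In Fulton's classical proof this is repaired by a uniqueness lemma: if $j_*\beta=0$ and $\beta$ has degree $\le d-2$ in $t$, then $\beta=0$. That lemma is proved by feeding $j_*\beta=0$ into the key sequence to get $\beta=f^!\alpha$ with $i_*\alpha=0$, and then using the leading coefficient of $f^!\alpha$, which here is $a_1\cdots a_d\, g^*(\alpha)\,t^{d-1}$ by Theorem \ref{Gysin homomorphism for weighted blow-up}. Integrally this only yields $a_1\cdots a_d\,\alpha=0$, not $\alpha=0$; indeed $(f^!,-i_*)$ genuinely fails to be injective for weighted blow-ups (Example \ref{non exactness}). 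So the ambiguity in $\xi$ is by torsion classes that your argument cannot exclude. This is precisely why the appendix abandons the Fulton-style computation and instead reduces to the universal toric case, where both sides are computed exactly; your argument as it stands proves the formula only after inverting $a_1\cdots a_d$ (essentially recovering the rational-coefficient result of Musta\c{t}a--Musta\c{t}a).

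A secondary issue: your weighted Euler sequence $0\to\mathcal{O}_{\tilde X}\to\bigoplus_i g^*N_i\otimes\mathcal{O}(a_i)\to T_{\tilde X/X}\to 0$ presupposes that the weighted normal bundle splits as a direct sum of homogeneous vector bundles. By Proposition \ref{Filtration2} it is only filtered with homogeneous graded pieces, and its transition functions are in general non-linear, so the middle term of your sequence does not exist as written. This is repairable -- first pull back along $X''\to X$ as in Theorem \ref{splitting principle} to linearize and split the bundle, then use injectivity of $A^*(X)\hookrightarrow A^*(X'')$ -- but that reduction needs to be made explicit, and even then the relative Euler sequence for a weighted projective stack bundle requires a proof that the paper does not supply and that you should not treat as standard. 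The final bookkeeping matching $j_*(\xi)$ with $q\bigl((1-t)Q(t)/Q(0)\bigr)$ via the projection formula is correct and is the least problematic part of the proposal.
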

We note that  the right-hand side is  of the form $q(1\oplus t\cdot R(t))$, with some $R(t) \in A^*(X)\llbracket t\rrbracket$.

The formula was proved for Chow groups with rational coefficients by Anca and Andrei Musta\c{t}a, see \cite[Proposition 2.12]{mustata-mustata}. Our proof in essence verifies that their arguments carry over integrally.

\subsection{Approach}
Our approach combines the equivariant methods used in the main text to study and compute Chow rings of weighted projective stack bundles and weighted blowups, combined with ideas in Aluffi's paper and lecture \cite{Aluffi-Chern,Aluffi-lecture}, especially the user-friendly presentation of the formula in Aluffi's lecture. While Aluffi provides a proof only for complete intersections, the methods of Theorem \ref{main theorem 4} allow us to reduce the general case to a situation where Aluffi's proof applies.

\subsubsection{The quotient class}
One first notes that the class $\frac{c(\tilde Y)}{ f^* c(Y)} $ appearing on the left-hand side has properties enabling flexible treatment:

\begin{lemma} The class $\frac{c(\tilde Y)}{ f^* c(Y)} $ is of the form  $q(1\oplus t\cdot R(t))$, with some $R(t) \in A^*(X)\llbracket t\rrbracket$, and is functorial for smooth morphisms $Y'\to Y$ and closed embeddings $Y'\to Y$ that meeet $X$ transversely.
\end{lemma}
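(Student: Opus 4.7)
The plan is to treat the two assertions in turn, relying on the explicit presentation of $A^*(\tilde Y)$ recalled just before the lemma (i.e.\ from Theorem \ref{main theorem 4}).

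First, for the shape of the class: the blowup $f$ is an isomorphism over the open complement $U = Y \setminus X = \tilde Y \setminus \tilde X$, so $c(\tilde Y)|_U = f^*c(Y)|_U$, forcing $\frac{c(\tilde Y)}{f^*c(Y)}$ to restrict to $1$ on $U$. Any class in $A^*(\tilde Y)$ can be written as $q(a, b)$ with $a \in tA^*(X)[t]$ and $b \in A^*(Y)$; since the first summand comes from classes supported on $\tilde X$, restriction to $U$ takes $q(a,b)$ to $b|_U$ (recall $f$ is an isomorphism over $U$, so $f^*$ restricts to ordinary restriction). Thus if $q(a_0, b_0)$ is any representative of our class, then $b_0|_U = 1$, and the localization sequence for the pair $(Y, X)$ gives $b_0 = 1 + i_*(\alpha)$ for some $\alpha \in A^*(X)$. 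Applying the defining relation $((P(t)-P(0))\alpha, -i_*(\alpha)) \in I$ trades this $i_*(\alpha)$ in the second slot for $(P(t)-P(0))\alpha$ in the first; since $P(t)-P(0)$ is divisible by $t$, this new entry lies in $tA^*(X)[t]$, so we obtain a representative of the form $q(1 \oplus t R(t))$ as desired.

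For functoriality, consider $g: Y' \to Y$ either smooth or a transverse closed embedding. In both cases the base change $\tilde Y' := Y' \times_Y \tilde Y$ is the weighted blowup of $Y'$ along $X' := X \times_Y Y'$: smooth base change commutes with blowups, and transversality ensures the same in the embedding case. Write $f': \tilde Y' \to Y'$ and $\tilde g: \tilde Y' \to \tilde Y$. When $g$ is smooth, the relative tangent sequence $0 \to T_g \to T_{Y'} \to g^*T_Y \to 0$ gives $c(T_{Y'}) = c(T_g)\cdot g^*c(Y)$, the analogous sequence on the blowup gives $c(T_{\tilde Y'}) = c(T_{\tilde g})\cdot \tilde g^*c(\tilde Y)$, and smooth base change yields $T_{\tilde g} = f'^*T_g$; the $c(T_g)$ factors cancel in the ratio, producing $\frac{c(\tilde Y')}{f'^*c(Y')} = \tilde g^* \frac{c(\tilde Y)}{f^*c(Y)}$. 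The transverse embedding case is parallel, replacing the relative tangent sheaves with normal bundles and using that $N_{\tilde Y'/\tilde Y} = f'^*N_{Y'/Y}$ by transversality, after which the normal-bundle Chern classes cancel in the same way.

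The main subtlety is bookkeeping with the presentation: one must verify that restriction to $U$ really does annihilate the $tA^*(X)[t]$ summand and act as ordinary restriction on the $A^*(Y)$ summand, and that the sign of the relation used in the normalization is consistent with the convention fixed in Theorem \ref{main theorem 4}. Once these points are settled, both parts are routine, and I do not expect any further genuine obstacle.
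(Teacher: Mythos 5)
Your proposal is correct and follows essentially the same route as the paper: the localization sequence forces $\frac{c(\tilde Y)}{f^*c(Y)}-1$ to be supported on $\tilde X$ (hence of the form $q(0\oplus tR(t))$), and functoriality is obtained by rewriting the identity as $\frac{c(\tilde Y')}{\tilde g^*c(\tilde Y)}=f'^*\frac{c(Y')}{g^*c(Y)}$ and using compatibility of the relative tangent bundle (smooth case) or normal bundle (transverse embedding case) with pullback. Your extra bookkeeping with the presentation and the relation $((P(t)-P(0))\alpha,-i_*(\alpha))$ is a more explicit version of the same normalization the paper performs implicitly.
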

\begin{proof}
    To see that it has this form, consider the localization sequence:
    $$A^*(\tilde{X})\to A^*(\tilde{Y})\to A^*(\tilde{Y}\smallsetminus\tilde{X})\to0$$
    Since $c(\tilde Y)$ and $f^* c(Y)$ must pull back to the same class on $A^*(\tilde{Y}\smallsetminus\tilde{X})$, and their ratio pulls back to one. In particular we see that $\frac{c(\tilde Y)}{f^*c(Y)}-1$ must be in the image of $A^*(\tilde{X})$, which means $\frac{c(\tilde Y)}{f^*c(Y)}$ is of the desired form. 

    To see functoriality, consider the diagram 
    $$\begin{tikzcd}
        \tilde{Y'} \arrow[r,"\tilde{h}"]\arrow[d,"f'"] & \tilde{Y} \arrow[d,"f"]\\
        Y' \arrow[r,"h"] & Y
    \end{tikzcd}$$

    We must show $\frac{c(\tilde{Y'})}{f'^*c(Y')}=\tilde{h}^*\frac{c(\tilde{Y})}{f^*c(Y)}$, but this is equivalent to $\frac{c(\tilde{Y'})}{\tilde{h}^*c(\tilde{Y})}=f'^*\frac{c(Y')}{h^*c(Y)}$. This is true when $h$ is smooth because the relative tangent bundle of $h$ is compatible with pullback under $f$, and true when $h$ is a closed embedding since the normal bundle of $h$ is compatible with pullback under $f$.
\end{proof}

\subsubsection{Degeneration to the weighted normal bundle}
Applying the lemma to the degeneration to the weighted normal bundle we obtain
\begin{lemma} It suffices to prove the theorem, namely to compute $R(t)$ and $\frac{c(\tilde Y)}{ f^* c(Y)} $, when $Y = \mathcal{N}_XY$.
\end{lemma}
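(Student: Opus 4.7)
The plan is to combine the deformation to the weighted normal cone with the functoriality of the quotient class established in the previous lemma. First I would recall the deformation space $M^o \to \mathbb{A}^1$ from \cite[Section 4.3]{quek-rydh-weighted-blow-up} (invoked also in Section 5.2): $M^o$ is a smooth algebraic space containing a smooth weighted center $X \times \mathbb{A}^1$; projection to $\mathbb{A}^1$ realizes on a general fiber the original embedding $X \hookrightarrow Y$, and on the special fiber the zero-section embedding $X \hookrightarrow \mathcal{N}_XY$. The weighted blow-up $\tilde{M^o} \to M^o$ restricts on these two fibers to $\tilde Y \to Y$ and to $\widetilde{\mathcal{N}_XY} \to \mathcal{N}_XY$ respectively.

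Next I would observe that both fiber inclusions $\iota_1\colon Y \hookrightarrow M^o$ and $\iota_0\colon \mathcal{N}_XY \hookrightarrow M^o$ are closed embeddings cut out by a single equation pulled back from $\mathbb{A}^1$, and each meets the center $X\times\mathbb{A}^1$ transversely (the intersection is $X$ with the correct codimension). The functoriality of the previous lemma therefore yields
\[
\tilde\iota_1^*\,\frac{c(\tilde{M^o})}{f_M^*\, c(M^o)} \;=\; \frac{c(\tilde Y)}{f^*\, c(Y)}, \qquad
\tilde\iota_0^*\,\frac{c(\tilde{M^o})}{f_M^*\, c(M^o)} \;=\; \frac{c(\widetilde{\mathcal{N}_XY})}{f_N^*\, c(\mathcal{N}_XY)},
\]
where $\tilde\iota_0, \tilde\iota_1$ denote the induced maps on blow-ups.

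By the previous lemma, each of these classes has the form $q(1 \oplus t R(t))$. Writing the quotient class on $\tilde{M^o}$ as $q(1\oplus t R_M(t))$ with $R_M(t) \in A^*(X\times\mathbb{A}^1)\llbracket t\rrbracket$, the two fiber pullbacks return the same series $R(t) \in A^*(X)\llbracket t\rrbracket$ because pullback along either section inverts the homotopy invariance isomorphism $A^*(X)\cong A^*(X\times \mathbb{A}^1)$. On the other side of the desired formula, the weighted normal bundle of $X$ inside its own normal bundle $\mathcal{N}_XY$ is canonically $N$ itself, so $Q(t)$ and $Q(0)$ are unchanged when we replace $Y$ by $\mathcal{N}_XY$. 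Hence the identity $R(t) = \frac{(1-t)Q(t)}{Q(0)}$ holds for $Y$ if and only if it holds for $\mathcal{N}_XY$, which is the reduction claimed.

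The only delicate point is to confirm that the weighted blow-up is compatible with the transverse base changes $\iota_0,\iota_1$, so that the previous lemma's functoriality actually applies to these fiber inclusions. This compatibility is built into the Quek--Rydh construction and was already used implicitly in the proof of Theorem \ref{specialization}, so I expect no real obstacle; the content of the reduction is really the combination of functoriality with the intrinsic (universal) dependence of both sides of the formula on the data $(X, N)$ alone.
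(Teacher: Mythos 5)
Your proposal is correct and follows essentially the same route as the paper: degenerate to the weighted normal cone via the Quek--Rydh deformation space $M^o$, apply the functoriality from the previous lemma to the fiber inclusions (which are closed embeddings meeting the center $X\times\AA^1$ transversely), and conclude that the resulting class in $A^*(X)\llbracket t\rrbracket$ is independent of the fiber. Your explicit appeal to homotopy invariance $A^*(X)\cong A^*(X\times\AA^1)$ is a slightly more detailed justification of the paper's remark that neither $\tilde X$ nor $\tilde{M^o}$ depends on $t$, but the argument is the same.
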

\begin{proof}
    Recall the diagram from Theorem \ref{specialization}
    
    $$\begin{tikzcd}[sep = .4cm]
&\tilde{X}\times\A^1 \arrow[rr]\arrow[ddrr, "\textstyle Z" description] && Z\times \A^1 \arrow[rr]\arrow[dd]&& \tilde{M^o} \arrow[dd,"f_M"]\\
\tilde{X} \arrow[rr]\arrow[ddrr]\arrow[ur] && \contour{white}{$Z$} \arrow[rr]\arrow[dd]\arrow[ru] && \tilde{M_t} \arrow[ru] \\
&&& X \times\A^1 \arrow[rr]&& M^o\\
&& X \arrow[rr]\arrow[ru] && M_t \arrow[ru]\arrow[from=uu, crossing over]
\end{tikzcd}$$

Where $M_t\cong Y$ for $t\neq0$ and $M_0=\mathcal{N}_XY$.

By the previous lemma, the expression $\frac{c(\tilde M_t)}{ f^* c(M_t)} $ can be pulled back from $\tilde{M}^o$ along the embedding corresponding to $t$ and is determined by a class on $\tilde{X}$. However, neither $\tilde{X}$ nor $\tilde{M}^o$ depend on $t$ so it is enough to compute things when $t=0$, that is for $\mathcal{N}_XY$.

\end{proof}

\subsubsection{The universal case} 

 By Theorem \ref{splitting principle}, the homomorphism $A^*(B\Gan) \to A^*(BT)$ is injective. Therefore: 
\begin{lemma}
    It suffices to prove the theorem when $X = BT$ and $Y = [V/T]$. Equivalently, it suffices to prove the theorem $T$-equivariantly when $X$ is a point, the origin on $Y = \AA^d$
\end{lemma}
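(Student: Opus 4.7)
The plan is to apply Theorem \ref{splitting principle} to reduce a general weighted normal bundle $N \to X$ to the universal weighted vector bundle $[V/T] \to BT$, and then to invoke functoriality of the ingredients of the formula under flat base change.

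By the preceding lemma we may assume $Y = N := \mathcal{N}_X Y$ is a weighted affine bundle over $X$. Proposition \ref{PropositionDefinition} provides a classifying map $X \to B\Gan$ exhibiting $N$ as the pullback of $V\Gan$. Setting $X' := X \times_{B\Gan} B\Gln$ and $X'' := X' \times_{B\Gln} BT$, we obtain the tower of Theorem \ref{splitting principle}: $A^*(X) \to A^*(X')$ is an isomorphism by Lemma \ref{isomorphismofChowrings}, and $A^*(X') \hookrightarrow A^*(X'')$ by the classical splitting principle. Moreover $X'' \to X$ is smooth (a unipotent torsor followed by a flag bundle), so weighted blow-ups commute with this base change, and with $N'' := N \times_X X''$ the bundle $N'' \to X''$ is, by construction, the pullback of $[V/T] \to BT$.

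I would then check that both sides of the formula are compatible with pullback along $\tilde{N''} \to \tilde N$: the left-hand side by the functoriality lemma established earlier for smooth morphisms; the right-hand side because $Q(t) = c^{\G_m}(N)$ pulls back to $c^{\G_m}(N'')$ and the quotient map $q$ of Theorem \ref{main theorem 4} is compatible with base change through its explicit presentation. Thus establishing the formula for the universal pair $[V/T] \to BT$ implies it after pullback to $\tilde{N''}$.

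The main obstacle is upgrading the injection $A^*(X) \hookrightarrow A^*(X'')$ to an injection $A^*(\tilde N) \hookrightarrow A^*(\tilde{N''})$ on the relevant classes, so that the verified identity on $\tilde{N''}$ descends to $\tilde N$. The presentation of Theorem \ref{main theorem 4} writes $A^*(\tilde N)$ as a quotient of $A^*(X)[t] \oplus A^*(N)$; since $N \to X$ is an affine bundle, $A^*(N) \cong A^*(X)$, and analogous identifications hold for $\tilde{N''}$. Base-change compatibility of these presentations then transfers the injectivity from $X$ to $\tilde N$, completing the reduction.
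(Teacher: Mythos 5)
Your proposal is correct and uses the same two ingredients as the paper --- the functoriality lemma for smooth morphisms and the splitting principle of Theorem \ref{splitting principle} --- so it is essentially the paper's argument. The only difference is the order of operations: the paper applies injectivity at the classifying-stack level ($A^*(B\Gan)\hookrightarrow A^*(BT)$) and then pulls the universal identity back along the smooth classifying map $X\to B\Gan$, whereas you pull back along $X''\to BT$ first and then descend along $A^*(X)\hookrightarrow A^*(X'')$, which forces you to confront (as you note) the transfer of injectivity to the blown-up Chow rings at the level of $X$ rather than at the level of $B\Gan$.
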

\begin{proof}
    Follows from functoriality and Theorem \ref{splitting principle} since the maps $X\to B\Gan$ and $BT\to B\Gan$ are smooth.
\end{proof}

\subsubsection{The toric case of affine space}
Finally, let $X$ be the origin of $Y = \AA^d$.
Let $A^*_T(0) \cong A^*_T(\mathbb A^d) \cong \mathbb Z [x_1, \dots x_d]$ and $A^*_T(\mathcal{P}(a_1,\dots a_d)) \cong A^*_T(Bl_{(a_1, \dots a_d)} \mathbb A^d) \cong \mathbb Z [x_1, \dots x_d, t]/(\prod (x_i+a_it)) $ as in Section \ref{The special case}.

\begin{proposition}
    We  have $ c^T(Y) = Q(0) $ and $ c^T(\tilde Y) = q((1-t) Q(t)) $.
\end{proposition}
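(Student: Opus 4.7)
The plan is to identify the weighted normal bundle $N = \mathcal{N}_X Y = T_0 Y$ as a $T \times \mathbb{G}_m$-representation: it is $\mathbb{A}^d$ with $T$-characters $x_1,\dots,x_d$ on the coordinate tangent directions and $\mathbb{G}_m$-weights $a_1,\dots,a_d$. Thus $N$ splits as $\bigoplus_i L_i$, where $L_i$ has $T$-weight $x_i$ and $\mathbb{G}_m$-weight $a_i$, and Lemma~\ref{eclb} applied termwise gives
\[
Q(t) = \prod_{i=1}^d(1 + x_i + a_i t), \qquad Q(0) = \prod_{i=1}^d(1 + x_i).
\]
The identity $c^T(Y) = Q(0)$ is then immediate, since $T_Y \cong \bigoplus L_{x_i}$ as $T$-equivariant line bundles yields $c^T(Y) = \prod(1 + x_i) = Q(0)$.

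For $c^T(\tilde Y) = q((1-t)Q(t))$, I would use the toric description. As a smooth toric Deligne--Mumford stack with dense torus $T$, $\tilde Y$ is obtained by the stacky star subdivision of $\mathrm{cone}(e_1,\dots,e_d)$ along the ray $v_0 = \sum_i a_i e_i$. Its toric divisors are the proper transforms $\tilde D_1,\dots,\tilde D_d$ of the coordinate hyperplanes (one for each $e_i$) and the exceptional $\tilde X$ (corresponding to $v_0$). The equivariant divisor classes are pinned down by $[\tilde X]^T = -t$ together with the pullback relations $f^{*}[D_i]^T = [\tilde D_i]^T + a_i [\tilde X]^T$, i.e.\ $x_i = \tilde D_i^T - a_i t$, so $\tilde D_i^T = x_i + a_i t$. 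Applying the formula $c^T(T_{X_\Sigma}) = \prod_v(1 + D_v^T)$ for the equivariant total Chern class of the tangent sheaf of a smooth toric DM stack gives
\[
c^T(\tilde Y) \;=\; (1-t)\prod_{i=1}^d(1+x_i+a_i t) \;=\; (1-t)\,Q(t).
\]
Since in this toric case $A^*_T(\tilde Y) = \mathbb{Z}[x_1,\dots,x_d,t]/(\prod(x_i + a_i t))$ and $q$ amounts to decomposing a polynomial into its constant-in-$t$ term (in $A^*(Y)$) and its $t$-positive term (in $t A^*(X)[t]$), via $(1-t)Q(t) = Q(0) + \bigl((1-t)Q(t) - Q(0)\bigr)$, the right-hand side is exactly $q((1-t)Q(t))$.

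The main point requiring justification is the Chern class formula $c^T(T_{X_\Sigma}) = \prod_v(1 + D_v^T)$ for smooth toric DM stacks that may fail to be complete. The cleanest argument is via the logarithmic tangent sequence
\[
0 \to \Omega^1_{X_\Sigma} \to \Omega^1_{X_\Sigma}(\log D) \to \bigoplus_v \mathcal{O}_{D_v} \to 0,
\]
together with the $T$-equivariant trivialization $\Omega^1_{X_\Sigma}(\log D) \cong M \otimes \mathcal{O}_{X_\Sigma}$ by the invariant sections $d\log \chi^m$ (they are $T$-invariant because $\chi^m$ and $d\chi^m$ carry the same weight $m$, which cancels in the ratio). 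A self-contained alternative avoiding toric stack machinery is to view $\tilde Y = \mathrm{Tot}(\mathcal{O}_{\mathcal{P}(a)}(-1)) \to \mathcal{P}(a)$ and combine the relative tangent sequence $0 \to \pi^* \mathcal{O}(-1) \to T_{\tilde Y} \to \pi^* T_{\mathcal{P}(a)} \to 0$ with an equivariant Euler-type sequence on $\mathcal{P}(a)$; both routes reach the same conclusion once the equivariant characters are tracked.
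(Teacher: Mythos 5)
Your proposal is correct and follows essentially the same route as the paper: both arguments reduce to the logarithmic cotangent sequence $0 \to \Omega^1_{\tilde Y} \to \Omega^1_{\tilde Y}(\log D) \to \bigoplus_v \mathcal{O}_{D_v} \to 0$ with $\Omega^1_{\tilde Y}(\log D)$ equivariantly trivial, then apply Whitney's formula and dualize, with the equivariant divisor classes $[\tilde X]^T=-t$ and $\tilde D_i^T = x_i + a_i t$. Your explicit derivation of $\tilde D_i^T$ from the pullback relation $f^*[D_i]^T=[\tilde D_i]^T + a_i[\tilde X]^T$ is a slightly more detailed bookkeeping of the same computation.
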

\begin{proof}

This is essentially the same argument as \cite[Theorem 4.2]{Aluffi-Chern-singular}.

Let $D=\Sigma \tilde X_i +\tilde{X}$ be the sum of all the irreducible toric divisors on $\tilde Y$. By repeating the argument in \cite[Proposition p.87]{Fulton-toric}, we have the exact sequence $$0 \ra \Omega_{\tilde Y}^1 \ra \Omega^1_{\tilde Y}(log D) \ra \left(\bigoplus_{i=1}^d \mathcal{O}_{X_i} \right) \oplus \mathcal{O}_{\tilde X} \ra 0$$
and $\Omega^1_{\tilde Y}(log D)$ is trivial. By Whitney's formula, $$c^T(\Omega^1_{\tilde Y}) = \frac{1}{c^T(\mathcal{O}_{\tilde X})c^T(\oplus \mathcal{O}_{\tilde X_i})}=(1+t) \prod_i (1-a_it-x_i).$$
By taking the dual we obtain $$c^T(T \tilde Y)=(1-t) \prod_i (1+a_it+x_i)=Q(t)$$

The same argument works to prove $c^T(Y)=Q(0)$.
\end{proof}

The theorem follows.

\bibliographystyle{amsalpha}
\bibliography{references}

\providecommand{\bysame}{\leavevmode\hbox to3em{\hrulefill}\thinspace}
\providecommand{\MR}{\relax\ifhmode\unskip\space\fi MR }
\providecommand{\MRhref}[2]{%
  \href{http://www.ams.org/mathscinet-getitem?mr=#1}{#2}
}
\providecommand{\href}[2]{#2}
\begin{thebibliography}{DLPV21}

\bibitem[Alu06]{Aluffi-Chern-singular}
Paolo Aluffi, \emph{Classes de {C}hern des vari\'{e}t\'{e}s singuli\`{e}res,
  revisit\'{e}es}, C. R. Math. Acad. Sci. Paris \textbf{342} (2006), no.~6,
  405--410. \MR{2209219}

\bibitem[Alu10]{Aluffi-Chern}
\bysame, \emph{Chern classes of blow-ups}, Math. Proc. Cambridge Philos. Soc.
  \textbf{148} (2010), no.~2, 227--242. \MR{2600139}

\bibitem[Alu11]{Aluffi-lecture}
\bysame, \emph{Chern classes of blowups}, 2011.

\bibitem[DLPV21]{dilorenzo-pernice-vistoli}
Andrea Di~Lorenzo, Michele Pernice, and Angelo Vistoli, \emph{Stable cuspidal
  curves and the integral chow ring of {$\overline{\mathcal{M}}_{2,1}$}}, 2021.

\bibitem[EG98]{EG}
Dan Edidin and William Graham, \emph{Equivariant intersection theory}, Invent.
  Math. \textbf{131} (1998), no.~3, 595--634. \MR{1614555}

\bibitem[EH16]{eisenbud_harris}
David Eisenbud and Joe Harris, \emph{3264 and all that: A second course in
  algebraic geometry}, Cambridge University Press, 2016.

\bibitem[Ful93]{Fulton-toric}
William Fulton, \emph{Introduction to toric varieties}, Annals of Mathematics
  Studies, vol. 131, Princeton University Press, Princeton, NJ, 1993, The
  William H. Roever Lectures in Geometry. \MR{1234037}

\bibitem[Ful98]{fulton}
\bysame, \emph{Intersection theory}, second ed., Ergebnisse der Mathematik und
  ihrer Grenzgebiete. 3. Folge. A Series of Modern Surveys in Mathematics
  [Results in Mathematics and Related Areas. 3rd Series. A Series of Modern
  Surveys in Mathematics], vol.~2, Springer-Verlag, Berlin, 1998. \MR{1644323}

\bibitem[Inc22]{inchiostro}
Giovanni Inchiostro, \emph{Moduli of genus one curves with two marked points as
  a weighted blow-up}, Mathematische Zeitschrift \textbf{302} (2022), no.~3,
  1905–1925.

\bibitem[Iwa07]{Iwanari}
Isamu Iwanari, \emph{Integral chow rings of toric stacks}, International
  Mathematics Research Notices \textbf{2009} (2007), 4709--4725.

\bibitem[Kee92]{Keel}
S.~Keel, \emph{Intersection theory of moduli space of stable n-pointed curves
  of genus zero}, Transactions of the American Mathematical Society
  \textbf{330} (1992), 545--574.

\bibitem[MM12]{mustata-mustata}
Anca~M. Mustaţă and Andrei Mustaţǎ, \emph{The structure of a local
  embedding and chern classes of weighted blow-ups}, Journal of the European
  Mathematical Society \textbf{14} (2012), 1739--1794.

\bibitem[MRV06]{MV}
Luis~Alberto Molina~Rojas and Angelo Vistoli, \emph{On the {C}how rings of
  classifying spaces for classical groups}, Rend. Sem. Mat. Univ. Padova
  \textbf{116} (2006), 271--298. \MR{2287351}

\bibitem[QR21]{quek-rydh-weighted-blow-up}
Ming~Hao Quek and David Rydh, \emph{Weighted blow-ups}, In preparation, draft
  available at \url{https://people.kth.se/~dary/weighted-blowups20220329.pdf},
  2021.

\bibitem[{Sta}]{stacks-project}
The {Stacks Project Authors}, \emph{{S}tacks {P}roject},
  \url{http://stacks.math.columbia.edu}.

\bibitem[Tot14]{totaro}
Burt Totaro, \emph{Group cohomology and algebraic cycles}, Cambridge Tracts in
  Mathematics, Cambridge University Press, 2014.

\bibitem[Vis89]{vistoli}
Angelo Vistoli, \emph{Intersection theory on algebraic stacks and on their
  moduli spaces}, Inventiones mathematicae \textbf{97} (1989), 613--670.

\end{thebibliography}

\end{document}